\documentclass[11pt,a4paper]{article}
\usepackage[T1]{fontenc}
\usepackage[utf8]{inputenc}
\usepackage[english]{babel}
\usepackage{lmodern,microtype}
\usepackage{geometry}
\usepackage{amsmath,amssymb,amsthm,mathtools}
\usepackage{enumitem}
\usepackage{xcolor}
\usepackage{hyperref}
\usepackage[nameinlink,noabbrev]{cleveref}
\hypersetup{colorlinks=true,linkcolor=blue!55!black,citecolor=blue!55!black,urlcolor=blue!55!black}

\newtheorem{theorem}{Theorem}[section]
\newtheorem{proposition}[theorem]{Proposition}
\newtheorem{lemma}[theorem]{Lemma}
\newtheorem{corollary}[theorem]{Corollary}
\theoremstyle{definition}
\newtheorem{definition}[theorem]{Definition}

\theoremstyle{remark}
\newtheorem{remark}[theorem]{Remark}

\newcommand{\R}{\mathbb R}
\newcommand{\mug}{\mu_g}
\newcommand{\Dg}{D_g}
\newcommand{\Sg}{S_g}
\newcommand{\dg}{\delta_g}
\newcommand{\ip}[2]{\left\langle #1,#2\right\rangle}
\newcommand{\norm}[1]{\left\lVert #1\right\rVert}
\newcommand{\Pgminus}{P_g^-}
\newcommand{\Pgplus}{P_g^+}
\newcommand{\Wplus}{\mathbb W_g^+(V,V^*)}
\newcommand{\Ag}{\mathcal A_g}

\title{\textbf{Banach-Valued Stieltjes Calculus and Linear Parabolic Evolution Equations with Posterior States}}
\author{Francisco J. Fern\'andez\\[2pt]\small Departamento de Estat\'istica, An\'alise Matem\'atica e Optimizaci\'on\\\small Universidade de Santiago de Compostela, Spain}
\date{}

\begin{document}
\maketitle

\begin{abstract}
We develop a Banach-valued lateral Stieltjes calculus adapted to linear evolution equations on a Gelfand triple
\[
 V\hookrightarrow H\simeq H^*\hookrightarrow V^*.
\]
The evolution problem is formulated with the posterior state,
\[
 \Dg u(t)+A(t)\Sg u(t)=f(t),\qquad u(a)=u_0.
\]
This choice is decisive at atoms of the Stieltjes measure: if $\delta=\Delta g(r)>0$, the equation yields the implicit variational resolvent
\[
 (I+\delta A(r))u(r^+)=u(r)+\delta f(r).
\]
We prove a Banach-valued lateral product formula, a weak--strong reconstruction theorem, and a Stieltjes analogue of the Lions--Magenes energy identity. In the separable Hilbert setting considered here, the stable projection chain used in the proof is shown to exist automatically by a spectral-band construction. The latter contains the exact positive jump contribution
\[
 \frac12\sum_r\norm{u(r^+)-u(r)}_H^2.
\]
Existence is constructed independently by Galerkin approximation and by a Rothe scheme adapted to the Stieltjes mass and its atoms. The Rothe construction does not use spatial projections. We also explain why the available Aubin--Lions theorem for anterior Stieltjes--Bochner evolution graphs is not automatically applicable to the posterior energy space and, in the present linear coercive setting, is not needed. A heat equation with Stieltjes time is included as a model application. The claims are positioned relative to earlier Stieltjes parabolic equations, general measure-driven evolution, and operator-valued-measure parabolic systems; no priority is claimed for measure-driven parabolic theory or Galerkin approximation in general.
\end{abstract}

\noindent\textbf{Keywords:} Stieltjes derivative; Banach-valued Stieltjes calculus; Gelfand triple; parabolic evolution equation; posterior state; Lions--Magenes identity; Galerkin method; Rothe method.

\section{Introduction}
Stieltjes differential calculus provides a convenient language for evolutions driven by a nondecreasing left-continuous clock $g$. The associated Lebesgue--Stieltjes measure may contain absolutely continuous, singular continuous, and atomic components. Consequently, the same time variable can encode ordinary continuous evolution, inactive intervals, singular clocks, and instantaneous events. For PDEs, however, the choice of the state sampled by the spatial operator at an atom is structural: an anterior convention and a posterior convention generate different jump laws.

There is substantial prior work on measure-driven evolution that must be distinguished from the specific framework developed here. Cicho\'n and Satco study measure differential inclusions for arbitrary finite Borel measures and obtain regulated or bounded-variation solutions encompassing continuous, discrete, impulsive, and Zeno-type dynamics \cite{CichonSatco2014}. Satco treats nonlinear measure-driven integral equations with regulated solutions in the Kurzweil--Stieltjes setting \cite{Satco2014}, while Cao and Sun study abstract semilinear measure-driven equations in Banach spaces by Lebesgue--Stieltjes integration and $C_0$-semigroup methods \cite{CaoSun2015}. More directly on the PDE side, Ahmed's operator-valued-measure program includes parabolic systems on Gelfand triples, weak solutions, Galerkin constructions, and regularity theory; see, in particular, Section~3 of \cite{Ahmed2007DCDIS} and the broader operator-valued-measure framework in \cite{Ahmed2007NA}. Thus the present paper does \emph{not} claim the first variational, infinite-dimensional, or Galerkin theory for measure-driven parabolic equations. The distinction is narrower: the scalar Stieltjes clock is combined with a lateral \emph{posterior-state} variational formulation, and the resulting atomic algebra is built into the energy theory and both approximation schemes.

Within the Stieltjes PDE literature closest to the present notation, Banach-valued Stieltjes--Bochner spaces and their use in parabolic equations were developed by Fern\'andez and Tojo \cite{FernandezTojo2020}. In arXiv:2001.11183v2, Definition~3.1 in Section~3, equations~(68)--(69), formulates the solution with the spatial terms evaluated at the anterior state $u(s)$. Section~4 then uses the classical diagonalization method; Theorem~4.1 gives the scalar Stieltjes equation used mode by mode and Theorem~4.2 proves existence and uniqueness under hypotheses (H1)--(H5), with the spectral reduction displayed in Part~1, equations~(78)--(81). A later application to a two-dimensional population model again uses a diagonalizable spatial differential operator and Stieltjes time to represent hatching impulses and hibernation intervals \cite{AreaFernandezNietoTojo2025}. These works establish that Stieltjes parabolic PDEs themselves are not new. The present paper instead keeps the left-continuous anterior representative but places the spatial operator at the posterior trace $\Sg u$. At an atom this produces a coercive implicit resolvent rather than an anterior multiplicative update.

The lateral structure used below is inherited from the scalar theory in \cite{FernandezLateral2026}. In arXiv:2609.10659v1, Theorem~3.3 in Section~3.3 gives the exact lateral integration-by-parts formula, Theorem~4.6 in Section~4.4 gives the scalar weak--strong reconstruction theorem, and Lemma~6.4 together with Theorem~6.5 in Section~6.2 provides the atom-compatible ordered-simplex estimate and finite-dimensional global existence theorem. We prove the Banach-valued duality formula and Banach-valued reconstruction directly, rather than treating these scalar statements as vector-valued theorems. The Stieltjes--Bochner fundamental theorem and Aubin--Lions graph compactness theory are developed in \cite{FernandezAubinLions2026}. In arXiv:2609.10064v1, the RNP-dependent fundamental theorem is Theorem~3.4, the evolution graph is Definition~4.1, the main compactness theorem is Theorem~4.7, and the Gelfand-triple specialization is Corollary~5.4. These results are used here to delimit, rather than to close, the existence proof: the anterior graph controls $u$, whereas the natural posterior energy estimate controls $\Sg u$.

A concrete modelling motivation for posterior implicit Stieltjes stepping appears in Fern\'andez and Area \cite[Section 4, Remark 4.2, equation (4.6)]{FernandezArea2026}, where the post-event algebra has the same resolvent form. That work explicitly leaves continuous well-posedness of the control-to-state map outside its scope, so none of its numerical or optimization results is imported here. More generally, energy--dissipation identities with nontrivial jump contributions are well established in other discontinuous evolution theories, for example in the Balanced-Viscosity theory of rate-independent systems \cite{MielkeRossiSavare2016}. Accordingly, our novelty claim is not that jumps can contribute to an energy balance in general. What is specific here is the exact positive quadratic correction generated by the lateral product formula for a posterior Stieltjes evolution on a Gelfand triple.

The mathematical contributions are therefore deliberately stated in a narrow form. First, we organize anterior and posterior Bochner primitives and prove the Banach-valued lateral product and reconstruction statements needed by the evolution problem. Second, we show that every separable Hilbert Gelfand triple considered here admits a stable projection chain, and use such a chain to establish a Stieltjes--Lions--Magenes identity whose atomic part is the exact term
\[
 \frac12\sum_{r}\norm{u(r^+)-u(r)}_H^2.
\]
Third, for coercive time-dependent bilinear forms we prove the posterior jump law
\[
 (I+\Delta g(r)A(r))u(r^+)=u(r)+\Delta g(r)f(r)
\]
in variational form and construct solutions independently by Galerkin approximation and by a Stieltjes-adapted Rothe scheme. The Rothe existence argument requires no stable spatial projection. Fourth, we give an explicit atomic counterexample showing that posterior $V$-control does not imply the anterior $V$-control required by the available Stieltjes--Aubin--Lions graph theorem. The results are thus best read as a posterior-state variational theory, not as a replacement for the broader measure-driven, operator-valued-measure, semigroup, or anterior Stieltjes theories cited above.

The paper is organized as follows. Sections~\ref{sec:basic}--\ref{sec:reconstruction} develop the Banach-valued lateral calculus. Sections~\ref{sec:gelfand}--\ref{sec:energy} contain the variational energy theory. Sections~\ref{sec:galerkin} and \ref{sec:rothe} give the two existence constructions. Section~\ref{sec:aubin} discusses the Aubin--Lions compatibility issue, and Section~\ref{sec:heat} treats the heat equation.

\section{Stieltjes measure, posterior trace, and Bochner primitives}\label{sec:basic}
Let $I=[a,b]$ and let $g:I\to\R$ be nondecreasing and left-continuous. Its finite Lebesgue--Stieltjes measure is normalized by
\[
 \mug([s,t))=g(t)-g(s),\qquad a\le s\le t\le b.
\]
Set
\[
 M_g:=\mug([a,b)),\qquad
 \Ag:=\{r\in[a,b):\Delta g(r)>0\},\qquad
 \dg(r):=\mug(\{r\})=\Delta g(r).
\]
The symbol $\Ag$ denotes the atomic set, whereas $\Dg u$ denotes the Stieltjes derivative of a function. The atomic set is at most countable and $\sum_{r\in \Ag}\dg(r)\le M_g$. We extend $\dg$ by zero away from the atomic set. Throughout, the underlying measure space is $([a,b),\mug)$; whenever an integral is written over $I=[a,b]$, we extend $\mug$ by zero on the endpoint $\{b\}$.

Let $X$ be a Banach space. We write $L_g^p(I;X)=L^p([a,b),\mug;X)$ for the corresponding Bochner space.

\begin{definition}[Bochner--Stieltjes absolute continuity]\label{def:ACB}
A function $U:I\to X$ belongs to $AC_g^{\rm B}(I;X)$ if there exist $x_a\in X$ and $v\in L_g^1(I;X)$ such that
\begin{equation}\label{eq:primitive}
 U(t)=x_a+\int_{[a,t)}v(s)\,d\mug(s),\qquad t\in I.
\end{equation}
The density $v$, uniquely determined $\mug$-almost everywhere, is denoted by $\Dg U$.
\end{definition}

\begin{lemma}[Uniqueness of the Bochner--Stieltjes density]\label{lem:densityunique}
If $v,w\in L_g^1(I;X)$ have the same primitive, then $v=w$ $\mug$-almost everywhere.
\end{lemma}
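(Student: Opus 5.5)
Setting $h:=v-w\in L_g^1(I;X)$, the hypothesis says that
\[
 \int_{[a,t)}h\,d\mug=0\qquad\text{for every } t\in I,
\]
and we must show $h=0$ $\mug$-a.e. The plan is to reduce to the scalar case by testing with functionals, and then to use uniqueness of measures determined on intervals.

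First I reduce to scalars. Since $h$ is Bochner integrable, it is strongly measurable. Hence it is $\mug$-a.e. separably valued: there is a $\mug$-null set $N_0$ and a closed separable subspace $X_0\subset X$ with $h(s)\in X_0$ for $s\notin N_0$. By Hahn--Banach, choose a sequence $(x_n^*)\subset X^*$ of norm-one functionals that norms $X_0$, that is, $\norm{x}=\sup_n|\ip{x_n^*}{x}|$ for all $x\in X_0$. Bochner integrals commute with bounded linear functionals, so for each $n$ the scalar function $h_n:=\ip{x_n^*}{h}\in L_g^1(I)$ satisfies $\int_{[a,t)}h_n\,d\mug=0$ for all $t\in I$. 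It is enough to split into real and imaginary parts if $X$ is complex, so assume $h_n$ is real.

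Next I show each $h_n$ vanishes a.e., which is the main step. Consider the finite signed measure $\nu_n(E):=\int_E h_n\,d\mug$ on the Borel sets of $[a,b)$. Differencing the hypothesis gives $\nu_n([s,t))=0$ for all $a\le s\le t\le b$. The half-open intervals $[s,t)\subset[a,b)$, together with $\emptyset$, form a $\pi$-system generating the Borel $\sigma$-algebra of $[a,b)$. Write $\nu_n=\nu_n^+-\nu_n^-$ with $\nu_n^\pm(E)=\int_E h_n^\pm\,d\mug$. These are finite positive measures that agree on the $\pi$-system and have equal total mass, since $\nu_n([a,b))=0$. By Dynkin's $\pi$--$\lambda$ theorem they coincide on all Borel sets. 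Thus $\int_E h_n\,d\mug=0$ for every Borel $E$.

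Taking $E=\{h_n>0\}$ and then $E=\{h_n<0\}$ yields $h_n=0$ $\mug$-a.e. The only delicate point is measurability with respect to the completed $\mug$-$\sigma$-algebra. I handle it by replacing $h_n$ with a Borel representative, which changes nothing on a null set.

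Finally I conclude. Let $N_n$ be the null set off which $h_n=0$, and set $N:=N_0\cup\bigcup_n N_n$. This is a countable union of null sets, hence null. For $s\notin N$ we have $h(s)\in X_0$ and $\ip{x_n^*}{h(s)}=0$ for all $n$, so
\[
 \norm{h(s)}=\sup_n|\ip{x_n^*}{h(s)}|=0 .
\]
Hence $v=w$ $\mug$-almost everywhere. Note that atoms need no separate treatment: they are covered automatically, since $\nu_n(\{r\})=\lim_{t\downarrow r}\nu_n([r,t))=0$ is already contained in the interval identity.
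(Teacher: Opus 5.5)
Your proof is correct and follows the paper's route. You scalarize with functionals, extend the vanishing of each signed measure from half-open intervals to all Borel sets (you via Dynkin's $\pi$--$\lambda$ theorem, the paper via the generating half-open semiring), and then use essential separability together with a countable Hahn--Banach norming family to get one common null set. The only differences are cosmetic: you fix the countable family before the scalar step rather than after, and you add explicit care about Borel representatives and atoms.
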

\begin{proof}
Let $z=v-w$. The integral of $z$ vanishes on every half-open interval. For each $x^*\in X^*$, the signed measure $E\mapsto\int_E x^*(z)\,d\mug$ therefore vanishes on the generating half-open semiring and hence on all Borel sets. Thus $x^*(z)=0$ $\mug$-almost everywhere for each fixed $x^*$. Strong measurability makes $z$ essentially separably valued; a countable norming family on the closed separable span of an essential range, extended by Hahn--Banach, produces one common null set outside which $z=0$.
\end{proof}

The half-open interval in \eqref{eq:primitive} fixes the anterior representative. At an atom $r$,
\[
 U(r^+)-U(r)=\dg(r)\Dg U(r).
\]
This suggests the posterior trace
\begin{equation}\label{eq:Sg}
 \Sg U:=U+\dg\Dg U.
\end{equation}
Thus $\Sg U(r)=U(r^+)$ at every atom and $\Sg U=U$ outside the atoms, up to $\mug$-null sets.

\begin{lemma}[Anterior and posterior primitive operators]\label{lem:Ppm}
For $1\le p\le\infty$, define
\[
 (\Pgminus w)(t):=\int_{[a,t)}w\,d\mug,
 \qquad \Pgplus w:=\Pgminus w+\dg w.
\]
For $t<b$, $(\Pgplus w)(t)=\int_{[a,t]}w\,d\mug$.
Then $\Pgminus,\Pgplus:L_g^p(I;X)\to L_g^p(I;X)$ are bounded linear operators. In particular, for $p=2$,
\[
 \norm{\Pgminus w}_{L_g^2(X)}\le M_g\norm{w}_{L_g^2(X)},\qquad
 \norm{\Pgplus w}_{L_g^2(X)}\le 2M_g\norm{w}_{L_g^2(X)},
\]
and
\[
 \norm{\Pgminus w}_{L^\infty(I;X)}\le M_g^{1/2}\norm{w}_{L_g^2(X)}.
\]
\end{lemma}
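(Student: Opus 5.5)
The plan is to bound $\Pgminus w$ pointwise first and then integrate, and to treat the atomic term $\dg w$ separately. Fix $w\in L_g^p(I;X)$. Since $\mug$ is finite, H\"older's inequality gives $w\in L_g^1(I;X)$ with $\norm{w}_{L_g^1(X)}\le M_g^{1-1/p}\norm{w}_{L_g^p(X)}$. Hence the Bochner integral $\int_{[a,t)}w\,d\mug$ exists for every $t\in I$, and
\[
 \norm{(\Pgminus w)(t)}_X\le \int_{[a,t)}\norm{w}_X\,d\mug\le M_g^{1-1/p}\norm{w}_{L_g^p(X)} .
\]
For $p=2$ this is exactly $\norm{\Pgminus w}_{L^\infty(I;X)}\le M_g^{1/2}\norm{w}_{L_g^2(X)}$. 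Integrating the pointwise bound over a space of total mass $M_g$ gives $\norm{\Pgminus w}_{L_g^p(X)}\le M_g^{1/p}\cdot M_g^{1-1/p}\norm{w}_{L_g^p(X)}=M_g\norm{w}_{L_g^p(X)}$. In particular this yields the stated constant $M_g$ for $p=2$, and the case $p=\infty$ is the same with $1/p=0$.

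Before these norm computations mean anything, I must check that $\Pgminus w$ is strongly $\mug$-measurable; I expect this to be the main (if mild) obstacle. I would argue that $t\mapsto \Pgminus w(t)$ is left-continuous and has right limits. Both follow from dominated convergence applied to $\mathbf 1_{[a,t_n)}w$: for $t_n\uparrow t$ the sets increase to $[a,t)$, and for $t_n\downarrow t$ they decrease to $[a,t]$. So $\Pgminus w$ is a regulated function, hence a uniform limit of step functions, hence strongly Borel measurable and separably valued. Well-definedness on equivalence classes is clear, because changing $w$ on a $\mug$-null set does not change any of the integrals.

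For $\Pgplus$, the identity is immediate: for $t<b$ we have $\int_{[a,t]}w\,d\mug=\int_{[a,t)}w\,d\mug+\mug(\{t\})w(t)$ and $\mug(\{t\})=\dg(t)$. The term $\dg w$ needs its own check on equivalence classes. It is well defined because every atom has positive mass, so two representatives of $w$ agree at each atom. It is also strongly measurable, since $\dg$ is a Borel function supported on the countable set $\Ag$. Finally, $0\le\dg\le \sum_{r\in\Ag}\dg(r)\le M_g$ gives $\norm{\dg w}_{L_g^p(X)}\le M_g\norm{w}_{L_g^p(X)}$. The triangle inequality then yields $\norm{\Pgplus w}_{L_g^p(X)}\le 2M_g\norm{w}_{L_g^p(X)}$, which is the stated $p=2$ bound. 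Linearity of both operators is clear from linearity of the Bochner integral.
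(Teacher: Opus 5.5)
Your proposal is correct and follows the paper's proof. Both use the H\"older pointwise bound $\norm{(\Pgminus w)(t)}_X\le M_g^{1-1/p}\norm{w}_{L_g^p(X)}$, integrate it over total mass $M_g$, bound the atomic term by $0\le\dg\le M_g$, and conclude with the triangle inequality; your checks of strong measurability (via the regulated structure of $\Pgminus w$) and of well-definedness of $\dg w$ on equivalence classes fill in details the paper leaves implicit.
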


\begin{proof}
For every $t$,
\[
 \norm{(\Pgminus w)(t)}_X
 \le \int_{[a,t)}\norm{w(s)}_X\,d\mug(s)
 \le M_g^{1/2}\norm{w}_{L_g^2(X)}.
\]
Integrating its square in $t$ gives the first $L^2$ estimate. Since $0\le\dg\le M_g$,
\[
 \norm{\dg w}_{L_g^2(X)}^2
 =\int_{[a,b)} \dg(t)^2\norm{w(t)}_X^2\,d\mug(t)
 \le M_g^2\norm{w}_{L_g^2(X)}^2.
\]
The second estimate follows from the triangle inequality. More generally, for $1\le p<\infty$, H\"older's inequality gives
\[
 \|(\Pgminus w)(t)\|_X\le M_g^{1-1/p}\|w\|_{L_g^p(X)},
\]
whence $\|\Pgminus w\|_{L_g^p(X)}\le M_g\|w\|_{L_g^p(X)}$. Since $0\le\dg\le M_g$, also $\|\dg w\|_{L_g^p(X)}\le M_g\|w\|_{L_g^p(X)}$, and therefore $\|\Pgplus w\|_{L_g^p(X)}\le2M_g\|w\|_{L_g^p(X)}$. The case $p=\infty$ follows directly from the defining integral.
\end{proof}

\begin{proposition}[Basic properties of the primitive]\label{prop:primitive}
If $U\in AC_g^{\rm B}(I;X)$, then $U$ is bounded, left-continuous in $X$, and has a right limit at every $t<b$. Moreover, for each $x^*\in X^*$,
\[
 \Dg\ip{x^*}{U}=\ip{x^*}{\Dg U},\qquad
 \Sg\ip{x^*}{U}=\ip{x^*}{\Sg U}.
\]
\end{proposition}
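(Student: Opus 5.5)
The plan is to work directly from the representation \eqref{eq:primitive}, $U(t)=x_a+\int_{[a,t)}v\,d\mug$ with $v=\Dg U\in L_g^1(I;X)$, and to reduce every claim to dominated convergence for Bochner integrals. The countable additivity of the finite measure $\mug$ does the rest.

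\emph{Boundedness.} This is immediate:
\[
\norm{U(t)}_X\le\norm{x_a}_X+\int_{[a,b)}\norm{v}_X\,d\mug .
\]

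\emph{Left-continuity.} Fix $t\in(a,b]$ and take $s_n\uparrow t$ with $s_n<t$. Then
\[
U(t)-U(s_n)=\int_{[s_n,t)}v\,d\mug ,
\]
so
\[
\norm{U(t)-U(s_n)}_X\le\int_{[s_n,t)}\norm{v}_X\,d\mug .
\]
The intervals $[s_n,t)$ decrease to the empty set. Since $\norm{v}_X\in L^1(\mug)$, continuity from above of the finite measure $E\mapsto\int_E\norm{v}_X\,d\mug$ gives convergence to $0$. Because the bound is monotone in $s$, it suffices to check sequences.

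\emph{Right limits.} For $t<b$, I would show that $U(s)\to x_a+\int_{[a,t]}v\,d\mug$ as $s\downarrow t$. The difference from $U(s)$ is $\int_{(t,s)}v\,d\mug$, and $(t,s)$ decreases to the empty set as $s\downarrow t$. The same estimate therefore gives
\[
U(t^+)=x_a+\int_{[a,t]}v\,d\mug=U(t)+\dg(t)v(t).
\]
This is consistent with the atomic identity stated before \eqref{eq:Sg}, and also with Lemma~\ref{lem:Ppm}, since $(\Pgplus v)(t)=\int_{[a,t]}v\,d\mug$.

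\emph{Scalar identities.} Fix $x^*\in X^*$. Bochner integrals commute with bounded functionals, so
\[
\ip{x^*}{U(t)}=\ip{x^*}{x_a}+\int_{[a,t)}\ip{x^*}{v}\,d\mug ,
\]
with $\ip{x^*}{v}\in L_g^1(I;\R)$ because $\abs{\ip{x^*}{v}}\le\norm{x^*}\norm{v}_X$. Hence $\ip{x^*}{U}\in AC_g^{\rm B}(I;\R)$ with density $\ip{x^*}{v}$. By Lemma~\ref{lem:densityunique} with $X=\R$, the density is unique $\mug$-a.e., so $\Dg\ip{x^*}{U}=\ip{x^*}{\Dg U}$ $\mug$-a.e. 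The posterior identity then follows by linearity from \eqref{eq:Sg}:
\[
\Sg\ip{x^*}{U}=\ip{x^*}{U}+\dg\ip{x^*}{\Dg U}=\ip{x^*}{U+\dg\Dg U}=\ip{x^*}{\Sg U}.
\]

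\emph{Main obstacle.} The only step needing care is the right limit at a point $t$ that is an accumulation point of atoms. There the argument must rest on countable additivity over $(t,s)$ rather than on any finite decomposition; dominated convergence handles this uniformly. One should also record that all these identities hold as equalities in $L_g^1$, that is, $\mug$-a.e., since $\Dg U$ is only an equivalence class. At atoms, however, they hold pointwise, because atoms have positive mass.
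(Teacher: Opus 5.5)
Your proposal is correct and follows essentially the same route as the paper. Boundedness comes straight from the primitive formula. Left continuity and the right limit $U(t^+)=U(t)+\dg(t)\Dg U(t)$ both follow from continuity from above, over $[s_n,t)$ and $(t,s)$ respectively. The scalar identities hold because $x^*$ commutes with the Bochner integral. Your norm-measure estimate and your explicit use of Lemma~\ref{lem:densityunique} with $X=\R$ just spell out steps the paper states in one line.
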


\begin{proof}
The bound follows immediately from \eqref{eq:primitive}. If $t_n\uparrow t$, then
\[
 U(t)-U(t_n)=\int_{[t_n,t)}\Dg U\,d\mug\to0
\]
by absolute continuity of the Bochner integral. If $t_n\downarrow t$, $t_n>t$, continuity from above gives
\[
 U(t_n)-U(t)\to\int_{\{t\}}\Dg U\,d\mug=\dg(t)\Dg U(t).
\]
Thus the right limit exists and agrees with \eqref{eq:Sg} at atoms. The scalarization formulas follow because continuous linear functionals commute with Bochner integration.
\end{proof}

\begin{remark}[Radon--Nikod\'ym property]
Definition~\ref{def:ACB} does not require the Radon--Nikod\'ym property (RNP). The RNP enters only if one wishes to start from a purely metric notion of $g$-absolute continuity and derive a Bochner density. In arXiv:2609.10064v1, the corresponding metric-to-Bochner characterization under the RNP is Theorem~3.4 in Section~3, pp.~30--36 of the public arXiv PDF \cite{FernandezAubinLions2026}.
\end{remark}

\section{A Banach-valued lateral product formula}\label{sec:product}
\begin{theorem}[Lateral product rule in Banach duality]\label{thm:product}
Let $X$ be a Banach space, $U\in AC_g^{\rm B}(I;X)$, and $\Phi\in AC_g^{\rm B}(I;X^*)$. Then $q(t)=\ip{\Phi(t)}{U(t)}$ belongs to $AC_g(I)$ and
\begin{align}
 \Dg q&=\ip{\Dg\Phi}{U}+\ip{\Sg\Phi}{\Dg U},\label{eq:prod1}\\
 &=\ip{\Dg\Phi}{\Sg U}+\ip{\Phi}{\Dg U}\label{eq:prod2}
\end{align}
$\mug$-almost everywhere. Consequently, for $a\le s\le t\le b$,
\begin{equation}\label{eq:ibp}
 \ip{\Phi(t)}{U(t)}-\ip{\Phi(s)}{U(s)}
 =\int_{[s,t)}\ip{\Dg\Phi(r)}{U(r)}\,d\mug(r)
 +\int_{[s,t)}\ip{\Sg\Phi(r)}{\Dg U(r)}\,d\mug(r).
\end{equation}
\end{theorem}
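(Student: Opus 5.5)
The plan is to prove \eqref{eq:ibp} directly by a Fubini argument on the product measure $\mug\otimes\mug$. The a.e.\ formulas \eqref{eq:prod1}--\eqref{eq:prod2} will then be read off via Lemma~\ref{lem:densityunique}, applied in the scalar case $X=\R$. Write $U(t)=U(a)+\int_{[a,t)}u\,d\mug$ and $\Phi(t)=\Phi(a)+\int_{[a,t)}\varphi\,d\mug$, with $u=\Dg U\in L_g^1(I;X)$ and $\varphi=\Dg\Phi\in L_g^1(I;X^*)$. Fix $s\le t$ and expand
\[
\ip{\Phi(t)}{U(t)}-\ip{\Phi(s)}{U(s)}=\ip{\Phi(t)-\Phi(s)}{U(s)}+\ip{\Phi(s)}{U(t)-U(s)}+\ip{\Phi(t)-\Phi(s)}{U(t)-U(s)}.
\]
Since continuous linear functionals commute with Bochner integration, the first two terms equal
\[
\int_{[s,t)}\ip{\varphi(r)}{U(s)}\,d\mug(r),\qquad \int_{[s,t)}\ip{\Phi(s)}{u(r)}\,d\mug(r).
\]
The cross term equals
\[
\iint_{[s,t)^2}\ip{\varphi(\rho)}{u(r)}\,d\mug(\rho)\,d\mug(r).
\]
This form uses that $(\rho,r)\mapsto\ip{\varphi(\rho)}{u(r)}$ is $\mug\otimes\mug$-measurable and bounded by $\norm{\varphi(\rho)}\norm{u(r)}$, which is integrable. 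Measurability holds because both $\varphi$ and $u$ are strongly measurable, hence a.e.\ limits of simple functions, so the pairing is an a.e.\ limit of simple functions on the product.

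Split the square $[s,t)^2$ into the regions $\{\rho<r\}$ and $\{\rho\ge r\}$. The diagonal $\rho=r$ is kept on the second side, and this is exactly what produces the posterior trace. On $\{\rho<r\}$, Fubini gives
\[
\int_{[s,t)}\ip{\Phi(r)-\Phi(s)}{u(r)}\,d\mug(r).
\]
On $\{\rho\ge r\}$, integrating first in $r\in[s,\rho]$ gives
\[
\int_{[s,t)}\ip{\varphi(\rho)}{U(\rho)-U(s)+\dg(\rho)u(\rho)}\,d\mug(\rho).
\]
Here I use $\int_{[s,\rho]}u\,d\mug=U(\rho)-U(s)+\dg(\rho)u(\rho)$, i.e.\ the identity $\Pgplus=\Pgminus+\dg$ from Lemma~\ref{lem:Ppm}. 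Adding the four pieces, the $U(s)$ and $\Phi(s)$ terms cancel, and the result is
\[
\int_{[s,t)}\ip{\varphi}{\Sg U}\,d\mug+\int_{[s,t)}\ip{\Phi}{u}\,d\mug.
\]
This is the integrated form of \eqref{eq:prod2}.

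If instead the diagonal is assigned to $\{\rho<r\}$ (i.e.\ the split is $\rho\le r$ versus $\rho>r$), the same computation yields $\ip{\Sg\Phi}{u}$ and $\ip{\varphi}{U}$. That is \eqref{eq:ibp} and the integrated form of \eqref{eq:prod1}. Alternatively, the two expressions agree pointwise, since both reduce to $\ip{\varphi}{U}+\ip{\Phi}{u}+\dg\ip{\varphi}{u}$.

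The integrands are in $L_g^1$ because $U$ and $\Phi$ are bounded (Proposition~\ref{prop:primitive}) and $\dg\le M_g$. Taking $s=a$ then shows that $q$ is a scalar Stieltjes primitive, so $q\in AC_g(I)$. Lemma~\ref{lem:densityunique} identifies $\Dg q$ $\mug$-a.e.\ with either integrand.

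I expect the main obstacle to be the justification of Fubini for the vector pairing, namely the joint measurability of $\ip{\varphi(\rho)}{u(r)}$ with no RNP or separability of $X^*$ assumed. My plan is to reduce to essentially separably valued functions: both $u$ and $\varphi$ are strongly measurable, and approximating them by countably-valued functions makes the pairing product-measurable. The careful bookkeeping of the diagonal set $\{\rho=r\}$, which carries mass $\sum_r\dg(r)^2$, is the other point where errors could creep in. That bookkeeping is what must be checked to get the lateral terms right.
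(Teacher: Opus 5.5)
Your proposal is correct and is essentially the paper's proof: the same Fubini computation on $[s,t)^2$ with the diagonal placed in the non-strict triangle, run from the left-hand side of \eqref{eq:ibp} instead of by substituting the primitives into its right-hand side; your remark on product measurability is a detail the paper omits. One small repair: the integrability of $\dg\ip{\Dg\Phi}{\Dg U}$ does not follow from $\dg\le M_g$. It follows from the bound $\dg(\rho)\norm{\Dg U(\rho)}_X\le\norm{\Dg U}_{L_g^1(X)}$, i.e.\ from boundedness of $\Sg U$, which your Fubini step already provides.
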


\begin{proof}
Write $u=\Dg U$ and $\phi=\Dg\Phi$. On $[s,t)$,
\[
 U(r)=U(s)+\int_{[s,r)}u(q)\,d\mug(q),\qquad
 \Sg\Phi(r)=\Phi(s)+\int_{[s,r]}\phi(p)\,d\mug(p).
\]
Substitution into the right-hand side of \eqref{eq:ibp} yields two linear terms and two double integrals. The latter are absolutely integrable since
\[
 \iint\left|\ip{\phi(p)}{u(q)}\right|\,d\mug(p)d\mug(q)
 \le \norm{\phi}_{L_g^1(X^*)}\norm{u}_{L_g^1(X)}.
\]
Fubini's theorem therefore applies. The first double integral occupies the region $q<p$; the second occupies $p\le q$. These regions are disjoint and cover $[s,t)^2$. The diagonal is contained in the second region, which is essential when $\mug$ has atoms because then the product measure of the diagonal need not vanish. Hence the two double integrals add to
\[
 \ip{\int_{[s,t)}\phi\,d\mug}{\int_{[s,t)}u\,d\mug}.
\]
Combining this with the two linear terms gives
\[
 \ip{\Phi(t)-\Phi(s)}{U(s)}+\ip{\Phi(s)}{U(t)-U(s)}
 +\ip{\Phi(t)-\Phi(s)}{U(t)-U(s)},
\]
which is exactly $\ip{\Phi(t)}{U(t)}-\ip{\Phi(s)}{U(s)}$. This proves \eqref{eq:ibp}, and the scalar Stieltjes fundamental theorem gives \eqref{eq:prod1}. Finally,
\[
 \Sg U-U=\dg\Dg U,\qquad \Sg\Phi-\Phi=\dg\Dg\Phi,
\]
so the difference between the two right-hand sides in \eqref{eq:prod1} and \eqref{eq:prod2} is zero.
\end{proof}

The scalar predecessor is Theorem 3.3 of arXiv:2609.10659v1, Section 3.3 \cite{FernandezLateral2026}. The proof above is genuinely Banach-valued and makes the atomic diagonal explicit through Fubini's theorem.

\section{Weak--strong reconstruction in a Banach space}\label{sec:reconstruction}
Let
\[
 \mathcal T_g:=\left\{\varphi(t)=c+\int_{[a,t)}\psi(s)\,d\mug(s):c\in\R,\ \psi\in L_g^\infty(I)\right\}.
\]
Then $\Dg\varphi=\psi$ and $\Sg\varphi=\varphi+\dg\psi$.

\begin{theorem}[Banach-valued weak--strong reconstruction]\label{thm:reconstruction}
Let $X$ be a Banach space, $u,v\in L_g^1(I;X)$, and $u_a,u_b\in X$. The following are equivalent:
\begin{enumerate}[label=(\roman*)]
\item for every $\varphi\in\mathcal T_g$,
\begin{equation}\label{eq:weakX}
 \int_{[a,b)}u\,\Dg\varphi\,d\mug+
 \int_{[a,b)}v\,\Sg\varphi\,d\mug
 =u_b\varphi(b)-u_a\varphi(a)
\end{equation}
as an identity in $X$;
\item the function
\[
 U(t):=u_a+\int_{[a,t)}v\,d\mug
\]
satisfies $U=u$ $\mug$-almost everywhere and $U(b)=u_b$;
\item there exists a unique $U\in AC_g^{\rm B}(I;X)$ such that
\[
 U=u\quad\mug\text{-a.e.},\qquad U(a)=u_a,\quad U(b)=u_b,
 \qquad \Dg U=v\quad\mug\text{-a.e.}
\]
\end{enumerate}
No reflexivity or RNP assumption is required.
\end{theorem}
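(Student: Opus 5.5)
The plan is to prove (ii)$\Rightarrow$(iii)$\Rightarrow$(i)$\Rightarrow$(ii).

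\emph{(ii)$\Rightarrow$(iii).} This step is essentially definitional. The function $U$ in (ii) belongs to $AC_g^{\rm B}(I;X)$ by Definition~\ref{def:ACB}, with $x_a=u_a$. By construction $U(a)=u_a$, and (ii) supplies $U=u$ $\mug$-a.e.\ and $U(b)=u_b$. We also get $\Dg U=v$ from Lemma~\ref{lem:densityunique}. For uniqueness, note that any $\tilde U\in AC_g^{\rm B}$ with $\tilde U(a)=u_a$ and $\Dg\tilde U=v$ has the same primitive representation \eqref{eq:primitive}, hence $\tilde U=U$ pointwise on $I$.

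\emph{(iii)$\Rightarrow$(i).} Fix $\varphi\in\mathcal T_g$ and apply the integration-by-parts formula \eqref{eq:ibp} of Theorem~\ref{thm:product} on $[a,b)$. Take the Banach space to be $X^*$, with $\Phi:=U$ viewed in $X\hookrightarrow X^{**}$? That pairing is awkward. A cleaner route is to scalarize. For each $x^*\in X^*$, the scalar function $\ip{x^*}{U}$ lies in $AC_g$ with density $\ip{x^*}{v}$ by Proposition~\ref{prop:primitive}. Now use Theorem~\ref{thm:product} with $X=\R$, $U\leftarrow\ip{x^*}{U}$ and $\Phi\leftarrow\varphi$; the form \eqref{eq:ibp} has $\Sg$ on the factor $\Phi$, which is exactly what \eqref{eq:weakX} needs. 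This gives
\[
\ip{x^*}{u_b}\varphi(b)-\ip{x^*}{u_a}\varphi(a)=\int\ip{x^*}{U}\Dg\varphi\,d\mug+\int\Sg\varphi\,\ip{x^*}{v}\,d\mug .
\]
Replace $U$ by $u$ a.e., and commute $x^*$ with the Bochner integrals. Both integrands are Bochner integrable because $\Dg\varphi,\Sg\varphi\in L^\infty_g$. Since $x^*$ is arbitrary, Hahn--Banach yields \eqref{eq:weakX} as an identity in $X$.

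\emph{(i)$\Rightarrow$(ii).} This is the substantive direction, and I will test \eqref{eq:weakX} against carefully chosen $\varphi$.

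First take $\varphi\equiv1$, so that $\Dg\varphi=0$ and $\Sg\varphi=1$. This gives $\int_{[a,b)}v\,d\mug=u_b-u_a$, i.e.\ $U(b)=u_b$.

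Next, fix $\tau\in(a,b]$ and take $\psi=-\chi_{[a,\tau)}$ with $c=1$, so that $\varphi(t)=1-\mug([a,\min(t,\tau)))$. This choice is wrong in general, since it does not isolate $u$. The better strategy is to choose $\psi=\chi_E$ for an arbitrary interval $E=[\alpha,\beta)$ and $c=0$. Then $\varphi=\Pgminus\chi_E$ and $\Sg\varphi=\Pgplus\chi_E$, with $\varphi(a)=0$ and $\varphi(b)=\mug(E)$. Fubini rewrites
\[
\int v\,\Sg\varphi\,d\mug=\int_{[a,b)}v(s)\,\mug\bigl(E\cap[a,s]\bigr)\,d\mug(s)=\int_E\Bigl(\int_{[p,b)}v\,d\mug\Bigr)d\mug(p),
\]
where the region $p\le s$ includes the diagonal, exactly as in the proof of Theorem~\ref{thm:product}. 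Using $\int_{[p,b)}v=u_b-U(p)$, identity \eqref{eq:weakX} becomes
\[
\int_E u\,d\mug+\mug(E)u_b-\int_E U\,d\mug=\mug(E)u_b,
\]
that is, $\int_E(u-U)\,d\mug=0$ for every half-open interval $E$. Lemma~\ref{lem:densityunique}, applied to $u-U$ with zero primitive, then gives $u=U$ $\mug$-a.e.

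\emph{Main obstacle.} The delicate point is the Fubini bookkeeping with atoms: placing the diagonal in the $p\le s$ region so that $\Sg\varphi$, not $\varphi$, appears. A second check is that $U\in L^1_g$ (it is bounded), so that the manipulation is legitimate. No reflexivity or RNP is used, since only Bochner integrals of given $L^1_g$ functions occur.
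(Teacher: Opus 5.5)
Your proof is correct. It follows the same cycle (i)$\Rightarrow$(ii)$\Rightarrow$(iii)$\Rightarrow$(i) as the paper, and your (ii)$\Rightarrow$(iii) and (iii)$\Rightarrow$(i) are essentially the paper's arguments, with the scalarization made explicit. The difference lies in (i)$\Rightarrow$(ii).

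\textbf{The paper's route.} It first applies Theorem~\ref{thm:product} to $U$ and $\Phi=\varphi x^*$. This gives the companion identity $\int U\,\Dg\varphi\,d\mug+\int v\,\Sg\varphi\,d\mug=U(b)\varphi(b)-u_a\varphi(a)$ for every $\varphi\in\mathcal T_g$. Subtracting it from (i) yields $\int(u-U)\psi\,d\mug=0$ for all $\psi\in L_g^\infty(I)$. The paper then concludes with the test $\psi=\operatorname{sgn}(x^*(u-U))$ and a separability/Hahn--Banach step.

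\textbf{Your route.} You test only with $\psi=\chi_E$ for half-open intervals $E$. You redo the Fubini computation by hand for that one test, which is the product formula in disguise, with the atomic diagonal correctly placed in the region $p\le s$ so that $\Sg\varphi(s)=\mug(E\cap[a,s])$ appears. You then finish with Lemma~\ref{lem:densityunique}, which already packages the semiring-to-Borel extension and the separability argument.

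\textbf{What each buys.} Your version shows that (i), restricted to $\varphi\equiv1$ and the primitives of interval indicators, already forces (ii). It also does not need Theorem~\ref{thm:product} in this direction. The paper's version is shorter once the product formula is available, and it yields the stronger intermediate statement for all bounded $\psi$ at once.

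\textbf{Two small points.}
\begin{itemize}
\item Boundedness of $U$ alone does not make it Bochner integrable; you also need strong measurability. This holds because $U$ is regulated by Proposition~\ref{prop:primitive}, hence a uniform limit of step functions.
\item In a written version, delete the abandoned false starts: the awkward $X^{**}$ pairing and the first choice $\psi=-\chi_{[a,\tau)}$.
\end{itemize}
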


\begin{proof}
\emph{(i)$\Rightarrow$(ii).} Define $U(t)=u_a+\int_{[a,t)}v\,d\mug$. Then $U\in AC_g^{\rm B}(I;X)$, $\Dg U=v$ $\mug$-a.e., and $U(a)=u_a$. Fix $x^*\in X^*$ and $\varphi\in\mathcal T_g$, and put $\Phi=\varphi x^*$. Theorem~\ref{thm:product} gives
\[
 \int_{[a,b)}x^*(U)\Dg\varphi\,d\mug+
 \int_{[a,b)}x^*(v)\Sg\varphi\,d\mug
 =x^*(U(b))\varphi(b)-x^*(u_a)\varphi(a).
\]
Separation by $X^*$ gives the corresponding identity in $X$. Taking $\varphi\equiv1$ in (i) yields $U(b)=u_b$. Subtracting the two vector identities and using that every $\psi\in L_g^\infty(I)$ occurs as $\Dg\varphi$ for some $\varphi\in\mathcal T_g$, we get
\[
 \int_{[a,b)}(u-U)\psi\,d\mug=0\qquad\forall\psi\in L_g^\infty(I).
\]
For fixed $x^*$ choose $\psi=\operatorname{sgn}(x^*(u-U))$. Hence $x^*(u-U)=0$ $\mug$-a.e. Strong measurability gives an essentially separable range; a countable norming family and Hahn--Banach then give one common null set on whose complement $u=U$. Thus (ii) holds.

\smallskip\noindent\emph{(ii)$\Rightarrow$(iii).} This is Definition~\ref{def:ACB}: the displayed primitive belongs to $AC_g^{\rm B}$, has derivative $v$ $\mug$-a.e., the prescribed endpoints, and represents $u$ $\mug$-a.e.

\smallskip\noindent\emph{(iii)$\Rightarrow$(i).} Apply the lateral product formula to $U$ and the scalar test function; replacing $U$ by $u$ and $\Dg U$ by $v$ in $\mug$-a.e. integrals gives \eqref{eq:weakX}.

\smallskip\noindent\emph{Uniqueness.} The derivative and initial value determine the primitive, by Lemma~\ref{lem:densityunique}.
\end{proof}

\begin{remark}[Classical reduction]
For $X=\mathbb R$ and $g(t)=t$, $\mug=dt$ and $\Sg\varphi=\varphi$. The first implication reduces to the classical reconstruction from
\[
 \int_a^b u\varphi'\,dt+\int_a^b v\varphi\,dt
 =u_b\varphi(b)-u_a\varphi(a),
 \qquad U(t)=u_a+\int_a^t v(s)\,ds.
\]
\end{remark}

The scalar analogue is Theorem 4.6 of arXiv:2609.10659v1, Section 4.4 \cite{FernandezLateral2026}. The argument above records the additional separability step required to pass from scalarized vanishing to a Banach-valued identity.

\section{Variational framework and stable projections}\label{sec:gelfand}
Let
\begin{equation}\label{eq:gelfand}
 V\hookrightarrow H\simeq H^*\hookrightarrow V^*
\end{equation}
be a real Gelfand triple, where $V$ and $H$ are separable Hilbert spaces and $V\hookrightarrow H$ is continuous and dense. We identify $H$ with its canonical image in $V^*$.

Let $a:I\times V\times V\to\R$ be a family of bilinear forms satisfying, for a fixed representative defined at every $t\in I$,
\begin{enumerate}[label=(A\arabic*)]
\item $t\mapsto a(t;u,v)$ is $\mug$-measurable for all $u,v\in V$;
\item $|a(t;u,v)|\le M\norm{u}_V\norm{v}_V$ for all $t,u,v$;
\item $a(t;v,v)\ge\alpha\norm{v}_V^2$ for all $t,v$, with $\alpha>0$.
\end{enumerate}
Write $A(t)\in\mathcal L(V,V^*)$ for the associated operator.

\begin{lemma}[Measurability of the operator term]\label{lem:Ameas}
If $z\in L_g^2(I;V)$, then $A(\cdot)z(\cdot)$ is strongly measurable as a $V^*$-valued function and
\[
 \norm{A(\cdot)z(\cdot)}_{L_g^2(V^*)}\le M\norm{z}_{L_g^2(V)}.
\]
\end{lemma}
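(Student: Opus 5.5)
The plan is to prove strong measurability first for simple functions and then pass to the limit; the norm bound is a pointwise consequence of (A2). Since $z\in L_g^2(I;V)$ is strongly measurable, there are simple functions $z_n=\sum_{k}\chi_{E_{n,k}}v_{n,k}$, with $E_{n,k}$ $\mug$-measurable and $v_{n,k}\in V$, such that $z_n(t)\to z(t)$ in $V$ for $\mug$-a.e.\ $t$. For each $n$ we have
\[
 A(t)z_n(t)=\sum_k\chi_{E_{n,k}}(t)\,A(t)v_{n,k}.
\]
By (A2), $\norm{A(t)z_n(t)-A(t)z(t)}_{V^*}\le M\norm{z_n(t)-z(t)}_V\to0$ a.e. Since a.e.\ limits of strongly measurable functions are strongly measurable, it suffices to show that $t\mapsto A(t)v$ is strongly measurable for each fixed $v\in V$.

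For fixed $v$ we use Pettis' theorem. The space $V^*$ is a separable Hilbert space, being the dual of the separable Hilbert space $V$, so $t\mapsto A(t)v$ is automatically separably valued. It therefore suffices to check weak measurability. Every functional on $V^*$ is given by evaluation at some $w\in V$ (reflexivity/Riesz), and
\[
 \ip{A(t)v}{w}=a(t;v,w),
\]
which is $\mug$-measurable by (A1). Hence $t\mapsto A(t)v$ is strongly measurable, and so is each $A(\cdot)z_n(\cdot)$, being a finite sum of products of indicator functions with such maps.

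For the norm bound, (A2) gives $\norm{A(t)z(t)}_{V^*}\le M\norm{z(t)}_V$ for every $t$. Squaring and integrating against $\mug$ over $[a,b)$ yields
\[
 \norm{A(\cdot)z(\cdot)}_{L_g^2(V^*)}\le M\norm{z}_{L_g^2(V)}.
\]

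The only delicate point is measurability of $t\mapsto A(t)v$: hypothesis (A1) gives only scalar measurability, so one needs separability of $V^*$ together with Pettis' theorem. Because (A1) and (A2) are assumed for a fixed representative defined at every $t$, no issue about choosing versions arises. Changing $z$ on a $\mug$-null set changes $A(\cdot)z(\cdot)$ only on a null set, so the statement is well defined on equivalence classes.
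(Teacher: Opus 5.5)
Your proof is correct and follows essentially the same route as the paper. For fixed $v$, weak measurability of $t\mapsto A(t)v$ comes from (A1) via reflexivity, and strong measurability from Pettis' theorem with separability of $V^*$. Then $V$-valued simple approximants of $z$ together with (A2) give a.e.\ convergence in $V^*$ and the pointwise bound $\norm{A(t)z(t)}_{V^*}\le M\norm{z(t)}_V$.
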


\begin{proof}
Because $V$ is a separable Hilbert space, its dual $V^*$ is separable as well. Fix $v\in V$. To check weak measurability of the $V^*$-valued map $t\mapsto A(t)v$, one must test against $(V^*)^*=V^{**}$. Since $V$ is Hilbert, hence reflexive, the canonical identification $V^{**}\simeq V$ shows that it is enough to test against vectors $w\in V$. Assumption (A1) gives precisely the measurability of
\[
 t\longmapsto \ip{A(t)v}{w}=a(t;v,w)
 \qquad (w\in V).
\]
Thus $A(\cdot)v$ is weakly measurable, and Pettis' measurability theorem, together with separability of $V^*$, yields strong measurability.

Now let $z\in L_g^2(I;V)$. Choose $V$-valued simple functions $z_j$ such that $z_j(t)\to z(t)$ in $V$ for almost every $t$. Each $A(\cdot)z_j(\cdot)$ is strongly measurable because it is a finite sum of strongly measurable maps of the preceding form. By (A2),
\[
 \norm{A(t)z_j(t)-A(t)z(t)}_{V^*}
 \le M\norm{z_j(t)-z(t)}_V,
\]
so $A(t)z_j(t)\to A(t)z(t)$ in $V^*$ almost everywhere. Hence $A(\cdot)z(\cdot)$ is strongly measurable as an almost-everywhere pointwise limit of strongly measurable functions. Finally,
\[
 \norm{A(t)z(t)}_{V^*}
 =\sup_{\norm v_V\le1}|a(t;z(t),v)|
 \le M\norm{z(t)}_V,
\]
and squaring and integrating proves the asserted $L_g^2$ estimate.
\end{proof}

\begin{definition}[Stable projection property (P)]\label{def:P}
There exist finite-dimensional spaces
\[
 V_1\subset V_2\subset\cdots\subset V,
 \qquad \overline{\bigcup_mV_m}^{\,V}=V,
\]
such that the $H$-orthogonal projections $P_m:H\to V_m$ satisfy
\begin{equation}\label{eq:P}
 \sup_m\norm{P_m}_{\mathcal L(V,V)}\le C_P<\infty.
\end{equation}
\end{definition}

\begin{proposition}[Automatic stable projection chain]\label{prop:Pautomatic}
For every continuous dense embedding of separable Hilbert spaces $V\hookrightarrow H$, property \emph{(P)} is realizable. More precisely, one can choose the spaces $V_m$ so that
\[
 \sup_m\norm{P_m}_{\mathcal L(V,V)}\le\sqrt2.
\]
Thus, in the Hilbert setting of \eqref{eq:gelfand}, (P) is not an additional restriction on the pair $(V,H)$; it is a property of a suitably chosen Galerkin chain.
\end{proposition}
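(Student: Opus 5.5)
The idea is to decompose $H$ into spectral bands of the operator that represents the $V$-inner product, and to build the Galerkin spaces band by band. On each band the $V$- and $H$-norms differ by a factor of at most $2$, and the bands are orthogonal in both inner products. No compactness of $V\hookrightarrow H$ is used, so the representing operator may have continuous spectrum.

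\emph{Step 1 (the operator $\Lambda$).} Consider the closed, densely defined, symmetric form $(v,w)\mapsto (v,w)_V$ on $H$ with domain $V$. It is closed because $V$ is complete and $\norm{v}_H\le C\norm{v}_V$. By Kato's representation theorem (equivalently, Lax--Milgram applied to the $V$-inner product) there is a self-adjoint operator $\Lambda$ in $H$ with
\[
 D(\Lambda^{1/2})=V,\qquad (v,w)_V=(\Lambda^{1/2}v,\Lambda^{1/2}w)_H .
\]
The continuity of the embedding gives $\Lambda\ge\lambda_0:=C^{-2}>0$. Let $E$ be the spectral measure of $\Lambda$. For $k\ge0$ set
\[
 H_k:=E\bigl([\lambda_02^k,\lambda_02^{k+1})\bigr)H .
\]
These are closed, mutually $H$-orthogonal subspaces with $\bigoplus_k H_k=H$. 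Since each band is bounded, $H_k\subset D(\Lambda)\subset V$. Since $E$ commutes with $\Lambda^{1/2}$, the $H_k$ are also mutually $V$-orthogonal. Finally, for $x\in H_k$ the spectral theorem gives
\[
 \lambda_02^k\norm{x}_H^2\le\norm{x}_V^2\le\lambda_02^{k+1}\norm{x}_H^2 .
\]

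\emph{Step 2 (the chain).} Each $H_k$ is separable, being a closed subspace of $H$. Fix an $H$-orthonormal basis $(e_{k,j})_j$ of $H_k$; it may be finite or empty. Let
\[
 V_m:=\operatorname{span}\{e_{k,j}:0\le k\le m,\ 1\le j\le m\}.
\]
These spaces are finite-dimensional, nested and contained in $V$. The $H$-orthogonal projection $P_m$ splits as $P_m=\sum_{k\le m}Q_{k,m}E_k$, where $E_k$ is the projection onto $H_k$ and $Q_{k,m}$ is the $H$-orthogonal projection of $H_k$ onto the span of its first $m$ basis vectors. For $x\in V$, the $V$-orthogonality of the bands together with the two-sided bound of Step 1 gives
\[
 \norm{P_mx}_V^2=\sum_{k\le m}\norm{Q_{k,m}E_kx}_V^2
 \le\sum_{k\le m}\lambda_02^{k+1}\norm{E_kx}_H^2
 \le 2\sum_{k}\norm{E_kx}_V^2=2\norm{x}_V^2 .
\]
This is exactly the bound $\sqrt2$ claimed in the statement.

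\emph{Step 3 (density in $V$).} Let $v\in V$ and $\varepsilon>0$. By spectral calculus,
\[
 \norm{v-E([\lambda_0,\lambda_02^{K}))v}_V^2=\int_{[\lambda_02^K,\infty)}\lambda\,d(E_\lambda v,v)\to0
 \qquad (K\to\infty).
\]
So $v$ is $V$-close to its projection onto the finitely many bands $H_0,\dots,H_{K-1}$. On each such band the $V$-norm is equivalent to the $H$-norm. Hence truncating the $H$-orthonormal expansion within the band converges in $V$ as well. This shows that $\bigcup_mV_m$ is dense in $V$.

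\emph{Main obstacle.} The delicate step is Step 1. One has to make sure that the $H$-orthogonal projection really commutes with the band decomposition, so that there are no cross terms in the $V$-norm. That is precisely why the bands must be spectral subspaces of $\Lambda$ rather than arbitrary $H$-orthogonal pieces. One must also justify $H_k\subset V$ without assuming a compact resolvent. Both points follow from the functional calculus of the self-adjoint operator $\Lambda$, so I would state the representation theorem carefully and derive everything else from it.
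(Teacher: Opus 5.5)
Your proposal is correct and follows essentially the same route as the paper: dyadic spectral bands of the operator that represents the $V$-inner product on $H$, an $H$-orthonormal basis inside each band, and the factor-$2$ band estimate combined with $V$-orthogonality of the bands to get the bound $\sqrt2$. The differences are cosmetic. You work with the unbounded operator $\Lambda$ from Kato's representation theorem, whereas the paper uses its bounded inverse $B=jj^\dagger$ together with an explicit unitary, so your bands are the paper's bands transported by $\lambda\mapsto 1/\lambda$ (up to normalization). You also enumerate the basis vectors diagonally rather than in a single sequence, and you prove density by truncating to finitely many bands rather than by dominated convergence over the band series.
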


\begin{proof}
Let $j:V\to H$ denote the embedding and let $j^\dagger:H\to V$ be its Hilbert adjoint. Set
\[
 B:=jj^\dagger\in\mathcal L(H).
\]
Then $B$ is bounded, self-adjoint and positive, and
\[
 (Bh,h)_H=\norm{j^\dagger h}_V^2.
\]
Because $j(V)$ is dense in $H$, $\ker j^\dagger=(\operatorname{ran}j)^\perp=\{0\}$, so $B$ is injective. Moreover,
\[
 (\operatorname{ran}j^\dagger)^\perp=\ker j=\{0\},
\]
so $\operatorname{ran}j^\dagger$ is dense in $V$.

On $\operatorname{ran}j^\dagger$ define
\[
 U(j^\dagger h):=B^{1/2}h.
\]
The preceding identity gives
\[
 \norm{U(j^\dagger h)}_H^2=(Bh,h)_H=\norm{j^\dagger h}_V^2,
\]
so $U$ is an isometry. Its range $\operatorname{ran}B^{1/2}$ is dense in $H$ because $\ker B=\{0\}$. Hence $U$ extends uniquely to a unitary map $U:V\to H$. On the dense set $\operatorname{ran}j^\dagger$ one has
\[
 j=B^{1/2}U,
\]
and therefore, after identifying $V$ with $j(V)=\operatorname{ran}B^{1/2}\subset H$,
\begin{equation}\label{eq:VnormB}
 \norm{x}_V=\norm{B^{-1/2}x}_H,
 \qquad x\in V.
\end{equation}

Let $L=\norm B>0$; the zero-dimensional case is trivial. For $k\ge0$ set
\[
 I_k=(L2^{-(k+1)},L2^{-k}],
 \qquad E_k:=\mathbf 1_{I_k}(B).
\]
Since $B$ is injective, its spectral projection at $\{0\}$ is zero, and the mutually orthogonal bands $E_kH$ exhaust $H$. On $E_kH$ the operator $B^{-1/2}$ is bounded, so $E_kH\subset V$, and \eqref{eq:VnormB} gives
\begin{equation}\label{eq:bandnorms}
 \frac{2^k}{L}\norm{x}_H^2
 \le \norm{x}_V^2
 \le \frac{2^{k+1}}{L}\norm{x}_H^2,
 \qquad x\in E_kH.
\end{equation}
Different bands are orthogonal also for the $V$ inner product induced by \eqref{eq:VnormB}.

Choose an $H$-orthonormal basis in every nonzero $E_kH$ and enumerate the countable union as $(e_n)_{n\ge1}$. Put
\[
 V_m:=\operatorname{span}\{e_1,\ldots,e_m\}
\]
and let $P_m$ be the $H$-orthogonal projection onto $V_m$. If $Q_{m,k}$ denotes the projection in $E_kH$ onto the basis vectors from that band which occur among $e_1,\ldots,e_m$, then, for $v\in V$,
\[
 P_mv=\sum_k Q_{m,k}E_kv.
\]
Using \eqref{eq:bandnorms}, the $H$-contractivity of $Q_{m,k}$ and the $V$-orthogonality of the bands,
\[
 \begin{aligned}
 \norm{P_mv}_V^2
 &=\sum_k\norm{Q_{m,k}E_kv}_V^2\\
 &\le\sum_k\frac{2^{k+1}}{L}\norm{Q_{m,k}E_kv}_H^2\\
 &\le2\sum_k\norm{E_kv}_V^2
 =2\norm v_V^2.
 \end{aligned}
\]
Hence $\norm{P_m}_{\mathcal L(V,V)}\le\sqrt2$ for every $m$.

It remains to prove density in the $V$ norm. For each fixed $k$, $Q_{m,k}E_kv\to E_kv$ in $H$ and therefore in $V$ by \eqref{eq:bandnorms}. The complementary projection $I-Q_{m,k}$ satisfies the same band estimate, so
\[
 \norm{(I-Q_{m,k})E_kv}_V^2\le2\norm{E_kv}_V^2.
\]
Since $\sum_k\norm{E_kv}_V^2=\norm v_V^2$, dominated convergence for the series yields
\[
 \norm{v-P_mv}_V^2
 =\sum_k\norm{(I-Q_{m,k})E_kv}_V^2\longrightarrow0.
\]
Thus $\overline{\bigcup_mV_m}^{\,V}=V$, and (P) holds with $C_P\le\sqrt2$.
\end{proof}

\begin{lemma}\label{lem:Pstrong}
For any chain satisfying (P), $P_mv\to v$ strongly in $V$ for every $v\in V$.
\end{lemma}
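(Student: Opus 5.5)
The argument is the standard density-plus-uniform-boundedness scheme; equivalently, a Céa-type quasi-optimality estimate. Fix $v\in V$ and $\varepsilon>0$. By the density requirement in Definition~\ref{def:P}, the union $\bigcup_m V_m$ is dense in $V$, so there are an index $m_0$ and an element $w\in V_{m_0}$ with $\norm{v-w}_V<\varepsilon$.

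The key algebraic point is that $P_m$ fixes $V_m$. Since $P_m$ is the $H$-orthogonal projection onto $V_m$, we have $P_m w = w$ for every $w\in V_m$. Because the chain is nested, $w\in V_m$ for all $m\ge m_0$, so $P_m w=w$ for all such $m$.

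For $m\ge m_0$ we then write
\[
 v-P_mv=(v-w)-P_m(v-w),
\]
and use the uniform bound \eqref{eq:P} to obtain
\[
 \norm{v-P_mv}_V\le (1+C_P)\norm{v-w}_V<(1+C_P)\varepsilon .
\]
Since $\varepsilon$ is arbitrary, $P_mv\to v$ in $V$.

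Two small points need checking along the way. First, $P_m$ is a priori defined on $H$, so its action on $v\in V\subset H$ makes sense, and \eqref{eq:P} refers to its restriction to $V$; this restriction maps into $V_m\subset V$. Second, the nestedness $V_m\subset V_{m+1}$ is exactly what keeps $w$ fixed for every $m\ge m_0$, not only for $m=m_0$.

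There is no genuine obstacle: the uniform stability \eqref{eq:P} carries the whole argument, and without it only $H$-convergence would follow.
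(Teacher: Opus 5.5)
Your proposal is correct and follows the paper's proof essentially verbatim: approximate $v$ by some $w\in V_N$, use nestedness so that $P_mw=w$ for all $m\ge N$, and bound $\norm{P_m(v-w)}_V+\norm{v-w}_V$ by $(C_P+1)\varepsilon$ via \eqref{eq:P}. Your decomposition $v-P_mv=(v-w)-P_m(v-w)$ is the paper's $P_mv-v=P_m(v-w)+(w-v)$ up to sign.
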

\begin{proof}
Choose $w\in V_N$ with $\norm{v-w}_V<\varepsilon$. For $m\ge N$, $P_mw=w$, and
\[
 P_mv-v=P_m(v-w)+(w-v).
\]
Thus
\[
 \norm{P_mv-v}_V\le C_P\norm{v-w}_V+\norm{w-v}_V\le(C_P+1)\varepsilon.
\]
\end{proof}

\begin{remark}[Logical role of the stable chain]\label{rem:Pmap}
Property (P) is not part of the Banach-valued primitive calculus, the product formula, the weak--strong reconstruction theorem, the definition of the posterior problem, or the atomic Lax--Milgram resolvent. Proposition~\ref{prop:Pautomatic} shows that, in the present separable Hilbert setting, a suitable stable chain is always available. The Lions--Magenes and Galerkin proofs use such a chain through the uniformly $V$-bounded $H$-orthogonal projections. By contrast, the Rothe construction in Section~\ref{sec:rothe} uses no spatial projection at all. Thus the distinction between the two existence arguments is one of mechanism rather than an additional abstract hypothesis on $(V,H)$.
\end{remark}

\section{A lateral Stieltjes--Lions--Magenes identity}\label{sec:lions}
\begin{definition}
Define
\[
 \Wplus:=\{u\in AC_g^{\rm B}(I;V^*):\Dg u\in L_g^2(I;V^*),\ \Sg u\in L_g^2(I;V)\}.
\]
For $u\in\Wplus$,
\[
 u=u(a)+\Pgminus\Dg u,
 \qquad \Sg u=u(a)+\Pgplus\Dg u
\]
in $V^*$, the second identity being understood $\mug$-almost everywhere.
\end{definition}

The next theorem is the basic energy identity behind the posterior formulation.  Its proof is given in full because three points have no exact analogue in the ordinary continuous-time argument: the projection chain must be preserved exactly as stated in (P), the atomic contribution must survive the limit, and the $H$-valued lateral traces cannot be assumed before they are constructed.

\begin{theorem}[Stieltjes--Lions--Magenes identity]\label{thm:LM}
Let $u\in\Wplus$ and $u(a)=u_0\in H$. Then $u(t)\in H$ for every $t\in I$, $u$ is left-continuous from $I$ into $H$, and every $r<b$ has a right trace $u(r^+)\in H$. Moreover, for all $a\le s\le t\le b$,
\begin{equation}\label{eq:LM}
 \int_{[s,t)}\ip{\Dg u(r)}{\Sg u(r)}\,d\mug(r)
 =\frac12\norm{u(t)}_H^2-\frac12\norm{u(s)}_H^2
 +\frac12\sum_{r\in \Ag\cap[s,t)}\norm{u(r^+)-u(r)}_H^2.
\end{equation}
In particular,
\begin{equation}\label{eq:LMtracebound}
 \sup_{t\in I}\norm{u(t)}_H^2
 \le \norm{u_0}_H^2
 +2\sqrt2\,\norm{\Dg u}_{L_g^2(V^*)}\norm{\Sg u}_{L_g^2(V)}.
\end{equation}
\end{theorem}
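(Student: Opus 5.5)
We approximate $u$ by Galerkin projections, apply the exact finite-dimensional identity coming from the lateral product formula, and pass to the limit using the stable projection chain of Proposition~\ref{prop:Pautomatic}.

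\textbf{Step 1: finite-dimensional identity.} Fix a chain $(V_m,P_m)$ satisfying (P) with $C_P\le\sqrt2$. Since $P_m:V\to V_m$ is $V$-bounded and $H$-self-adjoint, it extends by duality to $P_m^*:V^*\to V_m$, defined by $(P_m^*f,w)_H=\ip{f}{P_mw}$ for $w\in V$. This extension satisfies $\norm{P_m^*f}_{V^*}\le C_P\norm{f}_{V^*}$ and agrees with $P_m$ on $H$. Set $u_m:=P_m^*u$. This is a $V_m$-valued function in $AC_g^{\rm B}(I;V_m)$, with $\Dg u_m=P_m^*\Dg u$ and $\Sg u_m=P_m^*\Sg u$. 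Apply Theorem~\ref{thm:product} with $X=V_m$ (finite-dimensional, $H$-inner product), $U=u_m$ and $\Phi=u_m$. Using \eqref{eq:prod1} and \eqref{eq:prod2}, average the two expressions for $\Dg\norm{u_m}_H^2$. Then use $\Sg u_m-u_m=\dg\Dg u_m$. This gives, $\mug$-a.e.,
\[
 \Dg\tfrac12\norm{u_m}_H^2=(\Dg u_m,\Sg u_m)_H-\tfrac12\dg\norm{\Dg u_m}_H^2 .
\]
Integrate over $[s,t)$. The atomic term becomes $\frac12\sum_{r\in\Ag\cap[s,t)}\norm{u_m(r^+)-u_m(r)}_H^2$, because $\dg(r)\Dg u_m(r)=u_m(r^+)-u_m(r)$. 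This is exactly \eqref{eq:LM} for $u_m$.

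\textbf{Step 2: rewrite the pairing.} We have $(\Dg u_m,\Sg u_m)_H=\ip{\Dg u}{P_m P_m^*\Sg u}$. Because $\Sg u\in V$ a.e., $P_m^*\Sg u=P_m\Sg u$. By Lemma~\ref{lem:Pstrong}, $P_m\Sg u\to \Sg u$ in $V$ a.e., with the domination $C_P\norm{\Sg u}_V$. Dominated convergence then gives
\[
\int\ip{\Dg u}{\Sg u_m}\,d\mug\to\int\ip{\Dg u}{\Sg u}\,d\mug .
\]
Taking $s=a$, so that $u_m(a)=P_mu_0$, we obtain the uniform bound
\[
\sup_t\norm{u_m(t)}_H^2\le\norm{u_0}_H^2+2C_P\norm{\Dg u}_{L_g^2(V^*)}\norm{\Sg u}_{L_g^2(V)} ,
\]
which is where the factor $2\sqrt2$ in \eqref{eq:LMtracebound} comes from. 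Apply the same identity to the differences $u_m-u_k=(P_m^*-P_k^*)u$. Its pairing term is $\int\ip{\Dg u}{(P_m-P_k)^2\Sg u}$ (after using self-adjointness), which tends to $0$ by dominated convergence. All other terms in the identity are nonnegative. Hence $(u_m(t))$ is Cauchy in $H$ uniformly in $t\in I$, and so are the jump sums.

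\textbf{Step 3: limit and traces.} The uniform $H$-limit $w(t)$ coincides with $u(t)$, because $P_m^*u(t)\to u(t)$ weakly in $V^*$. Hence $u(t)\in H$ for all $t$. Left-continuity into $H$ and existence of right traces in $H$ pass to uniform limits of the $u_m$, which have these properties by Proposition~\ref{prop:primitive}. The $H$ right trace agrees with the $V^*$ one. Each jump sum converges, since for every finite set of atoms the terms converge and the tails are uniformly small by the Cauchy estimate. This yields \eqref{eq:LM}, and \eqref{eq:LMtracebound} follows from the bound in Step~2.

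\textbf{Main obstacle.} The delicate point is the uniform-in-$t$ Cauchy estimate in Step~2. It requires handling the $V^*$-dual extension of $P_m$ correctly, since $P_m$ is not a priori defined on $V^*$. It also requires that the nonnegative atomic sum controls the jump parts without any assumption that $u$ is $H$-valued beforehand.
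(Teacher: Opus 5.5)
Your proof is correct, but it closes the limit by a different mechanism than the paper. Your operator $P_m^*$ is exactly the paper's $\Pi_m$, which the paper writes through an $H$-orthonormal basis $(w_j)$ adapted to the chain. Your Step~1 is the paper's Step~2.

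\textbf{The paper's route.} It constructs the $H$-value of $u(t)$ pointwise. The uniform bound on $\norm{u_m(t)}_H^2=\sum_{j\le N_m}|\ip{u(t)}{w_j}|^2$ produces an element $h_t\in H$ with these Fourier coefficients, and $h_t$ is then identified with $u(t)$. The jump sums are passed to the limit by monotone convergence of Fourier partial sums. Left continuity and the right trace are proved afterwards (Steps~5--6), by combining weak $H$-compactness with norm convergence extracted from the limiting identity on shrinking intervals.

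\textbf{Your route.} You apply the finite-dimensional identity to $u_m-u_k$. Its pairing term $\int_{[a,t)}\ip{\Dg u}{(P_m-P_k)^2\Sg u}\,d\mug$ is bounded by $\norm{\Dg u}_{L_g^2(V^*)}\norm{(P_m-P_k)^2\Sg u}_{L_g^2(V)}\to0$, uniformly in $t$. This gives uniform Cauchy convergence of $u_m$ in $H$ on all of $I$, together with convergence of the jumps in $\ell^2(\Ag;H)$.

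\textbf{Comparison.} Your argument is the classical Lions--Magenes Cauchy argument transplanted to atoms. It yields the stronger statement that $\Pi_m u\to u$ uniformly on $I$ in $H$. Left continuity and right traces then follow directly, since regulated functions are closed under uniform limits. The paper's Steps~5--6 and its orthonormal-basis bookkeeping therefore disappear, and you do not use monotonicity of $\norm{P_m x}_H$ in $m$. The paper's route is more pointwise and needs only a uniform bound rather than a Cauchy estimate, at the cost of the separate continuity arguments. Both routes use the stable chain in the same essential way: the domination $\norm{P_m\Sg u}_V\le C_P\norm{\Sg u}_V$ and the convergence $P_mv\to v$ in $V$. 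Both give the constant $2C_P\le2\sqrt2$ in the trace bound.
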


\begin{proof}
We divide the proof into seven steps.

\smallskip
\noindent\emph{Step 1: an orthonormal system adapted to a stable Galerkin chain.}
Choose the chain supplied by Proposition~\ref{prop:Pautomatic}; in particular $C_P\le\sqrt2$. The spaces occurring in this realization of (P) are nested. Initial zero spaces and repeated spaces may be omitted without changing the dense union or the projection bound. If the resulting chain stabilizes at a finite-dimensional space, density forces $V$ itself to be finite dimensional; in that case choose an $H$-orthonormal basis of $V$ and the argument below stops after finitely many coordinates. Otherwise, after passing only to this subsequence of the original chain and relabelling the retained spaces by $m$, start with an $H$-orthonormal basis of the first nonzero space and extend it successively. We obtain an $H$-orthonormal sequence $(w_j)_{j\ge1}\subset V$ and strictly increasing integers $1\le N_1<N_2<\cdots$ such that
\[
 V_m=\operatorname{span}\{w_1,\ldots,w_{N_m}\}.
\]
 We stress that we retain only projections belonging to the original chain. Although arbitrary intermediate prefixes also define $H$-orthogonal projections, the chosen realization of (P) gives a uniform $V$-bound only for the retained projections $P_m$; replacing the chain by all prefixes would silently introduce an additional hypothesis.

For $\xi\in V^*$ define
\begin{equation}\label{eq:Pi_m}
 \Pi_m\xi:=\sum_{j=1}^{N_m}\ip{\xi}{w_j}w_j\in V_m.
\end{equation}
If $h\in H$, then the duality pairing agrees with the $H$ inner product, and consequently $\Pi_m h=P_mh$.  For $u\in AC_g^{\rm B}(I;V^*)$ put
\[
 u_m:=\Pi_m u.
\]
For fixed $m$,
\[
 \|\Pi_m\xi\|_V\le \|\xi\|_{V^*}\sum_{j=1}^{N_m}\|w_j\|_V^2,
\]
so $\Pi_m:V^*\to V_m$ is bounded. Bounded linear maps commute with the Bochner integral (first for simple functions and then by $L^1$ approximation). Hence the primitive representation gives
\begin{equation}\label{eq:projprimitive}
 u_m(t)=P_mu_0+\int_{[a,t)}\Pi_m\Dg u(r)\,d\mug(r),
 \qquad
 \Dg u_m=\Pi_m\Dg u.
\end{equation}
At almost every $r$ we have $\Sg u(r)\in V$.  Using $\Sg u=u+\dg\Dg u$ in $V^*$ and the linearity of $\Pi_m$,
\[
 \Sg u_m
 =\Pi_m u+\dg\Pi_m\Dg u
 =\Pi_m(\Sg u)
 =P_m\Sg u
 \qquad\mug\text{-a.e.}
\]

\smallskip
\noindent\emph{Step 2: the exact finite-dimensional identity.}
Because $u_m$ takes values in the finite-dimensional Hilbert space $V_m$, the scalar lateral product rule applied to $(u_m,u_m)_H$ yields
\begin{equation}\label{eq:normderivativefinite}
 \Dg\!\left(\frac12\norm{u_m}_H^2\right)
 =(\Dg u_m,u_m)_H+\frac12\dg\norm{\Dg u_m}_H^2
 \qquad\mug\text{-a.e.}
\end{equation}
Indeed, the correction term is exactly the half product of the two jumps in the two factors.  Since
\[
 \Sg u_m=u_m+\dg\Dg u_m,
\]
we therefore obtain pointwise almost everywhere
\begin{align*}
 (\Dg u_m,\Sg u_m)_H
 &= (\Dg u_m,u_m)_H+\dg\norm{\Dg u_m}_H^2\\
 &= \Dg\!\left(\frac12\norm{u_m}_H^2\right)
   +\frac12\dg\norm{\Dg u_m}_H^2.
\end{align*}
Integrating over $[s,t)$ gives
\begin{align}\label{eq:LMm}
 \int_{[s,t)}(\Dg u_m,\Sg u_m)_H\,d\mug
 &=\frac12\norm{u_m(t)}_H^2-\frac12\norm{u_m(s)}_H^2\notag\\
 &\quad+\frac12\int_{[s,t)}\dg(r)\norm{\Dg u_m(r)}_H^2\,d\mug(r).
\end{align}
The last integral is purely atomic.  At an atom $r$,
\[
 \Delta u_m(r):=u_m(r^+)-u_m(r)=\dg(r)\Dg u_m(r),
\]
and therefore
\begin{align*}
 \int_{[s,t)}\dg\norm{\Dg u_m}_H^2\,d\mug
 &=\sum_{r\in \Ag\cap[s,t)}
   \dg(r)^2\norm{\Dg u_m(r)}_H^2\\
 &=\sum_{r\in \Ag\cap[s,t)}\norm{\Delta u_m(r)}_H^2.
\end{align*}
Thus \eqref{eq:LMm} is the desired identity at the finite-dimensional level.

\smallskip
\noindent\emph{Step 3: construction of the $H$ trace without presupposing it.}
Set $s=a$ in \eqref{eq:LMm}.  The integral term can be written solely in terms of the original function:
\[
 (\Dg u_m,\Sg u_m)_H
 =\ip{\Dg u}{P_m\Sg u}.
\]
By (P) and Cauchy--Schwarz,
\begin{align}\label{eq:uniformprojectedenergy}
 \left|\int_{[a,t)}\ip{\Dg u}{P_m\Sg u}\,d\mug\right|
 &\le \norm{\Dg u}_{L_g^2(V^*)}
       \norm{P_m\Sg u}_{L_g^2(V)}\\
 &\le C_P\norm{\Dg u}_{L_g^2(V^*)}\norm{\Sg u}_{L_g^2(V)}.
\end{align}
Together with $\norm{P_mu_0}_H\le\norm{u_0}_H$, this shows that, for each fixed $t$,
\[
 \norm{u_m(t)}_H^2
 =\sum_{j=1}^{N_m}|\ip{u(t)}{w_j}|^2
\]
is uniformly bounded in $m$.  Since the partial sums are nondecreasing, the full square sum is finite.  Hence there is a unique $h_t\in H$ satisfying
\[
 (h_t,w_j)_H=\ip{u(t)}{w_j}
 \qquad(j\ge1).
\]
We now identify the $V^*$-valued primitive value $u(t)$ with the canonical image of $h_t$ in $V^*$.  If $v\in V$, then
\[
 \ip{u(t)}{P_mv}=(u_m(t),P_mv)_H.
\]
By Lemma~\ref{lem:Pstrong}, $P_mv\to v$ strongly in $V$, while $u_m(t)\to h_t$ strongly in $H$ because its Fourier coefficients form the partial sums defining $h_t$.  Passing to the limit gives
\[
 \ip{u(t)}v=(h_t,v)_H
 \qquad\forall v\in V.
\]
Thus $u(t)=h_t$ in $V^*$ and, from now on, we regard $u(t)$ as an element of $H$.

The same argument applies to every atomic increment.  More directly, the nonnegative jump term in \eqref{eq:LMm} is uniformly bounded.  For a fixed atom $r$,
\[
 \norm{\Pi_m\Delta u(r)}_H^2
 =\sum_{j=1}^{N_m}|\ip{\Delta u(r)}{w_j}|^2
\]
is therefore bounded in $m$, and hence $\Delta u(r)\in H$.

\smallskip
\noindent\emph{Step 4: passage to the limit in the finite-dimensional identity.}
By Lemma~\ref{lem:Pstrong}, $P_mv\to v$ in $V$ for every $v\in V$, and (P) gives
\[
 \norm{P_m\Sg u(r)}_V\le C_P\norm{\Sg u(r)}_V.
\]
Dominated convergence therefore yields
\begin{equation}\label{eq:PmSstrong}
 P_m\Sg u\to\Sg u
 \quad\text{strongly in }L_g^2(I;V).
\end{equation}
Consequently,
\[
 \int_{[s,t)}\ip{\Dg u}{P_m\Sg u}\,d\mug
 \longrightarrow
 \int_{[s,t)}\ip{\Dg u}{\Sg u}\,d\mug.
\]
At each endpoint, $u_m(t)=P_mu(t)$ and $u_m(s)=P_mu(s)$ in $H$, so the endpoint norms converge to the corresponding $H$ norms.  At every atom,
\[
 \Delta u_m(r)=P_m\Delta u(r),
\]
and the adapted Fourier partial sums imply
\[
 \norm{P_m\Delta u(r)}_H^2\uparrow\norm{\Delta u(r)}_H^2.
\]
Monotone convergence for the counting sum over the at most countable atomic set therefore gives
\[
 \sum_{r\in \Ag\cap[s,t)}\norm{\Delta u_m(r)}_H^2
 \uparrow
 \sum_{r\in \Ag\cap[s,t)}\norm{\Delta u(r)}_H^2.
\]
Passing to the limit in \eqref{eq:LMm} proves \eqref{eq:LM}.  In particular the full jump-energy sum is finite.

\smallskip
\noindent\emph{Step 5: left continuity in $H$.}
Fix $t\in(a,b]$ and let $t_n\uparrow t$.  The primitive formula gives
\[
 \norm{u(t_n)-u(t)}_{V^*}
 \le\int_{[t_n,t)}\norm{\Dg u(r)}_{V^*}\,d\mug(r)\longrightarrow0,
\]
because the Bochner integral is absolutely continuous with respect to the finite measure $\mug$.  The trace bound already obtained from \eqref{eq:uniformprojectedenergy} shows that $(u(t_n))$ is bounded in $H$.  Any weakly convergent $H$-subsequence has a weak limit $h\in H$ whose image in $V^*$ must equal $u(t)$, by the strong $V^*$ convergence.  Hence the whole sequence converges weakly in $H$ to $u(t)$.

It remains to prove convergence of the norms.  Apply \eqref{eq:LM} on $[t_n,t)$.  We obtain
\begin{align*}
 \frac12\bigl(\norm{u(t)}_H^2-\norm{u(t_n)}_H^2\bigr)
 &=\int_{[t_n,t)}\ip{\Dg u}{\Sg u}\,d\mug
   -\frac12\sum_{r\in \Ag\cap[t_n,t)}\norm{\Delta u(r)}_H^2.
\end{align*}
The first term tends to zero because $\ip{\Dg u}{\Sg u}\in L_g^1(I)$.  The second tends to zero by continuity from above of the finite positive measure
\[
 E\longmapsto\sum_{r\in \Ag\cap E}\norm{\Delta u(r)}_H^2.
\]
Thus $\norm{u(t_n)}_H\to\norm{u(t)}_H$.  Weak convergence plus convergence of norms in the Hilbert space $H$ yields $u(t_n)\to u(t)$ strongly in $H$.  Hence $u$ is left-continuous as an $H$-valued function.

\smallskip
\noindent\emph{Step 6: existence of the right trace.}
Fix $t<b$ and let $t_n\downarrow t$, $t_n>t$.  From the primitive representation,
\begin{align*}
 u(t_n)
 &=u(t)+\int_{[t,t_n)}\Dg u(r)\,d\mug(r)\\
 &=u(t)+\dg(t)\Dg u(t)
   +\int_{(t,t_n)}\Dg u(r)\,d\mug(r).
\end{align*}
The last integral tends to zero in $V^*$, so
\begin{equation}\label{eq:rightVstar}
 u(t_n)\to u(t)+\dg(t)\Dg u(t)=\Sg u(t)
 \quad\text{in }V^*,
\end{equation}
where at non-atoms $\dg(t)=0$.  If $t$ is an atom, then $\mug(\{t\})>0$ and the condition $\Sg u\in L_g^2(I;V)$ forces the chosen representative $\Sg u(t)$ to belong to $V$ (changing its value at the atom would change the $L^2$ class).  In particular it belongs to $H$.  At a non-atom, $\Sg u(t)=u(t)\in H$.

As in Step 5, boundedness in $H$ plus \eqref{eq:rightVstar} gives weak $H$ convergence to $\Sg u(t)$.  To obtain norm convergence, apply \eqref{eq:LM} on $[t,t_n)$ and separate the atom $t$ if present.  Since
\[
 u(t^+)-u(t)=\Delta u(t),
\]
the contribution at the singleton $\{t\}$ is already exactly the jump energy required to pass from the anterior norm to the posterior norm.  The remaining integral and jump sum over $(t,t_n)$ tend to zero by the same absolute-continuity arguments as in Step 5.  It follows that
\[
 \norm{u(t_n)}_H\to\norm{\Sg u(t)}_H.
\]
Therefore $u(t_n)\to\Sg u(t)$ strongly in $H$.  We define
\[
 u(t^+):=\Sg u(t).
\]

\smallskip
\noindent\emph{Step 7: uniform trace estimate.}
Set $s=a$ in \eqref{eq:LM}, discard the nonnegative jump sum, and use Cauchy--Schwarz together with (P) only through the already established identity.  We obtain
\[
 \frac12\norm{u(t)}_H^2
 \le\frac12\norm{u_0}_H^2
 +\norm{\Dg u}_{L_g^2(V^*)}\norm{\Sg u}_{L_g^2(V)}.
\]
This already yields the same estimate with constant $1$ in front of the product.  The slightly more conservative form \eqref{eq:LMtracebound}, with $C_P\ge1$, is retained to match the uniform estimate used during the trace construction.  This completes the proof.
\end{proof}

\begin{remark}[Why atoms cannot simply be ``stretched'']
A tempting shortcut is to replace each atom by an ordinary time interval and appeal to the classical Lions--Magenes theorem.  This is not legitimate at the present level of regularity.  The anterior trace at an atom is obtained a priori only in $H$, not in $V$, whereas a linear interpolation across an artificial interval would require $V$-valued endpoints in order to retain the required spatial regularity.  The proof above works directly with the Stieltjes measure and therefore does not introduce this hidden assumption.
\end{remark}

For comparison, the continuous-time Gelfand-triple argument is discussed by Roub\'i\v{c}ek \cite[Section 7.2, p.~204 ff.]{Roubicek2013}.  The positive summation term in \eqref{eq:LM} is the additional information carried by the atomic part of $\mug$.

\section{Posterior parabolic problem and atomic resolvent}\label{sec:problem}
Given $u_0\in H$ and $f\in L_g^2(I;V^*)$, we study
\begin{equation}\label{eq:problem}
 \Dg u(t)+A(t)\Sg u(t)=f(t),\qquad u(a)=u_0.
\end{equation}

\begin{definition}
A \emph{posterior variational solution} of \eqref{eq:problem} is a function $u\in\Wplus$ with $u(a)=u_0$ such that there is one $\mug$-null set $N\subset[a,b)$ for which
\begin{equation}\label{eq:variational-pointwise}
 \ip{\Dg u(t)}v+a(t;\Sg u(t),v)=\ip{f(t)}v
 \qquad\forall v\in V,
\end{equation}
for every $t\in[a,b)\setminus N$.
\end{definition}

The requirement that the exceptional set be independent of $v$ is important at atomic times.  Since a $\mug$-null set cannot contain an atom, \eqref{eq:variational-pointwise} holds at every $r\in \Ag$ for the chosen representatives of $a$, $f$, and $\Dg u$.

\begin{proposition}[Implicit variational resolvent at an atom]\label{prop:atom}
Let $r\in \Ag$ and $\delta=\dg(r)>0$. Every posterior variational solution satisfies, in $V^*$,
\begin{equation}\label{eq:atom-resolvent}
 (I+\delta A(r))u(r^+)=u(r)+\delta f(r),
\end{equation}
where $I:V\to V^*$ denotes the canonical embedding induced by the $H$ inner product. The posterior state $u(r^+)\in V$ is uniquely determined by the incoming value $u(r)\in V^*$ and $f(r)\in V^*$. If, in addition, $u(r)\in H$ (in particular under the hypotheses of Theorem~\ref{thm:LM}), then the increment belongs to $H$ and \eqref{eq:atom-resolvent} can be written as
\begin{equation}\label{eq:atom}
 (u(r^+)-u(r),v)_H+\delta a(r;u(r^+),v)
 =\delta\ip{f(r)}v,
 \qquad v\in V.
\end{equation}
\end{proposition}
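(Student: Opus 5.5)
The plan is to evaluate the variational identity \eqref{eq:variational-pointwise} at the atom itself and then read off the three claims in turn. Since $\mug(\{r\})=\delta>0$, the common null set $N$ cannot contain $r$. Hence
\[
 \ip{\Dg u(r)}v+a(r;\Sg u(r),v)=\ip{f(r)}v\qquad\forall v\in V
\]
holds for the chosen representatives. The same mass argument used in Step~6 of Theorem~\ref{thm:LM} shows that $\Sg u(r)\in V$, because $\Sg u\in L_g^2(I;V)$ and $r$ carries positive mass. Proposition~\ref{prop:primitive} identifies $\Sg u(r)=u(r)+\delta\Dg u(r)$ with the right limit $u(r^+)$ in $V^*$. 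This gives the $V^*$ relation $\delta\Dg u(r)=u(r^+)-u(r)$.

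For the resolvent identity \eqref{eq:atom-resolvent}, multiply the displayed identity by $\delta$ and substitute this relation. The result is
\[
 \ip{u(r^+)-u(r)}{v}+\delta\, a(r;u(r^+),v)=\delta\ip{f(r)}v \qquad\forall v\in V.
\]
Since $u(r^+)\in V$, the pairing $\ip{u(r^+)}{v}$ equals $(u(r^+),v)_H=\ip{Iu(r^+)}v$. Rearranging gives $(I+\delta A(r))u(r^+)=u(r)+\delta f(r)$ in $V^*$.

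Uniqueness comes from Lax--Milgram applied to the bilinear form
\[
 b(w,v):=(w,v)_H+\delta a(r;w,v)
\]
on $V$. By (A2) and the continuity of $V\hookrightarrow H$, $b$ is bounded. By (A3), $b(w,w)\ge\delta\alpha\norm w_V^2$, so $b$ is coercive. Thus $I+\delta A(r)\colon V\to V^*$ is an isomorphism, and $u(r^+)$ is the unique solution for the right-hand side $u(r)+\delta f(r)\in V^*$. It therefore depends only on $u(r)$ and $f(r)$.

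If additionally $u(r)\in H$, which Theorem~\ref{thm:LM} guarantees since every $u\in\Wplus$ with $u(a)\in H$ has $H$-valued values, then the increment $u(r^+)-u(r)$ is a difference of two elements of $H$. Its pairing with $v$ is the $H$ inner product, so the identity above is exactly \eqref{eq:atom}.

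The main obstacle, though minor, is the pointwise meaning at the atom. We must make sure that the representative of $\Sg u$ at $r$ is $V$-valued, and that it coincides with $u(r)+\delta\Dg u(r)$ and with the right limit. These follow from the fact that values at an atom are fixed by the $L^2$ classes, together with \eqref{eq:Sg} and Proposition~\ref{prop:primitive}. I would state this explicitly before the algebraic steps.
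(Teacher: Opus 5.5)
Your proposal is correct and follows essentially the same route as the paper: you evaluate the variational identity at the atom (which cannot lie in the null set), substitute $\delta\Dg u(r)=u(r^+)-u(r)$ with $\Sg u(r)=u(r^+)\in V$, and obtain uniqueness of the update from Lax--Milgram for $(w,v)_H+\delta a(r;w,v)$. The $H$-form \eqref{eq:atom} then follows exactly as in the paper once $u(r)\in H$.
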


\begin{proof}
Because $\{r\}$ has positive $\mug$-measure, the variational identity holds at $r$. By the primitive formula in $V^*$,
\[
 u(r^+)-u(r)=\delta\Dg u(r),
 \qquad \Sg u(r)=u(r^+)\in V.
\]
Multiplying \eqref{eq:variational-pointwise} by $\delta$ and substituting these identities gives \eqref{eq:atom-resolvent} in $V^*$. If $u(r)\in H$, then $u(r^+)\in V\hookrightarrow H$, so the difference lies in $H$ and the canonical embedding $H\hookrightarrow V^*$ turns the first term into the $H$ inner product, giving \eqref{eq:atom}.

For uniqueness of the update, define on $V\times V$
\[
 b_r(w,v):=(w,v)_H+\delta a(r;w,v),
 \qquad
 \ell_r(v):=\ip{u(r)}v+\delta\ip{f(r)}v.
\]
The form is continuous because
\[
 |b_r(w,v)|
 \le C_{VH}^2\norm w_V\norm v_V+\delta M\norm w_V\norm v_V,
\]
where $C_{VH}$ is the norm of the embedding $V\hookrightarrow H$. It is coercive since
\[
 b_r(v,v)=\norm v_H^2+\delta a(r;v,v)
 \ge\delta\alpha\norm v_V^2.
\]
Moreover $\ell_r\in V^*$ with
\[
 |\ell_r(v)|\le\bigl(\norm{u(r)}_{V^*}+\delta\norm{f(r)}_{V^*}\bigr)\norm v_V.
\]
The Lax--Milgram theorem in the form of Brezis \cite[Corollary 5.8, p.~140]{Brezis2011} therefore gives a unique $w\in V$ satisfying $b_r(w,v)=\ell_r(v)$ for all $v\in V$. This $w$ is precisely $u(r^+)$.
\end{proof}

The same implicit post-event algebra appears at the modelling level in Fern\'andez--Area \cite[Section 4, Remark 4.2, equation (4.6)]{FernandezArea2026}.  That work explicitly leaves continuous well-posedness outside its scope; the present argument supplies the abstract coercive variational mechanism needed here.

\section{Energy identity, uniqueness, and stability}\label{sec:energy}
\begin{theorem}[Exact energy identity]\label{thm:energy}
If $u$ solves \eqref{eq:problem}, then for every $t\in I$,
\begin{align}\label{eq:energy}
 \frac12\norm{u(t)}_H^2
 &+\frac12\sum_{r\in \Ag\cap[a,t)}\norm{u(r^+)-u(r)}_H^2
 +\int_{[a,t)}a(r;\Sg u(r),\Sg u(r))\,d\mug(r)\notag\\
 &=\frac12\norm{u_0}_H^2
 +\int_{[a,t)}\ip{f(r)}{\Sg u(r)}\,d\mug(r).
\end{align}
Consequently,
\begin{equation}\label{eq:energyestimate}
 \sup_{t\in I}\norm{u(t)}_H^2
 +\sum_{r\in \Ag}\norm{u(r^+)-u(r)}_H^2
 +\alpha\norm{\Sg u}_{L_g^2(V)}^2
 \le 2\norm{u_0}_H^2+2\alpha^{-1}\norm f_{L_g^2(V^*)}^2.
\end{equation}
\end{theorem}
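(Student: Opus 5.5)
The plan is to combine the Stieltjes--Lions--Magenes identity of Theorem~\ref{thm:LM} with the variational equation \eqref{eq:variational-pointwise}, tested against $v=\Sg u(t)$. A posterior variational solution lies in $\Wplus$ and has $u(a)=u_0\in H$. Theorem~\ref{thm:LM} therefore applies: $u(t)\in H$ for all $t$, the right traces $u(r^+)\in H$ exist, and \eqref{eq:LM} holds with $s=a$. Since the exceptional null set $N$ in the definition of solution is independent of $v$, and $\Sg u(t)\in V$ for $\mug$-a.e.\ $t$, we may insert $v=\Sg u(t)$ for every $t$ outside a single null set. This gives, $\mug$-a.e.,
\[
 \ip{\Dg u}{\Sg u}=\ip{f}{\Sg u}-a(\cdot;\Sg u,\Sg u).
\]
Both terms on the right are in $L_g^1$: the first because $f\in L_g^2(V^*)$ and $\Sg u\in L_g^2(V)$, the second by (A2) together with the measurability of $t\mapsto a(t;\Sg u(t),\Sg u(t))$. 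For that measurability I would use Lemma~\ref{lem:Ameas}, which makes $A(\cdot)\Sg u(\cdot)$ strongly measurable in $V^*$, and pair it with the strongly measurable $\Sg u$. Integrating over $[a,t)$ and substituting the left side of \eqref{eq:LM} then gives \eqref{eq:energy} directly. Note that at atoms $\Sg u(r)=u(r^+)$, so the jump sum is exactly the one in \eqref{eq:LM}.

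For the estimate \eqref{eq:energyestimate}, I would use (A3) to bound the form integral below by $\alpha\int_{[a,t)}\norm{\Sg u}_V^2\,d\mug$. For the forcing term I would apply Young's inequality:
\[
 \ip{f}{\Sg u}\le \tfrac{1}{2\alpha}\norm f_{V^*}^2+\tfrac{\alpha}{2}\norm{\Sg u}_V^2.
\]
Absorbing $\tfrac{\alpha}{2}\norm{\Sg u}^2$ and multiplying by $2$ yields, for each $t$,
\[
 \norm{u(t)}_H^2+\sum_{r\in\Ag\cap[a,t)}\norm{\Delta u(r)}_H^2+\alpha\int_{[a,t)}\norm{\Sg u}_V^2\,d\mug\le\norm{u_0}_H^2+\alpha^{-1}\norm f_{L_g^2(V^*)}^2.
\]
All three left-hand terms are nonnegative. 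Taking $t=b$ bounds the full jump sum and the full $L_g^2(V)$ norm, using the convention that the integral over $[a,b)$ is the whole space. Taking the supremum over $t$ bounds $\sup\norm{u(t)}_H^2$. Each of the three quantities is therefore at most $\norm{u_0}_H^2+\alpha^{-1}\norm f^2$. Summing the supremum bound with the $t=b$ bound for the other two terms gives the stated right-hand side $2\norm{u_0}_H^2+2\alpha^{-1}\norm f^2$, which is the source of the factor $2$.

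The main obstacle is only the justification of choosing the test function $v=\Sg u(t)$ pointwise. This requires that the null set be uniform in $v$, which the definition grants, and that the resulting scalar functions be $\mug$-measurable and integrable. I expect both to be routine given Lemma~\ref{lem:Ameas}. Everything else is already contained in Theorem~\ref{thm:LM}, in particular that $u(t)$ and the jumps live in $H$.
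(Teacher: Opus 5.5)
Your proposal is correct and follows the paper's proof essentially step for step. You test with $v=\Sg u(t)$ outside the single null set, integrate and invoke Theorem~\ref{thm:LM} to get the identity, then use coercivity, Young's inequality, and the addition of the supremum bound and the $t=b$ bound to get the factor $2$; your remark that measurability of $t\mapsto a(t;\Sg u(t),\Sg u(t))$ follows from Lemma~\ref{lem:Ameas} is a small detail the paper leaves implicit.
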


\begin{proof}
For almost every $r$, choose $v=\Sg u(r)$ in \eqref{eq:variational-pointwise}.  This is admissible because $\Sg u\in L_g^2(I;V)$.  We obtain
\[
 \ip{\Dg u(r)}{\Sg u(r)}
 +a(r;\Sg u(r),\Sg u(r))
 =\ip{f(r)}{\Sg u(r)}.
\]
Every term is integrable: the first and last by Cauchy--Schwarz in the $V^*$--$V$ duality, and the middle term by (A2).  Integrating over $[a,t)$ and applying Theorem~\ref{thm:LM} to the first term gives \eqref{eq:energy} exactly.

By coercivity,
\[
 a(r;\Sg u,\Sg u)\ge\alpha\norm{\Sg u}_V^2.
\]
For the forcing term, Young's inequality with parameter $\alpha$ gives pointwise
\[
 \ip f{\Sg u}
 \le \norm f_{V^*}\norm{\Sg u}_V
 \le \frac1{2\alpha}\norm f_{V^*}^2
      +\frac\alpha2\norm{\Sg u}_V^2.
\]
Substitution into \eqref{eq:energy}, absorption of the last term, and multiplication by two give, for every $t\in I$,
\begin{align*}
 \norm{u(t)}_H^2
 &+\sum_{r\in \Ag\cap[a,t)}\norm{u(r^+)-u(r)}_H^2\\
 &+\alpha\int_{[a,t)}\norm{\Sg u(r)}_V^2\,d\mug(r)\\
 &\le \norm{u_0}_H^2
 +\alpha^{-1}\int_{[a,t)}\norm{f(r)}_{V^*}^2\,d\mug(r).
\end{align*}
This partial estimate controls the trace supremum by the right-hand side at $b$. Evaluating the same estimate at $t=b$ controls the complete jump and coercive terms by that right-hand side. Adding these two bounds yields \eqref{eq:energyestimate} with the factor $2$ on the right. Notice that the jump contribution is never estimated away; it appears with its exact positive coefficient.
\end{proof}

\begin{corollary}[Uniqueness and continuous dependence]\label{cor:unique}
Let $u_i$ be posterior variational solutions associated with data $(u_{0,i},f_i)$, $i=1,2$, for the same form $a$. Then
\begin{align}\label{eq:continuousdependence}
 \sup_{t\in I}\norm{u_1(t)-u_2(t)}_H^2
 &+\sum_{r\in \Ag}\norm{\Delta u_1(r)-\Delta u_2(r)}_H^2\notag\\
 &+\alpha\norm{\Sg u_1-\Sg u_2}_{L_g^2(V)}^2\notag\\
 &\le 2\norm{u_{0,1}-u_{0,2}}_H^2
 +2\alpha^{-1}\norm{f_1-f_2}_{L_g^2(V^*)}^2.
\end{align}
In particular the solution, if it exists, is unique.
\end{corollary}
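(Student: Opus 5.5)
The plan is to reduce to Theorem~\ref{thm:energy} by linearity. Set $w:=u_1-u_2$, $w_0:=u_{0,1}-u_{0,2}$ and $F:=f_1-f_2$. Since $\Wplus$ is a linear space, $w\in\Wplus$ and $w(a)=w_0\in H$. The operations $\Dg$ and $\Sg$ are linear: the primitive representation \eqref{eq:primitive} is linear in the pair (initial value, density), and Lemma~\ref{lem:densityunique} makes the density well defined. Hence $\Dg w=\Dg u_1-\Dg u_2$ and $\Sg w=\Sg u_1-\Sg u_2$ $\mug$-a.e.

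Next I would check that $w$ is a posterior variational solution with data $(w_0,F)$. Let $N_1,N_2$ be the exceptional null sets for $u_1,u_2$, and take $N:=N_1\cup N_2$, which is still $\mug$-null and independent of $v$. For $t\notin N$, subtract the two identities \eqref{eq:variational-pointwise}. Bilinearity of $a(t;\cdot,\cdot)$ then gives
\[
\ip{\Dg w(t)}v+a(t;\Sg w(t),v)=\ip{F(t)}v\qquad\forall v\in V.
\]
Also $F\in L_g^2(I;V^*)$, so $w$ satisfies every hypothesis of Theorem~\ref{thm:energy}.

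Applying \eqref{eq:energyestimate} to $w$ produces the claimed inequality once the jump term is identified. At each atom $r$, Theorem~\ref{thm:LM} gives the right traces $u_i(r^+)=\Sg u_i(r)$ in $H$, and the same holds for $w$. So
\[
w(r^+)-w(r)=\Sg w(r)-w(r)=\Delta u_1(r)-\Delta u_2(r).
\]
Uniqueness follows by taking identical data: the right-hand side of \eqref{eq:continuousdependence} vanishes, so $\sup_t\norm{u_1(t)-u_2(t)}_H=0$ and $u_1=u_2$ on all of $I$.

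I expect no real obstacle. The only points needing care are the common null set in the variational identity and the identification of the traces of $w$ with the differences of the traces of $u_1$ and $u_2$. The latter holds because the $H$-trace constructed in Step~3 of Theorem~\ref{thm:LM} coincides with the $V^*$-value, which is linear in $u$.
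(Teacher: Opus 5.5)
Your proposal is correct and follows essentially the same route as the paper: form $w=u_1-u_2$, use linearity of $\Dg$, $\Sg$ and $A(t)$ to see that $w\in\Wplus$ solves the problem with data $(w_0,F)$, then apply the energy estimate of Theorem~\ref{thm:energy} and take identical data for uniqueness. You are somewhat more explicit than the paper in two places it leaves implicit: the common null set $N_1\cup N_2$, and the identification of the jump term of $w$ with $\Delta u_1(r)-\Delta u_2(r)$.
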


\begin{proof}
Set $w=u_1-u_2$, $w_0=u_{0,1}-u_{0,2}$, and $F=f_1-f_2$.  Linearity of $\Dg$, $\Sg$, and $A(t)$ gives
\[
 \Dg w+A(t)\Sg w=F,
 \qquad w(a)=w_0.
\]
Since $\Wplus$ is a vector space, $w\in\Wplus$. Applying the partial estimate from the proof of Theorem~\ref{thm:energy} to $w$, first for the trace supremum and then at $t=b$ for the complete dissipative terms, gives \eqref{eq:continuousdependence}. If $w_0=0$ and $F=0$, the right-hand side vanishes, hence $\sup_t\norm{w(t)}_H^2=0$. Therefore $u_1=u_2$ at every time, not merely almost everywhere.
\end{proof}

\section{Existence by Galerkin approximation}\label{sec:galerkin}
Choose once and for all a stable chain $V_m$ supplied by Proposition~\ref{prop:Pautomatic}, with projections $P_m$ satisfying $C_P\le\sqrt2$.  We seek $u_m:I\to V_m$ with $u_m(a)=P_mu_0$ such that
\begin{equation}\label{eq:galerkin}
 (\Dg u_m(t),v)_H+a(t;\Sg u_m(t),v)=\ip{f(t)}v,
 \qquad v\in V_m,
\end{equation}
outside one null set.

\subsection{The finite-dimensional Stieltjes system}
For each fixed $m$ define $B_m(t):V_m\to V_m$ and $F_m(t)\in V_m$ by the $H$-Riesz representations
\begin{equation}\label{eq:BmFm}
 (B_m(t)x,y)_H=a(t;x,y),
 \qquad
 (F_m(t),y)_H=\ip{f(t)}y,
 \qquad x,y\in V_m.
\end{equation}
Because $V_m$ is finite dimensional, (A1) implies measurability of all matrix entries of $B_m$, and the coordinates of $F_m$ are measurable.  Moreover $F_m\in L_g^2(I;V_m)$.

Using $\Sg u_m=u_m+\dg\Dg u_m$, equation \eqref{eq:galerkin} is equivalent in $V_m$ to
\begin{equation}\label{eq:finiteimplicitderivative}
 (I+\dg(t)B_m(t))\Dg u_m(t)=F_m(t)-B_m(t)u_m(t).
\end{equation}
The next elementary estimate is the reason no smallness condition on a jump is required.

\begin{lemma}[Uniform invertibility of the atomic matrix]\label{lem:atomicmatrix}
For every $t\in I$ and every $\delta\ge0$, $I+\delta B_m(t):V_m\to V_m$ is invertible and
\begin{equation}\label{eq:inversecontractive}
 \norm{(I+\delta B_m(t))^{-1}}_{\mathcal L(H,H)}\le1,
\end{equation}
where the norm is restricted to $V_m$.
\end{lemma}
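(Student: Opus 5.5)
The plan is to deduce both invertibility and the contraction bound from one coercivity inequality in the $H$ inner product, restricted to $V_m$. Fix $t\in I$ and $\delta\ge0$, and take $x\in V_m$. By the definition \eqref{eq:BmFm} of $B_m(t)$ and by (A3),
\[
 \bigl((I+\delta B_m(t))x,x\bigr)_H=\norm{x}_H^2+\delta\,a(t;x,x)\ge\norm{x}_H^2+\delta\alpha\norm{x}_V^2\ge\norm{x}_H^2 .
\]
Cauchy--Schwarz in $H$ applied to the left-hand side then gives
\[
 \norm{(I+\delta B_m(t))x}_H\,\norm{x}_H\ge\norm{x}_H^2,
 \qquad\text{hence}\qquad
 \norm{(I+\delta B_m(t))x}_H\ge\norm{x}_H .
\]

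\emph{Invertibility.} The lower bound shows that $I+\delta B_m(t)$ has trivial kernel on $V_m$. Since $V_m$ is finite dimensional, injectivity already implies bijectivity, so the inverse exists as a linear map $V_m\to V_m$. Alternatively one can apply Lax--Milgram to the form $b(x,y)=(x,y)_H+\delta a(t;x,y)$ on the space $V_m$ equipped with the $H$ norm, but finite dimensionality makes that detour unnecessary.

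\emph{The bound \eqref{eq:inversecontractive}.} Given $y\in V_m$, put $x=(I+\delta B_m(t))^{-1}y$. The lower bound reads $\norm{y}_H\ge\norm{x}_H$, that is,
\[
 \norm{(I+\delta B_m(t))^{-1}y}_H\le\norm{y}_H .
\]
This is exactly the claim, with the operator norm computed with respect to $\norm{\cdot}_H$ on $V_m$.

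There is no real obstacle in this argument. Two points only need care. First, $B_m(t)$ need not be symmetric because $a$ is not assumed symmetric, so the bound cannot be read off from eigenvalues; the argument above avoids spectral reasoning and uses only the numerical-range inequality, which is valid for nonsymmetric operators. Second, the estimate holds for every $t$ and not merely for almost every $t$, because (A3) is assumed for a fixed representative at every $t\in I$. Finally, the bound is uniform in $\delta$, and this uniformity is what removes any smallness condition on the jump sizes.
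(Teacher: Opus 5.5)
Your proof is correct and follows the paper's argument: the coercivity inequality $\bigl((I+\delta B_m(t))x,x\bigr)_H\ge\norm{x}_H^2$ combined with Cauchy--Schwarz gives $\norm{x}_H\le\norm{(I+\delta B_m(t))x}_H$. Finite dimensionality then upgrades injectivity to bijectivity, and the same inequality yields the $H$-contraction bound for the inverse.
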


\begin{proof}
Let $y=(I+\delta B_m(t))x$.  Taking the $H$ inner product with $x$ and using coercivity gives
\[
 (y,x)_H
 =\norm x_H^2+\delta a(t;x,x)
 \ge\norm x_H^2.
\]
Cauchy--Schwarz yields $\norm y_H\norm x_H\ge\norm x_H^2$.  If $x\ne0$, division gives $\norm x_H\le\norm y_H$; if $x=0$ the same inequality is trivial.  Thus the operator is injective.  Since domain and codomain have the same finite dimension, it is bijective, and the same estimate gives \eqref{eq:inversecontractive}.
\end{proof}

Define
\begin{equation}\label{eq:Gm}
 G_m(t,x):=(I+\dg(t)B_m(t))^{-1}\bigl(F_m(t)-B_m(t)x\bigr).
\end{equation}
Then the Galerkin system is exactly the anterior Stieltjes integral equation
\begin{equation}\label{eq:PicardIntegral}
 u_m(t)=P_mu_0+\int_{[a,t)}G_m(r,u_m(r))\,d\mug(r).
\end{equation}
We prove global solvability directly rather than hiding it behind a generic finite-dimensional ODE citation.

\begin{lemma}[Finite-dimensional global solvability]\label{lem:finite}
For every $m$, equation \eqref{eq:galerkin} has a unique global solution $u_m\in AC_g^{\rm B}(I;V_m)$.
\end{lemma}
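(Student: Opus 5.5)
We reduce \eqref{eq:galerkin} to the integral equation \eqref{eq:PicardIntegral} and solve the latter by a Picard iteration in a weighted sup norm. This works for every finite mass $M_g$, with no smallness condition on any atom.

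First we record the equivalence. If $u_m\in AC_g^{\rm B}(I;V_m)$ satisfies \eqref{eq:galerkin} outside a null set, then $\Sg u_m=u_m+\dg\Dg u_m$ gives \eqref{eq:finiteimplicitderivative}. Lemma~\ref{lem:atomicmatrix} then yields $\Dg u_m=G_m(\cdot,u_m)$ $\mug$-a.e., hence \eqref{eq:PicardIntegral}. Conversely, a bounded measurable solution of \eqref{eq:PicardIntegral} lies in $AC_g^{\rm B}$ with density $G_m(\cdot,u_m)$, and reversing the algebra recovers \eqref{eq:galerkin} with one null set, because $V_m$ is finite dimensional.

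Next we need measurability and bounds for $G_m$. The entries of $B_m(t)$ are measurable, and $\norm{B_m(t)}\le M c_m$, where $c_m$ is a constant comparing the $V$ and $H$ norms on $V_m$. Inversion is continuous on invertible matrices, so $t\mapsto(I+\dg(t)B_m(t))^{-1}$ is measurable, and by \eqref{eq:inversecontractive} it is bounded by $1$ in $\mathcal L(H,H)$. Hence
\[
 \norm{G_m(t,x)-G_m(t,y)}_H\le L_m\norm{x-y}_H,\qquad \norm{G_m(t,0)}_H\le\norm{F_m(t)}_H ,
\]
with $L_m:=Mc_m$ and $F_m\in L_g^2\subset L_g^1$. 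Measurability of $r\mapsto G_m(r,u(r))$ for bounded measurable $u$ follows by Carathéodory-type composition.

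The main step is the contraction. Let $X$ be the space of bounded Borel $V_m$-valued functions on $I$, and let $T$ be the right-hand side of \eqref{eq:PicardIntegral}. The difficulty is that atoms destroy the classical exponential-weight argument if it is applied naively. We use instead the ordered-simplex estimate. By induction,
\[
 \norm{T^nu-T^nw}_{H}(t)\le L_m^n\norm{u-w}_\infty\,\mug^{\otimes n}\bigl(\{a\le r_1<\dots<r_n<t\}\bigr).
\]
Here the strict ordering is essential: it comes from the half-open interval $[a,t)$ in each primitive. We then bound the measure of the strictly ordered simplex by $M_g^n/n!$. The $n!$ permutations of a strictly ordered $n$-tuple are disjoint subsets of $[a,b)^n$, and the measure is symmetric, so $n!$ times the simplex measure is at most $M_g^n$. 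The diagonal carries mass when atoms are present, but it is excluded by strictness and therefore causes no trouble. This is the point analogous to Lemma~6.4 of \cite{FernandezLateral2026}.

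Consequently $T^n$ is a contraction for $n$ large, and Banach's theorem gives a unique fixed point in $X$. Any solution in $AC_g^{\rm B}$ is bounded by Proposition~\ref{prop:primitive}, so it lies in $X$, and uniqueness in $AC_g^{\rm B}$ follows. Alternatively, uniqueness follows from the same iterated estimate applied to the difference of two solutions.
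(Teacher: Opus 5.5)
Your proposal is correct and follows essentially the same route as the paper. You reduce to the anterior integral equation \eqref{eq:PicardIntegral}, use the affine Lipschitz bound from the $H$-contractivity of $(I+\dg B_m)^{-1}$, apply the strictly-ordered-simplex estimate $M_g^n/n!$ (valid with atoms because the diagonal is excluded), and obtain uniqueness from the same iterated bound. The only difference is packaging: you apply Banach's theorem to an iterate $T^n$ on bounded Borel functions with the sup norm, whereas the paper sums the Picard differences $d_n(b)\le K_m(L_mM_g)^n/n!$ directly. Two slips are harmless: the opening mention of a ``weighted sup norm'' is never used, and if $\norm v_V\le c_m\norm v_H$ the operator bound is $Mc_m^2$ rather than $Mc_m$.
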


\begin{proof}
Fix $m$.  Because inversion is continuous on the finite-dimensional open set of nonsingular matrices and Lemma~\ref{lem:atomicmatrix} ensures nonsingularity for every $t$, the matrix entries of $(I+\dg B_m)^{-1}$ are measurable.  Thus $t\mapsto G_m(t,x)$ is measurable for every $x$.  For almost every $t$, $x\mapsto G_m(t,x)$ is affine and hence continuous.

All norms on $V_m$ are equivalent.  In particular there is $c_m>0$ such that $\norm v_V\le c_m\norm v_H$ for $v\in V_m$.  From (A2), Lemma~\ref{lem:atomicmatrix}, and \eqref{eq:Gm},
\begin{align*}
 \norm{G_m(t,x)-G_m(t,y)}_H
 &\le\norm{B_m(t)(x-y)}_H\\
 &\le L_m\norm{x-y}_H
\end{align*}
with a constant $L_m<\infty$ independent of $t$ (but not of $m$).  Similarly
\[
 \norm{G_m(t,0)}_H\le C_m\norm{f(t)}_{V^*}
\]
for a fixed $C_m$, and the right-hand side belongs to $L_g^1(I)$ because $\mug(I)<\infty$ and $f\in L_g^2(V^*)$.

We now perform the Picard iteration globally on the finite measure interval:
\[
 U_0(t):=P_mu_0,
 \qquad
 U_{n+1}(t):=P_mu_0+\int_{[a,t)}G_m(r,U_n(r))\,d\mug(r).
\]
Set $d_n(t)=\sup_{a\le\tau\le t}\norm{U_{n+1}(\tau)-U_n(\tau)}_H$.  The first difference is bounded by some finite constant $K_m$ because $G_m(\cdot,P_mu_0)\in L_g^1$.  The Lipschitz bound yields inductively
\[
 d_n(t)\le L_m\int_{[a,t)}d_{n-1}(r)\,d\mug(r).
\]
Iterating this inequality gives an integral over the strictly ordered simplex
\[
 a\le r_n<r_{n-1}<\cdots<r_1<t.
\]
Its $n$-fold product measure is at most $M_g^n/n!$.  This estimate remains valid in the presence of atoms: atomic diagonals may have positive product measure, but the $n!$ disjoint strict-order regions are all contained in the full cube and have equal product measure by permutation symmetry, so each strict-order region has measure at most $M_g^n/n!$.  Consequently
\begin{equation}\label{eq:picardfactorial}
 d_n(b)\le K_m\frac{(L_mM_g)^n}{n!}.
\end{equation}
The series of successive differences is therefore uniformly summable, and $U_n$ converges uniformly in $H$ to a function $u_m:I\to V_m$.  Dominated convergence in the integral equation, using the affine growth of $G_m$, gives \eqref{eq:PicardIntegral}.  Hence $u_m\in AC_g^{\rm B}(I;V_m)$ and $\Dg u_m=G_m(\cdot,u_m)$ almost everywhere, which is equivalent to \eqref{eq:galerkin} by \eqref{eq:finiteimplicitderivative}.

For uniqueness, if $u_m$ and $\widetilde u_m$ solve \eqref{eq:PicardIntegral}, the same iterative estimate applied to their difference gives for every $n$,
\[
 \sup_t\norm{u_m(t)-\widetilde u_m(t)}_H
 \le D\frac{(L_mM_g)^n}{n!},
\]
where $D$ is the finite supremum of the difference.  Letting $n\to\infty$ gives $D=0$.
\end{proof}

For comparison, the scalar finite-dimensional theory in arXiv:2609.10659v1 proves the atom-compatible strict-simplex estimate in Lemma~6.4 and the corresponding global finite-dimensional existence and uniqueness theorem in Theorem~6.5, Section~6.2 \cite{FernandezLateral2026}. The proof above is written directly for the present Galerkin system and does not import that scalar theorem as a black box.

At an atom $r$, \eqref{eq:galerkin} has the explicit update
\begin{equation}\label{eq:galerkinatomicupdate}
 (I+\dg(r)B_m(r))u_m(r^+)
 =u_m(r)+\dg(r)F_m(r),
\end{equation}
so Lemma~\ref{lem:atomicmatrix} also displays directly the unconditional invertibility of every finite-dimensional jump step.

\subsection{Uniform estimates and compactness}
\begin{lemma}[Uniform Galerkin estimates]\label{lem:galbounds}
There exists a constant $C$, independent of $m$, such that
\begin{equation}\label{eq:galuniform}
 \sup_{t\in I}\norm{u_m(t)}_H^2
 +\sum_{r\in \Ag}\norm{\Delta u_m(r)}_H^2
 +\norm{\Sg u_m}_{L_g^2(V)}^2
 +\norm{\Dg u_m}_{L_g^2(V^*)}^2
 \le C.
\end{equation}
\end{lemma}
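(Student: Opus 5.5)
The plan is to derive a finite-dimensional energy identity for $u_m$, read off the first three bounds from it, and then recover the $V^*$ bound on $\Dg u_m$ by duality through the stable projections.

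\emph{Energy step.} Test \eqref{eq:galerkin} with $v=\Sg u_m(t)\in V_m$, which is admissible for $\mug$-a.e.\ $t$. This gives
\[
 (\Dg u_m,\Sg u_m)_H+a(t;\Sg u_m,\Sg u_m)=\ip{f}{\Sg u_m}.
\]
The first term is handled by Step~2 of the proof of Theorem~\ref{thm:LM}, i.e.\ \eqref{eq:LMm} applied to the $V_m$-valued function $u_m$. That identity uses only the finite-dimensional lateral product rule, so no projection argument is needed. Integrating over $[a,t)$ yields the analogue of \eqref{eq:energy} for $u_m$, with exact jump term $\frac12\sum_{r\in\Ag\cap[a,t)}\norm{\Delta u_m(r)}_H^2$.

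\emph{First three bounds.} Argue exactly as in Theorem~\ref{thm:energy}: use coercivity (A3), Young's inequality with parameter $\alpha$, and $\norm{P_mu_0}_H\le\norm{u_0}_H$. This bounds $\sup_t\norm{u_m(t)}_H^2$, the full jump sum and $\alpha\norm{\Sg u_m}_{L_g^2(V)}^2$ by $2\norm{u_0}_H^2+2\alpha^{-1}\norm f_{L_g^2(V^*)}^2$, uniformly in $m$.

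\emph{Derivative bound.} This is the only genuine obstacle, since $\Dg u_m$ is controlled by \eqref{eq:galerkin} only against test functions in $V_m$. Fix $v\in V$ with $\norm v_V\le1$. Because $\Dg u_m(t)\in V_m$, $H$-orthogonality gives $(\Dg u_m,v)_H=(\Dg u_m,P_mv)_H$. Applying \eqref{eq:galerkin} with test function $P_mv$, (A2) and \eqref{eq:P} then give
\[
 |(\Dg u_m(t),v)_H|\le\bigl(\norm{f(t)}_{V^*}+M\norm{\Sg u_m(t)}_V\bigr)C_P .
\]
Here we identify $\Dg u_m(t)\in H$ with its image in $V^*$. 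Taking the supremum over $v$ yields
\[
 \norm{\Dg u_m(t)}_{V^*}\le\sqrt2\bigl(\norm{f(t)}_{V^*}+M\norm{\Sg u_m(t)}_V\bigr)
\]
$\mug$-a.e., using $C_P\le\sqrt2$ from Proposition~\ref{prop:Pautomatic}. Squaring, integrating and inserting the previous bound on $\norm{\Sg u_m}_{L_g^2(V)}$ completes \eqref{eq:galuniform}, with $C$ depending only on $\norm{u_0}_H$, $\norm f_{L_g^2(V^*)}$, $\alpha$ and $M$.

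This is precisely where the uniform $V$-stability (P) of the chosen chain is used; without it the $V^*$ bound on $\Dg u_m$ would carry an $m$-dependent constant.
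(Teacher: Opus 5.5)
Your proposal is correct and follows the paper's proof essentially step for step. You test with $\Sg u_m$ and reuse the finite-dimensional identity from Step~2 of Theorem~\ref{thm:LM} for the first three terms, then use the $H$-orthogonality identity $(\Dg u_m,v)_H=(\Dg u_m,P_mv)_H$ together with the uniform projection bound $C_P\le\sqrt2$ to obtain the $V^*$ derivative bound.
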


\begin{proof}
Testing \eqref{eq:galerkin} with $v=\Sg u_m(t)\in V_m$ is legitimate. The finite-dimensional identity derived in Step 2 of Theorem~\ref{thm:LM}, followed by coercivity and Young's inequality, gives the same partial-in-time estimate as in Theorem~\ref{thm:energy}. Taking the trace supremum and, separately, evaluating at $t=b$ for the complete dissipative terms, then adding the two bounds gives
\begin{equation}\label{eq:galenergybound}
 \sup_t\norm{u_m(t)}_H^2
 +\sum_r\norm{\Delta u_m(r)}_H^2
 +\alpha\norm{\Sg u_m}_{L_g^2(V)}^2
 \le 2\norm{P_mu_0}_H^2+2\alpha^{-1}\norm f_{L_g^2(V^*)}^2.
\end{equation}
Since $P_m$ is an $H$-orthogonal projection, $\norm{P_mu_0}_H\le\norm{u_0}_H$.

For the derivative bound, fix $v\in V$.  Because $\Dg u_m(t)\in V_m\subset H$ and $v-P_mv$ is $H$-orthogonal to $V_m$,
\[
 \ip{\Dg u_m(t)}v=(\Dg u_m(t),P_mv)_H.
\]
Using \eqref{eq:galerkin} with $P_mv$ gives
\begin{align*}
 |\ip{\Dg u_m(t)}v|
 &\le\norm{f(t)}_{V^*}\norm{P_mv}_V
   +M\norm{\Sg u_m(t)}_V\norm{P_mv}_V\\
 &\le C_P\bigl(\norm{f(t)}_{V^*}+M\norm{\Sg u_m(t)}_V\bigr)\norm v_V.
\end{align*}
Taking the supremum over $\norm v_V\le1$ yields
\[
 \norm{\Dg u_m(t)}_{V^*}
 \le C_P\bigl(\norm{f(t)}_{V^*}+M\norm{\Sg u_m(t)}_V\bigr).
\]
Squaring, integrating, and using \eqref{eq:galenergybound} proves the final term in \eqref{eq:galuniform}.
\end{proof}

\begin{theorem}[Galerkin existence]\label{thm:galerkin}
Assume (A1)--(A3).  For every $u_0\in H$ and $f\in L_g^2(I;V^*)$, there exists a unique posterior variational solution $u\in\Wplus$ of \eqref{eq:problem}.  Moreover,
\begin{equation}\label{eq:galweakconv}
 \Sg u_m\rightharpoonup\Sg u\quad\text{in }L_g^2(I;V),
 \qquad
 \Dg u_m\rightharpoonup\Dg u\quad\text{in }L_g^2(I;V^*),
\end{equation}
for the full Galerkin sequence.
\end{theorem}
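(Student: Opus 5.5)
The plan is to pass to weak limits in the Galerkin system using Lemma~\ref{lem:galbounds}, identify the limit as a posterior variational solution through the weak--strong reconstruction of Theorem~\ref{thm:reconstruction}, and then get convergence of the full sequence from uniqueness (Corollary~\ref{cor:unique}).

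\emph{Extraction of limits.} By \eqref{eq:galuniform} and reflexivity of $L_g^2(I;V)$ and $L_g^2(I;V^*)$, some subsequence satisfies
\[
 \Sg u_m\rightharpoonup S \text{ in } L_g^2(I;V),\qquad \Dg u_m\rightharpoonup w \text{ in } L_g^2(I;V^*).
\]
Put
\[
 u:=u_0+\Pgminus w .
\]
Since $\Pgminus$ is bounded and linear (Lemma~\ref{lem:Ppm}), it is weakly continuous, so $u_m=P_mu_0+\Pgminus\Dg u_m\rightharpoonup u$ in $L_g^2(I;V^*)$. Here $P_mu_0\to u_0$ in $H$: the union of the $V_m$ is $V$-dense, hence $H$-dense. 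The same weak continuity applied to $\Sg u_m=P_mu_0+\Pgplus \Dg u_m$ gives $\Sg u_m\rightharpoonup u_0+\Pgplus w$ in $L_g^2(V^*)$. As $L_g^2(V)\hookrightarrow L_g^2(V^*)$ continuously, limits coincide, so
\[
 S=u_0+\Pgplus w=\Sg u .
\]
Thus $u\in\Wplus$, $u(a)=u_0$, $\Dg u=w$ and $\Sg u=S$. This identification of $\Sg u$ is the only delicate point: it works because $\Pgplus$ is a fixed bounded operator, which is exactly where the atoms are encoded. One could instead pass to the limit in \eqref{eq:weakX} with $\varphi\in\mathcal T_g$, but the operator form seems cleaner.

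\emph{Limit equation.} Fix $k$, $v\in V_k$ and $\psi\in L_g^\infty(I)$. For $m\ge k$, test \eqref{eq:galerkin} with $\psi(t)v$ and integrate to get
\[
 \int\psi\ip{\Dg u_m}{v}\,d\mug+\int\psi\,a(t;\Sg u_m,v)\,d\mug=\int\psi\ip{f}{v}\,d\mug .
\]
Both left-hand terms are continuous linear functionals of $(\Dg u_m,\Sg u_m)$. For the second this follows from Lemma~\ref{lem:Ameas} and (A2): $z\mapsto\int\psi\, a(t;z,v)\,d\mug$ is bounded on $L_g^2(V)$. Passing to the limit gives the same identity for $u$. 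Since $\psi$ is arbitrary, for each $v\in V_k$ we have
\[
 \ip{\Dg u(t)}{v}+a(t;\Sg u(t),v)=\ip{f(t)}{v}
\]
off a null set $N_v$. Take the countable union of these null sets over a countable $\mathbb{Q}$-span of the basis $(w_j)$, which is dense in $V$. By continuity in $v$ via (A2) (the representatives are fixed everywhere), the identity then holds for all $v\in V$ off one null set $N$. This gives \eqref{eq:variational-pointwise}. Note that atoms are never in $N$, consistent with Proposition~\ref{prop:atom}.

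\emph{Uniqueness and full sequence.} Corollary~\ref{cor:unique} gives uniqueness of $u$. Every subsequence of $(u_m)$ has a further subsequence whose weak limits are $(\Sg\tilde u,\Dg\tilde u)$ for some solution $\tilde u$. By uniqueness these limits equal $(\Sg u,\Dg u)$, so the whole sequence converges weakly as stated in \eqref{eq:galweakconv}. No strong compactness is needed anywhere, since the problem is linear. This is consistent with the paper's remark that Aubin--Lions is unnecessary.
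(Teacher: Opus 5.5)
Your proposal is correct and follows the paper's proof essentially step by step. It uses the same weak extraction from Lemma~\ref{lem:galbounds}, the same reconstruction of $u$ from the limit derivative with $\Sg u$ identified through weak continuity of the bounded operator $\Pgplus$, a countable-density argument after passing to the limit in the tested Galerkin equation, and full-sequence convergence from Corollary~\ref{cor:unique} via the subsequence principle. The only difference is minor: you test with a fixed $v\in V_k$ for $m\ge k$ instead of with $P_mv$, which removes the remainder $P_mv-v$ (and the appeal to Lemma~\ref{lem:Pstrong}) from that step, and you then run the density argument over the countable $\mathbb{Q}$-span of $\bigcup_kV_k$.
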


\begin{proof}
\emph{Step 1: extraction.}
The spaces $L_g^2(I;V)$ and $L_g^2(I;V^*)$ are Hilbert spaces and hence reflexive.  Lemma~\ref{lem:galbounds} therefore gives a subsequence, not relabelled, and functions
\[
 z\in L_g^2(I;V),
 \qquad d\in L_g^2(I;V^*)
\]
such that
\begin{equation}\label{eq:galextract}
 \Sg u_m\rightharpoonup z\quad\text{in }L_g^2(V),
 \qquad
 \Dg u_m\rightharpoonup d\quad\text{in }L_g^2(V^*).
\end{equation}

\emph{Step 2: reconstruction of the candidate state.}
Since $P_mu_0\to u_0$ strongly in $H$ and hence in $V^*$, define
\begin{equation}\label{eq:galcandidate}
 u(t):=u_0+\int_{[a,t)}d(r)\,d\mug(r).
\end{equation}
Then $u\in AC_g^{\rm B}(I;V^*)$ and $\Dg u=d$.  For every $m$,
\[
 \Sg u_m=P_mu_0+\Pgplus(\Dg u_m).
\]
The map $\Pgplus:L_g^2(V^*)\to L_g^2(V^*)$ is bounded linear by Lemma~\ref{lem:Ppm}; hence it is weak-to-weak continuous.  Passing to the limit in the previous identity gives
\[
 \Sg u_m\rightharpoonup u_0+\Pgplus d=\Sg u
 \quad\text{in }L_g^2(V^*).
\]
On the other hand, the first convergence in \eqref{eq:galextract}, composed with the continuous embedding $V\hookrightarrow V^*$, gives $\Sg u_m\rightharpoonup z$ in $L_g^2(V^*)$.  Weak limits are unique, so
\[
 z=\Sg u\quad\text{in }L_g^2(V^*).
\]
Since $z\in L_g^2(V)$, this proves $\Sg u\in L_g^2(V)$ and hence $u\in\Wplus$.

\emph{Step 3: passage to the variational equation.}
Fix $v\in V$ and $\eta\in L_g^\infty(I)$.  In the Galerkin equation use the admissible test vector $P_mv$ and multiply by $\eta(t)$.  Integration gives
\begin{align}\label{eq:galintegrated}
 \int\eta\ip{\Dg u_m}{P_mv}\,d\mug
 +\int\eta a(t;\Sg u_m,P_mv)\,d\mug
 =\int\eta\ip f{P_mv}\,d\mug.
\end{align}
Because $P_mv\to v$ strongly in $V$, the first term converges to $\int\eta\ip d v$.  To see this without suppressing a remainder, write
\begin{align*}
 \int\eta\ip{\Dg u_m}{P_mv}-\int\eta\ip d v
 &=\int\eta\ip{\Dg u_m-d}{v}
   +\int\eta\ip{\Dg u_m}{P_mv-v}.
\end{align*}
The first term tends to zero by weak convergence in $L_g^2(V^*)$; the second is bounded by
\[
 \norm\eta_\infty\norm{\Dg u_m}_{L_g^2(V^*)}
 \norm{P_mv-v}_{L_g^2(V)},
\]
and the last factor is $M_g^{1/2}\norm{P_mv-v}_V\to0$.

For the form term, split similarly:
\begin{align*}
 &\int\eta a(t;\Sg u_m,P_mv)-\int\eta a(t;\Sg u,v)\\
 &=\int\eta a(t;\Sg u_m-\Sg u,v)
   +\int\eta a(t;\Sg u_m,P_mv-v).
\end{align*}
The first term tends to zero because, for fixed $v$ and bounded $\eta$, the function $t\mapsto\eta(t)A(t)^*v$ represents a bounded element of the dual of $L_g^2(V)$; equivalently one can use the bilinear form bound directly.  The second tends to zero by (A2), the uniform $L_g^2(V)$ bound, and $P_mv\to v$ in $V$.  The forcing term is treated in the same way.  Thus \eqref{eq:galintegrated} tends to
\begin{equation}\label{eq:limitintegratedv}
 \int_I\eta(t)\Bigl[\ip{d(t)}v+a(t;\Sg u(t),v)-\ip{f(t)}v\Bigr]d\mug(t)=0.
\end{equation}

Choose a countable dense set $\{v_j\}_{j\ge1}\subset V$.  For each $j$, the scalar bracket in \eqref{eq:limitintegratedv} vanishes almost everywhere because \eqref{eq:limitintegratedv} holds for all bounded scalar $\eta$.  The union of the corresponding null sets is still null.  Outside this common null set the identity holds for every $v_j$.  Since, for fixed $t$, each of the three terms depends continuously on $v\in V$, density extends the identity to all $v\in V$.  Therefore $u$ is a posterior variational solution.

\emph{Step 4: initial value and traces.}
The definition \eqref{eq:galcandidate} gives $u(a)=u_0$ exactly. Proposition~\ref{prop:Pautomatic} supplies the stable chain used in Theorem~\ref{thm:LM}; hence that theorem yields the $H$-valued lateral traces, and Theorem~\ref{thm:energy} yields the exact energy identity.

\emph{Step 5: uniqueness and convergence of the full sequence.}
Corollary~\ref{cor:unique} shows that every subsequential limit obtained from the uniform bounds is the same solution $u$.  To prove convergence of the whole sequence, suppose for contradiction that $\Sg u_m$ does not converge weakly to $\Sg u$ in $L_g^2(V)$.  Then some weak neighborhood $\mathcal U$ of $\Sg u$ excludes a subsequence $(\Sg u_{m_k})$.  The uniform bound permits extraction of a further weakly convergent subsequence.  Repeating Steps 1--4 identifies its limit as the posterior state of a solution of the same problem, hence as $\Sg u$ by uniqueness, contradicting exclusion from $\mathcal U$.  The argument for $\Dg u_m$ is identical, or follows simultaneously from the paired subsequence extraction.  Therefore \eqref{eq:galweakconv} holds for the full sequence.
\end{proof}

The proof architecture has a classical Galerkin analogue in Roub\'i\v{c}ek \cite[Section 8.4, pp.~240--246]{Roubicek2013}, but the present proof contains two genuinely Stieltjes-specific ingredients: the atomic matrix \eqref{eq:galerkinatomicupdate} and the posterior-state identification through the bounded operator $\Pgplus$.

\section{Existence by a Stieltjes-adapted Rothe scheme}\label{sec:rothe}
The Rothe method discretizes the Stieltjes measure rather than the spatial space.  Classical continuous-time Rothe semidiscretization is described by Ka\v{c}ur \cite[pp.~23--34]{Kacur1986} and Roub\'i\v{c}ek \cite[Section 8.2, pp.~215--229]{Roubicek2013}.  Here the partition must simultaneously resolve Euclidean position, Stieltjes mass, and persistent atoms.

\subsection{Partitions adapted to mass and atoms}
The partition must control both Euclidean diameter and Stieltjes mass. A short interval can still contain an atom of fixed positive mass, while a small-mass cell need not be geometrically local. Thus persistent atoms are isolated and both quantities are controlled on residual cells. If there are no atoms, this reduces to an ordinary refining mesh (and to the classical Rothe mesh for $g(t)=t$). With finitely many atoms all may be singleton cells. With countably many atoms, level $n$ isolates only those with mass $>\varepsilon_n$; there are finitely many, and every fixed atom is eventually isolated because $\varepsilon_n\downarrow0$.

\begin{lemma}[Mass--atom partitions]\label{lem:partitions}
There are numbers $\rho_n\downarrow0$, finite ordered families of pairwise disjoint Borel cells $C_{n,1},\ldots,C_{n,K_n}$ of positive mass
\[
 h_{n,k}:=\mug(C_{n,k})>0,
\]
and Borel null sets $Z_n$ such that:
\begin{enumerate}[label=(\roman*)]
\item $[a,b)=Z_n\cup\bigcup_{k=1}^{K_n}C_{n,k}$;
\item every atom with mass greater than $\varepsilon_n\downarrow0$ is a singleton cell;
\item every non-singleton cell satisfies $h_{n,k}\le\rho_n$;
\item the Euclidean diameter of every non-singleton cell tends to zero uniformly in $k$;
\item each fixed atom is a singleton cell for all sufficiently large $n$.
\end{enumerate}
\end{lemma}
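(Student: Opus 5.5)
The plan is an explicit construction level by level. Fix sequences $\varepsilon_n\downarrow0$ and $\rho_n\downarrow0$, and set $\eta_n:=(b-a)/n$. Let $F_n:=\{r\in\Ag:\dg(r)>\varepsilon_n\}$. Since $\sum_{r\in\Ag}\dg(r)\le M_g<\infty$, each $F_n$ is finite. Because the sequence $\varepsilon_n$ decreases, $F_n\subset F_{n+1}$, and $\bigcup_nF_n=\Ag$. Every $r\in F_n$ becomes a singleton cell $\{r\}$, with mass $\dg(r)>0$. This gives (ii) and (v) immediately: a fixed atom $r$ has $\dg(r)>\varepsilon_n$ for all large $n$.

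The residual set $R_n:=[a,b)\setminus F_n$ is cut into pieces that are small both geometrically and in mass. First take the Euclidean mesh $a=t_0<t_1<\cdots<t_{N}=b$ with step at most $\eta_n$. Then refine each $[t_{i-1},t_i)$ in mass as follows. The residual measure $\nu_n:=\mug\llcorner R_n$ has only atoms of mass at most $\varepsilon_n$. Using the distribution function $s\mapsto\nu_n([t_{i-1},s))$, choose points in $[t_{i-1},t_i)$ at which this function crosses the successive levels $j\rho_n/2$. Because every atom of $\nu_n$ has mass at most $\varepsilon_n$, each resulting interval (intersected with $R_n$) has $\nu_n$-mass at most $\rho_n/2+\varepsilon_n$. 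I will therefore arrange $\varepsilon_n\le\rho_n/2$, for example by taking $\rho_n:=\max(2\varepsilon_n,1/n)$. This gives (iii).

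The cells obtained this way are Borel, pairwise disjoint, and naturally ordered by position. Each lies in an interval of length at most $\eta_n\to0$, so (iv) holds. The cells of zero mass are discarded and their union is put into $Z_n$; this is a finite union of null Borel sets, hence null, which gives (i) and the positivity $h_{n,k}>0$.

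The main obstacle is the mass refinement in the presence of small atoms: the distribution function can jump, so a cut cannot hit the level exactly. I will handle this by cutting at the first point where the left-closed mass reaches the next level, which overshoots by at most one atom of size $\le\varepsilon_n$. This is exactly why the isolation threshold $\varepsilon_n$ must be tied to $\rho_n$. Some care is also needed because removing the finitely many isolated atoms makes the residual cells intervals minus finite sets. They are still Borel, and their diameter is bounded by that of the containing mesh interval.

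If a nested refinement is desired later, I would include the previous level's cut points in the mesh. Nestedness is not required by the statement, however, so it would be optional.
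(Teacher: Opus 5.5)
Your construction is essentially the paper's. You isolate the finitely many atoms of mass $>\varepsilon_n$ as singletons, superimpose a Euclidean mesh, and cut greedily in residual mass so that each piece overshoots its level by at most one residual atom of mass $\le\varepsilon_n$. Zero-mass pieces go into $Z_n$. The paper cuts at level $\varepsilon_n$ and takes $\rho_n=2\varepsilon_n$; your levels $j\rho_n/2$ with $\rho_n=\max(2\varepsilon_n,1/n)$ are equivalent bookkeeping. Your verification of (i)--(v) is fine.

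One point needs repair. You do not put the points of $F_n$ into the mesh, so a large atom $r\in F_n$ can lie strictly inside a mass piece. The residual cell is then $C=[c,d)\setminus\{r\}$ with $c<r<d$. Then the claim that the cells are ``naturally ordered by position'' fails, because the singleton $\{r\}$ sits inside the convex hull of another cell. If ``ordered'' only meant ``indexed'', (i)--(v) would still hold as stated. But the lemma feeds the Rothe scheme, where the recursion $U_{n,k-1}\mapsto U_{n,k}$ and the identity $\Sg u_n(t)=U_{n,k-1}+m_{n,k}(t)d_{n,k}$ for $t\in C_{n,k}$ need more. They require $[a,t)$ to coincide, up to a $\mug$-null set, with $C_{n,1}\cup\cdots\cup C_{n,k-1}\cup(C_{n,k}\cap[a,t))$. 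Suppose both $C\cap[c,r)$ and $C\cap(r,d)$ carry mass. If $\{r\}$ is indexed before $C$, the identity fails for $t\in C\cap[c,r)$. If it is indexed after $C$, it fails for $t\in C\cap(r,d)$. In both cases the failure occurs on a set of positive mass, so no enumeration works. The fix is the paper's: insert all points of $F_n$ as mandatory cuts in the Euclidean mesh before the mass refinement. Then every residual piece is an interval, open at its left end when that end is an isolated atom. No residual cell surrounds a singleton, and the whole family is ordered left to right.
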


\begin{proof}
Fix sequences $\varepsilon_n\downarrow0$ and $\tau_n\downarrow0$.  Let
\[
 A_n:=\{r\in \Ag:\dg(r)>\varepsilon_n\}.
\]
This set is finite. Indeed, if $F\subset A_n$ is any finite subset, then
\[
 \#F\,\varepsilon_n
 <\sum_{r\in F}\dg(r)\le M_g,
\]
so every finite subset has cardinality $<M_g/\varepsilon_n$; hence $A_n$ itself is finite (with the empty case trivial). Declare each $\{r\}$ with $r\in A_n$ to be a singleton cell. These are the large atoms.

Remove their atomic masses and consider the residual finite measure
\[
 \nu_n:=\mug-\sum_{r\in A_n}\dg(r)\delta_r.
\]
Every atom of $\nu_n$ has mass at most $\varepsilon_n$.  Insert as mandatory Euclidean cuts all points of $A_n$, together with a finite ordinary mesh of $[a,b]$ whose mesh size is at most $\tau_n$.  At this stage the singleton sets $\{r\}$, $r\in A_n$, have already been removed from the residual part. Splitting the complement at the same cut points produces finitely many pairwise disjoint semi-open residual intervals $J=[c,d)$ (with the evident endpoint modifications) whose union, together with the declared singletons, covers $[a,b)$ up to no points at all. In particular, no residual cell can overlap a singleton atomic cell, and no large atom lies in the interior of a residual interval.

Fix one such $J$ and write
\[
 F_J(x):=\nu_n([c,x)),\qquad c\le x\le d.
\]
Starting at $c$, introduce a cut whenever the accumulated residual mass first reaches $\varepsilon_n$. If the threshold is crossed at a residual atom $r$, include the singleton $\{r\}$ in the current measurable piece (that is, close the current piece at $r$ in the measure-theoretic partition) and continue with the part to the right of $r$; no real successor of $r$ is assumed. Since every residual atom has mass at most $\varepsilon_n$, the mass collected in any resulting non-singleton piece is at most $2\varepsilon_n$.  The last remainder has mass less than $\varepsilon_n$.  Because $\nu_n(J)<\infty$, only finitely many such cuts occur in $J$.  Repeating this on the finitely many initial intervals produces a finite family.

Any piece of zero $\mug$-mass is removed from the list and absorbed into $Z_n$; the union of finitely many such pieces is still null.  The remaining cells have positive mass and are naturally ordered from left to right.  Set $\rho_n:=2\varepsilon_n$.  Then (iii) holds.  The preliminary ordinary mesh gives (iv).  Finally, if $r$ is a fixed atom, then $\dg(r)>0$, so $\dg(r)>\varepsilon_n$ for all sufficiently large $n$; hence $r\in A_n$ eventually and (v) follows.
\end{proof}

\subsection{The implicit Rothe step}
For every positive-mass cell define the averaged data
\begin{equation}\label{eq:averageddata}
 f_{n,k}:=\frac1{h_{n,k}}\int_{C_{n,k}}f(t)\,d\mug(t),
 \qquad
 a_{n,k}(u,v):=\frac1{h_{n,k}}\int_{C_{n,k}}a(t;u,v)\,d\mug(t).
\end{equation}
The averaged form inherits the same constants:
\[
 |a_{n,k}(u,v)|\le M\norm u_V\norm v_V,
 \qquad
 a_{n,k}(v,v)\ge\alpha\norm v_V^2.
\]
Set $U_{n,0}=u_0$. Here $h_{n,k}=\mug(C_{n,k})$ is the amount of Stieltjes time carried by the cell, and the outgoing state is used implicitly on that cell. Recursively determine $U_{n,k}\in V$ from
\begin{equation}\label{eq:rothestep}
 \left(\frac{U_{n,k}-U_{n,k-1}}{h_{n,k}},v\right)_H
 +a_{n,k}(U_{n,k},v)
 =\ip{f_{n,k}}v,
 \qquad v\in V.
\end{equation}

Equivalently,
\[
 (U_{n,k},v)_H+h_{n,k}a_{n,k}(U_{n,k},v)
 =(U_{n,k-1},v)_H+h_{n,k}\ip{f_{n,k}}v,
\]
so every step is implicit. For $g(t)=t$, $h_{n,k}=t_k-t_{k-1}$ and this is the standard backward-Euler/Rothe step. If $C_{n,k}=\{r\}$ is a singleton atom, then $h_{n,k}=\Delta g(r)$ and the same equation is exactly the posterior variational resolvent. For countably many atoms every fixed atom eventually appears in this exact form by Lemma~\ref{lem:partitions}.

\begin{lemma}[Solvability of every Rothe step]\label{lem:rothestep}
For every $n,k$, equation \eqref{eq:rothestep} has a unique solution $U_{n,k}\in V$.
\end{lemma}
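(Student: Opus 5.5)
The plan is to proceed by induction on $k$, applying the Lax--Milgram theorem at each step exactly as in the proof of Proposition~\ref{prop:atom}. Suppose $U_{n,k-1}\in H$ is already available; this holds for $k=1$ since $U_{n,0}=u_0\in H$, and for $k\ge2$ because $U_{n,k-1}\in V\hookrightarrow H$. Multiplying \eqref{eq:rothestep} by $h_{n,k}>0$ gives the equivalent form displayed after it. We therefore define on $V\times V$
\[
 b_{n,k}(w,v):=(w,v)_H+h_{n,k}\,a_{n,k}(w,v),
 \qquad
 \ell_{n,k}(v):=(U_{n,k-1},v)_H+h_{n,k}\ip{f_{n,k}}v .
\]

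The first task is to make the averaged objects in \eqref{eq:averageddata} well defined. For fixed $u,v$, the map $t\mapsto a(t;u,v)$ is $\mug$-measurable by (A1) and bounded by $M\norm u_V\norm v_V$ by (A2). Since $h_{n,k}>0$ is finite, $a_{n,k}(u,v)$ is a well-defined real number. Bilinearity passes through the integral, and averaging preserves both the bound (A2) and coercivity (A3), as already noted after \eqref{eq:averageddata}.

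For $f_{n,k}$, note that $f\in L_g^2(I;V^*)\subset L_g^1(I;V^*)$ because $\mug$ is finite. Hence the Bochner average $f_{n,k}$ exists in $V^*$, with $\norm{f_{n,k}}_{V^*}\le h_{n,k}^{-1/2}\norm f_{L_g^2(V^*)}$ by Cauchy--Schwarz.

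Next we check the Lax--Milgram hypotheses. Continuity is
\[
 |b_{n,k}(w,v)|\le (C_{VH}^2+h_{n,k}M)\norm w_V\norm v_V ,
\]
and coercivity is
\[
 b_{n,k}(v,v)\ge \norm v_H^2+h_{n,k}\alpha\norm v_V^2\ge h_{n,k}\alpha\norm v_V^2 .
\]
The functional $\ell_{n,k}$ belongs to $V^*$, with norm at most $C_{VH}\norm{U_{n,k-1}}_H+h_{n,k}\norm{f_{n,k}}_{V^*}$.

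Brezis' form of Lax--Milgram \cite[Corollary 5.8]{Brezis2011} then gives a unique $U_{n,k}\in V$ with $b_{n,k}(U_{n,k},v)=\ell_{n,k}(v)$ for all $v\in V$. Dividing by $h_{n,k}$ recovers \eqref{eq:rothestep}, and the same division shows that uniqueness transfers back to \eqref{eq:rothestep}. This closes the induction.

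No step is a serious obstacle. The only point needing care is the well-definedness of the averaged data, in particular the Bochner integrability of $f$ on each cell and the measurability needed to average $a$. Both are guaranteed by positivity and finiteness of $h_{n,k}$ from Lemma~\ref{lem:partitions}. The argument makes no smallness assumption on $h_{n,k}$, so singleton atomic cells with large mass are covered as well.
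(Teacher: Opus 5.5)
Your proposal is correct and follows the paper's argument: you multiply by $h_{n,k}$, apply Lax--Milgram to the form $(w,v)_H+h_{n,k}a_{n,k}(w,v)$, which is coercive with constant $h_{n,k}\alpha$, and then divide back. The explicit induction on $k$ (giving $U_{n,k-1}\in H$) and your checks that $a_{n,k}$ and $f_{n,k}$ are well defined are sound; the paper leaves these details implicit.
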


\begin{proof}
Multiplying \eqref{eq:rothestep} by $h_{n,k}$ gives
\[
 (U_{n,k},v)_H+h_{n,k}a_{n,k}(U_{n,k},v)
 =(U_{n,k-1},v)_H+h_{n,k}\ip{f_{n,k}}v.
\]
Define
\[
 b_{n,k}(u,v):=(u,v)_H+h_{n,k}a_{n,k}(u,v).
\]
It is continuous on $V\times V$, and
\[
 b_{n,k}(v,v)
 =\norm v_H^2+h_{n,k}a_{n,k}(v,v)
 \ge h_{n,k}\alpha\norm v_V^2.
\]
The right-hand side is a bounded linear functional on $V$.  Lax--Milgram \cite[Corollary 5.8, p.~140]{Brezis2011} therefore gives one and only one $U_{n,k}\in V$.
\end{proof}

Define
\[
 d_{n,k}:=\frac{U_{n,k}-U_{n,k-1}}{h_{n,k}}.
\]
Let $\bar U_n$ and $d_n$ be the cellwise constant functions equal to $U_{n,k}$ and $d_{n,k}$ on $C_{n,k}$, and set both functions equal to zero on $Z_n$.

\begin{lemma}[Discrete energy and derivative estimates]\label{lem:rothebounds}
There is $C$, independent of $n$, such that
\begin{align}
 \sup_{0\le k\le K_n}\norm{U_{n,k}}_H^2
 &+\sum_{k=1}^{K_n}\norm{U_{n,k}-U_{n,k-1}}_H^2
 +\alpha\norm{\bar U_n}_{L_g^2(V)}^2\notag\\
 &\le 2\norm{u_0}_H^2+2\alpha^{-1}\norm f_{L_g^2(V^*)}^2,\label{eq:rotheE}\\
 \norm{d_n}_{L_g^2(V^*)}&\le C.\label{eq:rotheD}
\end{align}
\end{lemma}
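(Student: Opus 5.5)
We test the Rothe step \eqref{eq:rothestep} with $v=U_{n,k}$ and mimic the continuous energy identity of Theorem~\ref{thm:energy}. The discrete analogue of the lateral product formula is the elementary polarization identity
\[
 (U_{n,k}-U_{n,k-1},U_{n,k})_H=\tfrac12\norm{U_{n,k}}_H^2-\tfrac12\norm{U_{n,k-1}}_H^2+\tfrac12\norm{U_{n,k}-U_{n,k-1}}_H^2 .
\]
This makes the jump sum in \eqref{eq:rotheE} appear with its exact coefficient. After multiplying \eqref{eq:rothestep} by $h_{n,k}$, coercivity of the averaged form gives $h_{n,k}a_{n,k}(U_{n,k},U_{n,k})\ge\alpha h_{n,k}\norm{U_{n,k}}_V^2$.

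For the forcing term, Bochner averaging together with Cauchy--Schwarz (Jensen) on the cell gives
\[
 h_{n,k}\norm{f_{n,k}}_{V^*}^2\le\int_{C_{n,k}}\norm f_{V^*}^2\,d\mug .
\]
Young's inequality with parameter $\alpha$ then bounds $h_{n,k}\ip{f_{n,k}}{U_{n,k}}$ by
\[
 \tfrac1{2\alpha}\int_{C_{n,k}}\norm f_{V^*}^2\,d\mug+\tfrac\alpha2h_{n,k}\norm{U_{n,k}}_V^2 .
\]
We absorb the last term, multiply by two, and sum over $k=1,\dots,j$, using that the cells are disjoint. This yields, for every $j$,
\[
 \norm{U_{n,j}}_H^2+\sum_{k\le j}\norm{U_{n,k}-U_{n,k-1}}_H^2+\alpha\sum_{k\le j}h_{n,k}\norm{U_{n,k}}_V^2\le\norm{u_0}_H^2+\alpha^{-1}\norm f_{L_g^2(V^*)}^2 .
\]
Since $\bar U_n$ vanishes on the null set $Z_n$, the coercive sum at $j=K_n$ equals $\alpha\norm{\bar U_n}_{L_g^2(V)}^2$. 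Exactly as in the proof of Theorem~\ref{thm:energy}, we take the supremum over $j$ for the first term, evaluate at $j=K_n$ for the other two, and add the two bounds. This gives \eqref{eq:rotheE} with the factor $2$.

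For \eqref{eq:rotheD} we use \eqref{eq:rothestep} directly, with no projection needed: for $v\in V$,
\[
 |\ip{d_{n,k}}v|\le\bigl(\norm{f_{n,k}}_{V^*}+M\norm{U_{n,k}}_V\bigr)\norm v_V ,
\]
where $d_{n,k}\in V\subset V^*$ and the pairing coincides with the $H$ inner product. Hence $\norm{d_{n,k}}_{V^*}\le\norm{f_{n,k}}_{V^*}+M\norm{U_{n,k}}_V$. We square this, multiply by $h_{n,k}$, sum, and reuse the Jensen bound for $f_{n,k}$ together with \eqref{eq:rotheE}. This gives
\[
 \norm{d_n}_{L_g^2(V^*)}^2\le 2\norm f_{L_g^2(V^*)}^2+2M^2\norm{\bar U_n}_{L_g^2(V)}^2\le C,
\]
with $C$ depending only on $\norm{u_0}_H$, $\norm f$, $\alpha$ and $M$, and not on $n$.

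The argument is essentially routine. The one point needing care is the Jensen-type averaging bound for $f_{n,k}$, which is where the uniformity in $n$ comes from. It requires cells of positive mass, which is guaranteed by Lemma~\ref{lem:partitions}, and the fact that the Bochner average of $f$ over a cell satisfies $\norm{f_{n,k}}_{V^*}\le h_{n,k}^{-1}\int_{C_{n,k}}\norm f_{V^*}\,d\mug$. Note also that the step uses no smallness of $h_{n,k}$, so singleton atomic cells of arbitrary mass are handled identically.
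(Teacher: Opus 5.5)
Your proposal is correct and follows the paper's proof essentially step by step: test with $U_{n,k}$, apply the polarization identity, coercivity, Young's inequality and cellwise Jensen, telescope, add the supremum bound to the $j=K_n$ bound, and estimate $\norm{d_{n,k}}_{V^*}\le\norm{f_{n,k}}_{V^*}+M\norm{U_{n,k}}_V$. One small correction: when $u_0\notin V$, the increment $d_{n,1}=(U_{n,1}-u_0)/h_{n,1}$ lies only in $H$, not in $V$, but $H\hookrightarrow V^*$ with the pairing given by the $H$ inner product is all your derivative estimate uses.
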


\begin{proof}
Test \eqref{eq:rothestep} with $v=U_{n,k}$ and multiply by $h_{n,k}$.  The Hilbert identity
\[
 (x-y,x)_H=\frac12\bigl(\norm x_H^2-\norm y_H^2+\norm{x-y}_H^2\bigr)
\]
gives
\begin{align*}
 &\frac12\Bigl(\norm{U_{n,k}}_H^2-\norm{U_{n,k-1}}_H^2
 +\norm{U_{n,k}-U_{n,k-1}}_H^2\Bigr)\\
 &\qquad+h_{n,k}a_{n,k}(U_{n,k},U_{n,k})
 =h_{n,k}\ip{f_{n,k}}{U_{n,k}}.
\end{align*}
Coercivity and Young's inequality yield
\[
 h_{n,k}\ip{f_{n,k}}{U_{n,k}}
 \le\frac{h_{n,k}}{2\alpha}\norm{f_{n,k}}_{V^*}^2
 +\frac{\alpha h_{n,k}}2\norm{U_{n,k}}_V^2.
\]
After absorption and summation over $k=1,\ldots,j$, the $H$ terms telescope.  We obtain
\begin{align*}
 \norm{U_{n,j}}_H^2
 &+\sum_{k=1}^j\norm{U_{n,k}-U_{n,k-1}}_H^2
 +\alpha\sum_{k=1}^jh_{n,k}\norm{U_{n,k}}_V^2\\
 &\le\norm{u_0}_H^2
 +\alpha^{-1}\sum_{k=1}^jh_{n,k}\norm{f_{n,k}}_{V^*}^2.
\end{align*}
Jensen's inequality for Bochner averages gives
\[
 h_{n,k}\norm{f_{n,k}}_{V^*}^2
 \le\int_{C_{n,k}}\norm{f(t)}_{V^*}^2\,d\mug(t).
\]
For each $j$, this partial estimate controls $\|U_{n,j}\|_H^2$ by the full-data right-hand side. Taking the supremum in $j$ gives one copy of that bound. Taking $j=K_n$ controls the complete increment sum and coercive term by a second copy. Adding the two estimates proves \eqref{eq:rotheE}.

Let $A_{n,k}:V\to V^*$ be associated with $a_{n,k}$.  Equation \eqref{eq:rothestep} is the $V^*$ identity
\[
 d_{n,k}+A_{n,k}U_{n,k}=f_{n,k}.
\]
Therefore
\[
 \norm{d_{n,k}}_{V^*}
 \le\norm{f_{n,k}}_{V^*}+M\norm{U_{n,k}}_V.
\]
Squaring, multiplying by $h_{n,k}$, summing, and using \eqref{eq:rotheE} and Jensen gives \eqref{eq:rotheD}.
\end{proof}

\subsection{Exact primitive reconstruction and posterior consistency}
Define the exact Stieltjes primitive generated by the discrete derivative,
\begin{equation}\label{eq:unprimitive}
 u_n(t):=u_0+\int_{[a,t)}d_n(s)\,d\mug(s).
\end{equation}
If $t\in C_{n,k}$, let
\[
 m_{n,k}(t):=\mug\bigl(C_{n,k}\cap(-\infty,t]\bigr).
\]
Because the preceding full cells contribute exactly $U_{n,k-1}-u_0$, the posterior primitive satisfies
\begin{equation}\label{eq:posteriorcell}
 \Sg u_n(t)=U_{n,k-1}+m_{n,k}(t)d_{n,k}.
\end{equation}
Since $U_{n,k}=U_{n,k-1}+h_{n,k}d_{n,k}$,
\begin{equation}\label{eq:posteriorerror}
 \Sg u_n(t)-\bar U_n(t)
 =-\bigl(h_{n,k}-m_{n,k}(t)\bigr)d_{n,k}.
\end{equation}
If $C_{n,k}=\{r\}$ is a singleton atomic cell, then $m_{n,k}(r)=h_{n,k}$ and the error is exactly zero.  This exactness on resolved atoms is the reason for isolating them in Lemma~\ref{lem:partitions}.

\begin{lemma}[Posterior consistency]\label{lem:consistency}
\[
 \norm{\Sg u_n-\bar U_n}_{L_g^2(V^*)}\longrightarrow0.
\]
\end{lemma}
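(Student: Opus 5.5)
The plan is to bound the error cellwise, using the explicit formula \eqref{eq:posteriorerror}, and then sum with the weights $h_{n,k}$ against the discrete derivative estimate \eqref{eq:rotheD}. Up to the $\mug$-null set $Z_n$, the cells $C_{n,k}$ cover $[a,b)$ by Lemma~\ref{lem:partitions}(i). Hence
\[
 \norm{\Sg u_n-\bar U_n}_{L_g^2(V^*)}^2
 =\sum_{k=1}^{K_n}\int_{C_{n,k}}\bigl(h_{n,k}-m_{n,k}(t)\bigr)^2\norm{d_{n,k}}_{V^*}^2\,d\mug(t).
\]
Before using this I would check that \eqref{eq:posteriorcell} holds $\mug$-a.e.\ on each cell. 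The cells are ordered left to right, so all earlier cells lie in $[a,t)$ when $t\in C_{n,k}$, and they contribute $\sum_{j<k}h_{n,j}d_{n,j}=U_{n,k-1}-u_0$. The part of $C_{n,k}$ in $[a,t]$ then contributes $m_{n,k}(t)d_{n,k}$, because $\Sg=\Pgplus$ on the derivative and $\Pgplus$ integrates over the closed interval $[a,t]$. The null pieces absorbed into $Z_n$, where $d_n=0$, contribute nothing.

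I would then split the sum by cell type. On a singleton atomic cell $\{r\}$ one has $m_{n,k}(r)=h_{n,k}$, so that term vanishes identically, as already observed after \eqref{eq:posteriorerror}. On a non-singleton cell, $m_{n,k}(t)$ is the $\mug$-measure of a subset of $C_{n,k}$, so $0\le h_{n,k}-m_{n,k}(t)\le h_{n,k}\le\rho_n$ by Lemma~\ref{lem:partitions}(iii). This bound does not care whether the cell contains small residual atoms. Therefore
\[
 \norm{\Sg u_n-\bar U_n}_{L_g^2(V^*)}^2
 \le\rho_n^2\sum_{k}h_{n,k}\norm{d_{n,k}}_{V^*}^2
 =\rho_n^2\norm{d_n}_{L_g^2(V^*)}^2\le\rho_n^2C^2,
\]
by \eqref{eq:rotheD}. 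This tends to zero because $\rho_n\downarrow0$.

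I expect the only delicate point to be the justification of \eqref{eq:posteriorcell} on cells whose internal geometry is irregular, for instance when a cell was closed at a residual atom. The first thing I would try is to note that only the inequality $m_{n,k}\le h_{n,k}$ and the exactness on singleton atomic cells are actually used. Both follow directly from $\Sg u_n=u_0+\Pgplus d_n$ and the fact that $d_n$ is constant on each cell, so no finer description of $m_{n,k}$ is needed. In particular, no uniform-in-$k$ control of $d_{n,k}$ is required: the mass weighting in $L_g^2$ absorbs any large individual increments.
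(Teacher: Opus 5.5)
Your proposal is correct and follows the paper's proof essentially verbatim. You show that the error vanishes on singleton atomic cells, bound $0\le h_{n,k}-m_{n,k}(t)\le h_{n,k}\le\rho_n$ on non-singleton cells, and then sum with the mass weights to get $\rho_n^2\norm{d_n}_{L_g^2(V^*)}^2$, which tends to zero by \eqref{eq:rotheD}. Your added check of \eqref{eq:posteriorcell} merely spells out what the paper asserts just before the lemma: earlier cells contribute $U_{n,k-1}-u_0$ and the current cell contributes $m_{n,k}(t)d_{n,k}$ through $\Pgplus$.
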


\begin{proof}
On a non-singleton cell, $0\le h_{n,k}-m_{n,k}(t)\le h_{n,k}\le\rho_n$.  On a singleton atomic cell the difference is zero.  Hence, using \eqref{eq:posteriorerror},
\begin{align*}
 \norm{\Sg u_n-\bar U_n}_{L_g^2(V^*)}^2
 &\le\rho_n^2\sum_{k=1}^{K_n}h_{n,k}\norm{d_{n,k}}_{V^*}^2\\
 &=\rho_n^2\norm{d_n}_{L_g^2(V^*)}^2.
\end{align*}
The derivative norm is uniformly bounded by \eqref{eq:rotheD} and $\rho_n\to0$.
\end{proof}

\subsection{Cell averages}
For a separable Banach space $Y$ and $\Phi\in L_g^1(I;Y)$ define
\[
 (Q_n\Phi)|_{C_{n,k}}
 :=\frac1{h_{n,k}}\int_{C_{n,k}}\Phi(t)\,d\mug(t),
\]
and set $Q_n\Phi=0$ on $Z_n$.

\begin{lemma}[Strong convergence of cell averages]\label{lem:Qn}
For every separable Banach space $Y$, every $1\le p<\infty$, and every $\Phi\in L_g^p(I;Y)$,
\begin{equation}\label{eq:Qn}
 Q_n\Phi\to\Phi\quad\text{strongly in }L_g^p(I;Y).
\end{equation}
Moreover $\norm{Q_n\Phi}_{L_g^p(Y)}\le\norm\Phi_{L_g^p(Y)}$.
\end{lemma}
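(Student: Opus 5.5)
The plan is the standard contraction-plus-density argument. First I prove the uniform bound $\norm{Q_n\Phi}_{L_g^p(Y)}\le\norm\Phi_{L_g^p(Y)}$. Then I prove convergence for a dense class of $\Phi$. Finally, an $\varepsilon/3$ argument extends convergence to all of $L_g^p(I;Y)$.

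\emph{Contraction.} On a cell $C_{n,k}$, Jensen's inequality for the Bochner average gives
\[
 \norm{(Q_n\Phi)|_{C_{n,k}}}_Y^p\le\frac1{h_{n,k}}\int_{C_{n,k}}\norm{\Phi}_Y^p\,d\mug .
\]
This follows from the triangle inequality for Bochner integrals and convexity of $s\mapsto s^p$. Multiplying by $h_{n,k}$ and summing over $k$ gives the bound. Since $Z_n$ is null, it contributes nothing. Hence $\norm{Q_n}_{\mathcal L(L_g^p(Y))}\le1$ uniformly in $n$.

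\emph{Dense class.} I will use simple functions $\sum_i y_i\mathbf 1_{E_i}$ with $E_i$ Borel. By linearity of $Q_n$ and the uniform bound, it suffices to treat $\Phi=y\mathbf 1_E$, and in fact scalar indicators $\mathbf 1_E$. By regularity of the finite Borel measure $\mug$, $\mathbf 1_E$ can be approximated in $L_g^p$ by finite unions of half-open intervals $[c,d)$. The contraction bound again lets me reduce to a single interval $\Phi=\mathbf 1_{[c,d)}$.

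On each cell, $Q_n\mathbf 1_{[c,d)}$ equals $\mathbf 1_{[c,d)}$ exactly unless the cell straddles $c$ or $d$. On a straddling cell the error is at most $1$ pointwise. The total error is therefore at most
\[
 \sum_{k\ \text{straddling}}\mug(C_{n,k}) .
\]

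\emph{Main obstacle: the straddling cells at the endpoint $c$.} The cells are Borel sets, not necessarily intervals, since the construction of Lemma~\ref{lem:partitions} closes pieces at residual atoms. Still, each cell lies in one residual interval between consecutive mandatory cuts, so it has Euclidean diameter tending to zero by Lemma~\ref{lem:partitions}(iv).

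I split the straddling cells into two kinds.
\begin{enumerate}[label=(\roman*)]
\item If the straddling cell is a singleton, it cannot straddle, so it contributes nothing.
\item Otherwise the cell is non-singleton, contained in $(c-\eta_n,c+\eta_n)$ with $\eta_n\to0$, and has mass at most $\rho_n$.
\end{enumerate}
The real danger is an atom sitting at $c$ or nearby inside a non-singleton cell.

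Two facts control this. If $c$ is an atom, Lemma~\ref{lem:partitions}(v) makes $\{c\}$ a singleton cell for large $n$. Any other atoms in the small window have total mass that tends to zero. So the mass of the window minus $\{c\}$ satisfies
\[
 \mug\bigl((c-\eta_n,c+\eta_n)\setminus\{c\}\bigr)\to0
\]
by continuity of the measure. The same argument applies at $d$.

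At most a bounded number of straddling cells occur per endpoint: cells are ordered, and an endpoint lies in only one of them, apart from null ambiguity. Hence the error tends to zero.

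\emph{Vector-valued case and conclusion.} For general $Y$, I approximate $\Phi$ in $L_g^p(Y)$ by simple functions, using strong measurability and $p<\infty$. The bound
\[
 \norm{Q_n\Phi-\Phi}\le\norm{Q_n(\Phi-\Phi_\varepsilon)}+\norm{Q_n\Phi_\varepsilon-\Phi_\varepsilon}+\norm{\Phi_\varepsilon-\Phi}
\]
then finishes the argument. The first and third terms are at most $\varepsilon$ each by the contraction bound, and the middle term tends to zero by the scalar case.
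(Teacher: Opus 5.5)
Your proof is correct, but it uses a different dense class than the paper. The paper first proves convergence for continuous $\Phi\colon[a,b]\to Y$. On non-singleton cells, uniform continuity together with Lemma~\ref{lem:partitions}(iv) makes $Q_n\Phi-\Phi\to0$ uniformly, and singleton cells are exact. It then invokes density of continuous $Y$-valued functions in $L^p(\mug;Y)$, which rests on regularity of the finite Borel measure $\mug$. In that route atoms play no special role, since a continuous function cannot tell an atom from its neighbours. In substance only the diameter condition (iv) is used.

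You instead work with simple functions reduced to indicators of half-open intervals. This needs only approximation of Borel sets in $\mug$-measure by finite unions of the generating intervals, and only strong measurability of $\Phi$. The price is that $\mathbf 1_{[c,d)}$ is discontinuous exactly at $c$ and $d$, where an atom may sit. With your crude bound by the total straddling mass, you genuinely need (v), the eventual isolation of each fixed atom, plus continuity from above on the punctured window $(c-\eta_n,c+\eta_n)\setminus\{c\}$. For that step, take $\eta_n$ nonincreasing, for instance by replacing it with $\sup_{m\ge n}\eta_m$. So your route is more elementary on the measure-theoretic side and shows explicitly where isolating atoms matters for discontinuous data. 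The paper's route is shorter and uses less of the partition structure.

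One small repair: your claim that only boundedly many cells straddle an endpoint is not justified as stated if the cells are arbitrary Borel sets. It is also unnecessary. For large $n$ the non-singleton straddling cells at $c$ are pairwise disjoint subsets of the punctured window, so their total mass is at most the window's mass, which tends to zero.
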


\begin{proof}
Jensen's inequality on each cell gives the contraction estimate.  To prove convergence, first let $\Phi$ be continuous on the compact interval with values in $Y$.  On a non-singleton cell $C_{n,k}$, uniform continuity gives
\[
 \sup_{s,t\in C_{n,k}}\norm{\Phi(s)-\Phi(t)}_Y\to0
\]
uniformly in $k$ because the cell diameters tend uniformly to zero.  Hence $Q_n\Phi-\Phi\to0$ uniformly on the union of non-singleton cells.  On singleton cells, $Q_n\Phi=\Phi$ exactly at the atom.  Therefore \eqref{eq:Qn} holds for continuous $\Phi$.

For a finite regular Borel measure on a compact metric space and separable $Y$, continuous $Y$-valued functions are dense in the Bochner space $L^p(\mug;Y)$.  Given arbitrary $\Phi$ and $\varepsilon>0$, choose continuous $\Psi$ with $\norm{\Phi-\Psi}_{L^p}<\varepsilon$.  Then
\[
 \norm{Q_n\Phi-\Phi}_{L^p}
 \le\norm{Q_n(\Phi-\Psi)}_{L^p}
 +\norm{Q_n\Psi-\Psi}_{L^p}
 +\norm{\Psi-\Phi}_{L^p}
 \le2\varepsilon+\norm{Q_n\Psi-\Psi}_{L^p}.
\]
For large $n$ the last term is below $\varepsilon$, proving the claim.
\end{proof}

\begin{theorem}[Rothe existence without (P)]\label{thm:rothe}
Assume (A1)--(A3).  For every $u_0\in H$ and $f\in L_g^2(I;V^*)$, there exists $u\in\Wplus$ solving \eqref{eq:problem}.  Along a subsequence,
\begin{equation}\label{eq:rotheweak}
 \bar U_n\rightharpoonup\Sg u\quad\text{in }L_g^2(I;V),
 \qquad
 d_n\rightharpoonup\Dg u\quad\text{in }L_g^2(I;V^*).
\end{equation}
No spatial projection is used.  By Corollary~\ref{cor:unique}, the solution is unique and therefore equals the Galerkin solution of Theorem~\ref{thm:galerkin}.
\end{theorem}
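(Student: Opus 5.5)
The plan mirrors the Galerkin proof, with the Rothe quantities in place of the projected ones. By Lemma~\ref{lem:rothebounds}, $(\bar U_n)$ is bounded in $L_g^2(I;V)$ and $(d_n)$ is bounded in $L_g^2(I;V^*)$. Both spaces are Hilbert spaces, so I extract a subsequence with $\bar U_n\rightharpoonup z$ in $L_g^2(V)$ and $d_n\rightharpoonup d$ in $L_g^2(V^*)$. I then define the candidate
\[
 u(t):=u_0+\int_{[a,t)}d\,d\mug .
\]
This gives $u\in AC_g^{\rm B}(I;V^*)$, $\Dg u=d$ and $u(a)=u_0$.

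Next I identify the posterior state. From \eqref{eq:unprimitive}, $\Sg u_n=u_0+\Pgplus d_n$. Since $\Pgplus$ is bounded on $L_g^2(V^*)$ by Lemma~\ref{lem:Ppm}, it is weak-to-weak continuous, so $\Sg u_n\rightharpoonup u_0+\Pgplus d=\Sg u$ in $L_g^2(V^*)$. Lemma~\ref{lem:consistency} gives $\Sg u_n-\bar U_n\to0$ strongly in $L_g^2(V^*)$. Hence $\bar U_n\rightharpoonup\Sg u$ in $L_g^2(V^*)$, while $\bar U_n\rightharpoonup z$ in $L_g^2(V^*)$ through the embedding $V\hookrightarrow V^*$. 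Uniqueness of weak limits gives $z=\Sg u$. Therefore $\Sg u\in L_g^2(V)$ and $u\in\Wplus$, which proves \eqref{eq:rotheweak}.

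For the equation, the cellwise $V^*$ identity $d_{n,k}+A_{n,k}U_{n,k}=f_{n,k}$ reads $d_n+Q_n(A(\cdot)\bar U_n)=Q_nf$ on $[a,b)\setminus Z_n$. This holds because $\bar U_n$ is constant on each cell, so $A_{n,k}U_{n,k}$ is the cell average of $A(t)U_{n,k}$. I fix $v\in V$ and $\eta\in L_g^\infty(I)$ and pair the identity with $\eta v$. The term in $d_n$ converges by weak convergence. The forcing term converges by Lemma~\ref{lem:Qn}.

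The main obstacle is the operator term, since two weakly convergent sequences cannot be paired directly. I would use that $Q_n$ is self-adjoint with respect to $\mug$: it is a conditional expectation onto cellwise constants, and the $\mug$-null set $Z_n$ is harmless. This gives
\[
 \int\eta\,\ip{Q_n(A\bar U_n)}{v}\,d\mug=\int a\bigl(t;\bar U_n(t),\,(Q_n\eta)(t)\,v\bigr)\,d\mug .
\]
This is most cleanly checked by Fubini on each cell. By Lemma~\ref{lem:Qn} with $Y=\R$, $Q_n\eta\to\eta$ strongly in $L_g^2$, and $\norm{Q_n\eta}_\infty\le\norm\eta_\infty$. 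I split the integral as $\int a(t;\bar U_n,\eta v)+\int a(t;\bar U_n,(Q_n\eta-\eta)v)$. The first piece is a fixed bounded linear functional of $\bar U_n$ on $L_g^2(V)$, as in Lemma~\ref{lem:Ameas}, so it converges to $\int a(t;\Sg u,\eta v)$. The second is at most $M\norm v_V\sup_n\norm{\bar U_n}_{L_g^2(V)}\norm{Q_n\eta-\eta}_{L_g^2}\to0$.

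This yields \eqref{eq:limitintegratedv}. Exactly as in Step 3 of Theorem~\ref{thm:galerkin}, a countable dense subset of $V$ and continuity in $v$ then give a single null set outside which \eqref{eq:variational-pointwise} holds for all $v$. Hence $u$ is a posterior variational solution. No $P_m$ appears anywhere in the argument. Uniqueness, and the identification with the Galerkin solution, follow from Corollary~\ref{cor:unique}.
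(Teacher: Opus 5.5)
Your proposal is correct and follows the paper's proof essentially step for step. You use the same weak extraction, the same reconstruction of $u$ with $z=\Sg u$ identified through $\Pgplus$ and Lemma~\ref{lem:consistency}, and the same cell-average mechanism for the operator term: your self-adjointness identity is exactly the paper's device of testing each cell with the average $Q_n\Phi$, taken here with $\Phi=\eta v$ so that $Q_n\Phi=(Q_n\eta)v$. The only difference is the closing step, where you reuse the countable-dense-set argument from the Galerkin proof, whereas the paper tests with arbitrary $\Phi\in L_g^2(I;V)$ and chooses $\Phi=J_V^{-1}R$ with $R=d+A(\cdot)z-f$ to obtain a single null set directly.
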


\begin{proof}
\emph{Step 1: weak extraction.}
The bounds \eqref{eq:rotheE}--\eqref{eq:rotheD} and reflexivity give a subsequence and functions
\[
 z\in L_g^2(I;V),\qquad d\in L_g^2(I;V^*)
\]
such that
\begin{equation}\label{eq:rotheextract}
 \bar U_n\rightharpoonup z\quad\text{in }L_g^2(V),
 \qquad
 d_n\rightharpoonup d\quad\text{in }L_g^2(V^*).
\end{equation}

\emph{Step 2: reconstruct the exact limit state.}
Set
\[
 u(t):=u_0+\int_{[a,t)}d(r)\,d\mug(r).
\]
Then $u\in AC_g^{\rm B}(I;V^*)$, $u(a)=u_0$, and $\Dg u=d$.  Bounded linearity of $\Pgplus$ gives
\[
 \Sg u_n=u_0+\Pgplus d_n
 \rightharpoonup u_0+\Pgplus d=\Sg u
 \quad\text{in }L_g^2(V^*).
\]
By Lemma~\ref{lem:consistency},
\[
 \bar U_n-\Sg u_n\to0
 \quad\text{strongly in }L_g^2(V^*).
\]
The first convergence in \eqref{eq:rotheextract}, followed by the continuous embedding $V\hookrightarrow V^*$, also gives $\bar U_n\rightharpoonup z$ in $L_g^2(V^*)$.  Hence $z=\Sg u$ in $L_g^2(V^*)$.  Since $z\in L_g^2(V)$, the posterior state actually belongs to $L_g^2(V)$ and $u\in\Wplus$.

\emph{Step 3: write the discrete variational identity in integral form.}
Let $\Phi\in L_g^2(I;V)$.  On each cell use the fixed vector $(Q_n\Phi)|_{C_{n,k}}$ as the test vector in \eqref{eq:rothestep}, multiply by $h_{n,k}$, and sum in $k$.  By the definitions of the averages,
\begin{equation}\label{eq:rotheintegrated}
 \int_I\ip{d_n}{Q_n\Phi}\,d\mug
 +\int_I a(t;\bar U_n,Q_n\Phi)\,d\mug
 =\int_I\ip{Q_nf}{Q_n\Phi}\,d\mug.
\end{equation}

\emph{Step 4: pass to the limit term by term.}
By Lemma~\ref{lem:Qn}, $Q_n\Phi\to\Phi$ strongly in $L_g^2(V)$.  Therefore
\begin{align*}
 \int\ip{d_n}{Q_n\Phi}-\int\ip d\Phi
 &=\int\ip{d_n-d}{\Phi}
   +\int\ip{d_n}{Q_n\Phi-\Phi}\longrightarrow0,
\end{align*}
because the first term uses weak convergence of $d_n$ and the second uses the uniform derivative bound and strong convergence of the tests.

For the form term, write
\begin{align*}
 &\int a(t;\bar U_n,Q_n\Phi)-\int a(t;z,\Phi)\\
 &=\int a(t;\bar U_n-z,\Phi)
   +\int a(t;\bar U_n,Q_n\Phi-\Phi).
\end{align*}
The first term tends to zero by weak convergence in $L_g^2(V)$ and the fact that $w\mapsto\int a(t;w(t),\Phi(t))\,d\mug$ is a bounded linear functional, with norm at most $M\norm\Phi_{L_g^2(V)}$.  The second tends to zero because
\[
 \left|\int a(t;\bar U_n,Q_n\Phi-\Phi)\,d\mug\right|
 \le M\norm{\bar U_n}_{L_g^2(V)}\norm{Q_n\Phi-\Phi}_{L_g^2(V)}.
\]
Finally, $Q_nf\to f$ strongly in $L_g^2(V^*)$ by Lemma~\ref{lem:Qn}, so the right-hand side converges to $\int\ip f\Phi$.  We obtain
\begin{equation}\label{eq:rothelimitweak}
 \int_I\ip d\Phi\,d\mug
 +\int_I a(t;z,\Phi)\,d\mug
 =\int_I\ip f\Phi\,d\mug
 \qquad\forall\Phi\in L_g^2(I;V).
\end{equation}

\emph{Step 5: obtain one $V^*$-valued equality, not merely test-dependent scalar equalities.}
By Lemma~\ref{lem:Ameas},
\[
 R:=d+A(\cdot)z-f\in L_g^2(I;V^*).
\]
Equation \eqref{eq:rothelimitweak} says
\[
 \int_I\ip{R(t)}{\Phi(t)}\,d\mug(t)=0
 \qquad\forall\Phi\in L_g^2(I;V).
\]
Recall that $V$ was assumed to be a Hilbert space in the Gelfand triple \eqref{eq:gelfand}. Hence the Riesz map $J_V:V\to V^*$ is an isometric isomorphism. Choose the admissible test function
\[
 \Phi(t):=J_V^{-1}R(t).
\]
Strong measurability follows from bounded linearity of $J_V^{-1}$.  Then
\[
 0=\int_I\ip{R}{J_V^{-1}R}\,d\mug
 =\int_I\norm{J_V^{-1}R(t)}_V^2\,d\mug(t).
\]
Hence $R(t)=0$ in $V^*$ for almost every $t$.  This produces one exceptional null set independent of the test vector.  Recalling $z=\Sg u$ and $d=\Dg u$, we obtain
\[
 \Dg u+A(t)\Sg u=f
 \quad\text{in }V^*,\qquad\mug\text{-a.e.}
\]
Thus $u$ is a posterior variational solution.

No spatial projection occurred in any step.  Corollary~\ref{cor:unique} therefore identifies the Rothe solution with the unique Galerkin solution, while Proposition~\ref{prop:Pautomatic} guarantees the stable chain used by the latter construction.
\end{proof}

\begin{remark}[Logical independence of the two constructions]
The Galerkin and Rothe proofs solve the same linear problem but close their limits in different ways.  Galerkin uses the automatically available stable projections to control the $V^*$ derivative and then identifies the posterior state through $\Pgplus$.  Rothe uses exact primitive reconstruction plus posterior consistency of the cellwise implicit states.  The Rothe proof is therefore not a disguised application of the Galerkin theorem and, in particular, establishes existence without using any spatial projection; Proposition~\ref{prop:Pautomatic} is invoked only when the projection-based energy and Galerkin arguments are used.
\end{remark}

\section{Why anterior Aubin--Lions compactness is not forced here}\label{sec:aubin}
The public version of Fern\'andez \cite{FernandezAubinLions2026} develops the Stieltjes--Bochner fundamental theorem in Theorem 3.4, defines the anterior evolution graph in Definition 4.1, proves the main compactness theorem in Theorem 4.7, gives the Gelfand-triple specialization in Corollary 5.4, and treats a weighted discrete compactness result in Corollary 5.9.  These exact locators refer to the public PDF of arXiv:2609.10064v1.  The state component in Definition 4.1 satisfies an anterior primitive relation with $[a,t)$.

Our natural energy space is different:
\[
 \Sg u\in L_g^2(I;V),\qquad \Dg u\in L_g^2(I;V^*).
\]
It does not, in general, imply $u\in L_g^2(I;V)$.  The following elementary example makes the obstruction explicit.

\begin{proposition}[Posterior $V$-control does not imply anterior $V$-control]\label{prop:postnotant}
Assume that the embedding $V\hookrightarrow H$ is proper, so that $H\setminus V\ne\varnothing$.  Then there are a Stieltjes clock with one atom and a function $u\in\Wplus$ such that
\[
 \Sg u\in L_g^2(I;V),\qquad \Dg u\in L_g^2(I;V^*),
\]
but $u\notin L_g^2(I;V)$.
\end{proposition}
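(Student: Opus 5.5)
Choose $h\in H\setminus V$ and work with a clock that has a single atom at the left endpoint and no other mass. Concretely, take $I=[0,1]$ and
\[
 g(t)=\begin{cases}0,&t=0,\\ 1,&t>0,\end{cases}
\]
which is nondecreasing and left-continuous. Then $\mug=\delta_0$, with $\Ag=\{0\}$ and $\dg(0)=1$. The measure space $([0,1),\mug)$ is a single point of mass one. Every $L_g^2(I;X)$ class is therefore determined by its value at $t=0$, and $\norm{w}_{L_g^2(X)}=\norm{w(0)}_X$.

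Next, define the candidate. Fix any $w\in V$, for instance $w=0$, and set $u_0:=h\in H\subset V^*$ and $v:=w-h\in V^*$. Put
\[
 u(t):=u_0+\int_{[0,t)}v\,d\mug,
\]
so $u(0)=h$ and $u(t)=w$ for $t>0$. By Definition~\ref{def:ACB} we have $u\in AC_g^{\rm B}(I;V^*)$ with $\Dg u=v$. Since $\mug$ is finite, $\Dg u\in L_g^2(I;V^*)$. By \eqref{eq:Sg}, $\Sg u(0)=u(0)+\dg(0)\Dg u(0)=h+(w-h)=w\in V$. Hence $\Sg u\in L_g^2(I;V)$, and $u\in\Wplus$. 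This is consistent with Theorem~\ref{thm:LM}: $u(0)=h\in H$, $u(0^+)=w$, and the jump energy is $\tfrac12\norm{w-h}_H^2$.

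The failure of anterior control is the one point that needs care. The class of $u$ in $L_g^2(I;V)$ is determined by the value $u(0)=h$, and because the atom carries positive mass this value cannot be altered on a null set. The statement $u\in L_g^2(I;V)$ would require a strongly measurable $V$-valued representative agreeing with $u$ $\mug$-a.e., which would force $h\in V$. This contradicts the choice of $h$, so $u\notin L_g^2(I;V)$.

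If one prefers an interval with nontrivial continuous part, append Lebesgue time after the atom, for example $g(t)=t+\mathbf 1_{(0,1]}(t)$, and keep $u\equiv w$ after the jump. The same computation applies: $u$ and $\Sg u$ agree off the atom, and only the atom's value $h\notin V$ breaks anterior $V$-integrability. I do not expect any real obstacle beyond checking that the left-continuous representative at an atom is forced, as noted in Step 6 of the proof of Theorem~\ref{thm:LM}.
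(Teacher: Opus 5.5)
Your proof is correct and follows essentially the paper's route: pick $h\in H\setminus V$, use a purely atomic clock whose single atom carries the state from $h$ into $V$, and conclude because the anterior value $h$ at the atom cannot be changed on a $\mug$-null set. The differences are cosmetic: the paper puts the atom at an interior point $r$ with mass $\delta$ and jumps to $0$, while you put it at the left endpoint (allowed, since $\Ag\subset[a,b)$ may contain $a$) with mass $1$ and jump to an arbitrary $w\in V$, and your choice $I=[0,1]$ adapts verbatim to any $[a,b]$.
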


\begin{proof}
Choose $h\in H\setminus V$ and regard $h$ as an element of $V^*$ through the canonical embedding.  Fix $r\in(a,b)$ and $\delta>0$, and choose a left-continuous nondecreasing clock whose measure is the single atom
\[
 \mug=\delta\delta_r.
\]
For instance, after an irrelevant additive constant, take $g(t)=0$ for $t\le r$ and $g(t)=\delta$ for $t>r$.  Define the Bochner density by
\[
 d(r):=-\frac{h}{\delta}\in V^*,
\]
with arbitrary values away from $r$, and set
\[
 u(t):=h+\int_{[a,t)}d(s)\,d\mug(s).
\]
Then $u(a)=h$, $\Dg u=d$, and, because the integral over $[a,r)$ does not include the atom,
\[
 u(r)=h.
\]
The posterior value is
\[
 \Sg u(r)=u(r)+\delta d(r)=h-h=0\in V.
\]
Since the measure is supported only at $r$,
\[
 \norm{\Sg u}_{L_g^2(V)}^2
 =\delta\norm 0_V^2=0,
\]
while
\[
 \norm{\Dg u}_{L_g^2(V^*)}^2
 =\delta\norm{-\frac h\delta}_{V^*}^2
 =\delta^{-1}\norm h_{V^*}^2<\infty.
\]
Thus $u\in\Wplus$.  If $u$ belonged to $L_g^2(I;V)$, then, because the singleton $\{r\}$ has positive measure, its value $u(r)=h$ would have to belong to $V$, contradicting the choice of $h$.  Hence anterior and posterior energy control are genuinely different at atoms.
\end{proof}

Nor can one simply apply the anterior graph theorem to the function $\Sg u$.  If $q=\Dg u$, then
\[
 \Pgplus q(t)-\Pgminus q(t)=\dg(t)q(t),
\]
which can be nonzero only at atoms. Thus the automatically available pair $(\Sg u,q)$ does not satisfy the anterior graph relation of Definition 4.1 in \cite{FernandezAubinLions2026} in general.

This distinction can be seen directly in both approximation procedures.  In Galerkin we know
\[
 \Sg u_m\ \text{bounded in }L_g^2(V),
 \qquad
 \Dg u_m\ \text{bounded in }L_g^2(V^*).
\]
The identity $u_m=\Sg u_m-\dg\Dg u_m$ only controls the second term in $V^*$; it does not produce a uniform $V$-bound for $u_m$.  In Rothe, the cellwise implicit state $\bar U_n$ is uniformly bounded in $L_g^2(V)$, but $(\bar U_n,d_n)$ is not an exact anterior evolution-graph pair.  The exact graph state is $u_n$ from \eqref{eq:unprimitive}, and it is only the posterior state $\Sg u_n$ that approaches $\bar U_n$, in $V^*$, by Lemma~\ref{lem:consistency}.

Therefore Theorem 4.7 or Corollary 5.4 of \cite{FernandezAubinLions2026} should not be inserted into the present proof without an additional hypothesis giving anterior $V$-control.  This causes no difficulty in the linear coercive problem: linearity, weak compactness, and boundedness of $\Pgplus$ suffice to identify the limits.  A posterior-state Aubin--Lions theorem may become important for nonlinear problems, where weak convergence alone may no longer pass through the nonlinear terms.

\section{Comparison with the anterior formulation of 2020}
The anterior theory of \cite{FernandezTojo2020} works mode by mode. For one scalar mode and an atom of mass $\delta$,
\[
 u^+-u=\delta(h-\lambda u),\qquad
 u^+=(1-\lambda\delta)u+\delta h.
\]
Thus $\lambda\delta=1$ does not prevent forward propagation: it resets the outgoing value to $\delta h$. What fails there is invertibility/regressivity of the step and any representation that divides by $1-\lambda\delta$. Theorem~4.2 of arXiv:2001.11183v2 proves the parabolic result under hypotheses (H1)--(H5) \cite[Section~4, Theorems~4.1--4.2]{FernandezTojo2020}.

For the posterior equation the same atomic identity gives
\[
 u^+-u+\delta\lambda u^+=\delta h,
 \qquad
 u^+=\frac{u+\delta h}{1+\lambda\delta}.
\]
Hence the homogeneous factors are $1-\lambda\delta$ and $(1+\lambda\delta)^{-1}$: explicit versus implicit resolvent algebra. Related shifted-state equations on time scales are studied by Braverman and Karpuz \cite{BravermanKarpuz2012}; we use that reference only as contextual prior art, not as support for any proof here.

The distinction is sharper for an unbounded parabolic operator. Let $A_H$ be an autonomous positive selfadjoint unbounded realization on $H$. For any $\delta>0$, $E_\delta=I-\delta A_H:D(A_H)\to H$ cannot extend to a bounded operator on $H$: otherwise $A_H=\delta^{-1}(I-E_\delta)$ would be bounded. In the posterior Gelfand setting, by contrast, the jump is the Lax--Milgram problem associated with $(w,v)_H+\delta a(r;w,v)$, which is coercive for every $\delta>0$. This diagnostic is independent of symmetry. For the posterior problem, the homogeneous resolvent is $H$-contractive by coercivity even when the form is nonsymmetric; a general forcing term is instead measured in $V^*$.

The local Hilbert-space energy algebra points in the same direction. If $d=u^+-u$, then
\[
 2(d,u)_H=\|u^+\|_H^2-\|u\|_H^2-\|d\|_H^2,
 \qquad
 2(d,u^+)_H=\|u^+\|_H^2-\|u\|_H^2+\|d\|_H^2.
\]
Thus incoming-state testing places the quadratic jump term on the unfavorable side, whereas posterior testing yields the positive jump correction in Theorem~\ref{thm:LM}. This is a local algebraic comparison, not a complete anterior energy theorem.

None of this says that every anterior problem with atoms is ill posed. Bounded operators, stronger data, prior continuous smoothing, or specially structured clocks can support restricted anterior theories. Their classification is outside the scope of this paper. Our claims concern the posterior resolvent, the exact posterior Stieltjes--Lions--Magenes identity, and the two posterior existence constructions.

The distinction should also be read against the older operator-valued-measure literature. Ahmed's Section~3 in \cite{Ahmed2007DCDIS} treats linear parabolic systems on a Gelfand triple with an operator-valued measure and Galerkin constructions. Thus we do not claim priority for measure-driven parabolic weak solutions or Galerkin approximation as such.

\section{Model application: the heat equation}\label{sec:heat}
Let $\Omega\subset\R^d$ be a bounded domain.  We work with
\[
 V=H_0^1(\Omega),\qquad H=L^2(\Omega),\qquad V^*=H^{-1}(\Omega),
\]
and the energy norm $\norm v_V:=\norm{\nabla v}_{L^2}$.  Let
\[
 a(u,v):=\int_\Omega\nabla u\cdot\nabla v\,dx.
\]
Then
\[
 |a(u,v)|\le\norm u_V\norm v_V,
 \qquad
 a(v,v)=\norm v_V^2,
\]
so (A1)--(A3) hold with $M=\alpha=1$.  The posterior Stieltjes heat equation is
\begin{equation}\label{eq:heat}
 \Dg u-\Delta\Sg u=f,
 \qquad \Sg u\in L_g^2(I;H_0^1(\Omega)),
 \qquad u(a)=u_0,
\end{equation}
in variational form. Thus the homogeneous Dirichlet condition is imposed on the posterior state; the anterior value at an atom is not required to belong to $H_0^1(\Omega)$.

To verify (P) transparently, assume the standard Dirichlet spectral realization admits an $L^2$-orthonormal eigenbasis $(e_j)$ with $e_j\in H_0^1(\Omega)$ and
\[
 \int_\Omega\nabla e_j\cdot\nabla v\,dx
 =\lambda_j(e_j,v)_{L^2}
 \qquad\forall v\in H_0^1(\Omega),
\]
where $0<\lambda_1\le\lambda_2\le\cdots$ and the span is dense in $H_0^1(\Omega)$ for the energy norm.  These spectral properties are imposed here as part of the model example. For instance, they follow from the standard compact-resolvent realization of the Dirichlet Laplacian whenever the embedding $H_0^1(\Omega)\hookrightarrow L^2(\Omega)$ is compact. Stating the spectral hypothesis explicitly avoids importing a domain-regularity theorem that is irrelevant to the abstract Stieltjes argument.

Set $V_m=\operatorname{span}\{e_1,\ldots,e_m\}$ and let $P_m$ be the $L^2$-orthogonal projection.  If
\[
 v=\sum_{j\ge1}(v,e_j)_{L^2}e_j
\]
in the energy space, then testing the eigenvalue relation shows
\[
 \norm v_V^2
 =\sum_{j\ge1}\lambda_j|(v,e_j)_{L^2}|^2.
\]
Therefore
\begin{align*}
 \norm{P_mv}_V^2
 &=\sum_{j=1}^m\lambda_j|(v,e_j)_{L^2}|^2\\
 &\le\sum_{j\ge1}\lambda_j|(v,e_j)_{L^2}|^2
 =\norm v_V^2.
\end{align*}
Hence (P) holds with $C_P=1$.

At an atom $r$ of mass $\delta$, Proposition~\ref{prop:atom} becomes
\begin{equation}\label{eq:heatatom}
 (u(r^+),v)_{L^2}
 +\delta\int_\Omega\nabla u(r^+)\cdot\nabla v\,dx
 =(u(r),v)_{L^2}+\delta\ip{f(r)}v
\end{equation}
for every $v\in H_0^1(\Omega)$.  Thus each jump is exactly one coercive elliptic resolvent step with step size equal to the atomic mass of the clock.

\begin{corollary}[Posterior Stieltjes heat equation]\label{cor:heat}
Under the spectral assumption just stated, for every $u_0\in L^2(\Omega)$ and $f\in L_g^2(I;H^{-1}(\Omega))$, equation \eqref{eq:heat} has a unique posterior variational solution $u\in\Wplus$.  It satisfies the exact energy identity, including the atomic jump dissipation, and it can be constructed both by the spectral Galerkin method and by the Stieltjes-adapted Rothe scheme.
\end{corollary}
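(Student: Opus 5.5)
The plan is to read Corollary~\ref{cor:heat} as a specialization of the abstract theory to $V=H_0^1(\Omega)$, $H=L^2(\Omega)$, $V^*=H^{-1}(\Omega)$ with the Dirichlet form $a(u,v)=\int_\Omega\nabla u\cdot\nabla v\,dx$. The first step is to confirm the Gelfand-triple hypotheses of \eqref{eq:gelfand}.

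\begin{itemize}
\item $H_0^1(\Omega)$ with the energy norm and $L^2(\Omega)$ are separable Hilbert spaces.
\item The embedding is continuous by Poincar\'e's inequality on the bounded domain $\Omega$.
\item The embedding is dense because $C_c^\infty(\Omega)$ is dense in $L^2(\Omega)$.
\end{itemize}

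The form is independent of $t$, so (A1) holds trivially, and (A2)--(A3) hold with $M=\alpha=1$, as computed just before the statement.

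With these hypotheses in place, existence and uniqueness of a posterior variational solution follow from Theorem~\ref{thm:galerkin}. Alternatively, existence follows from Theorem~\ref{thm:rothe} together with Corollary~\ref{cor:unique}. The exact energy identity, including the atomic jump term $\frac12\sum_r\norm{u(r^+)-u(r)}_{L^2}^2$, is then Theorem~\ref{thm:energy}. That theorem requires the $H$-valued traces of Theorem~\ref{thm:LM}, which in turn use the stable chain of Proposition~\ref{prop:Pautomatic}. This chain exists here automatically, so Theorem~\ref{thm:LM} applies without extra hypotheses.

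The Rothe construction uses no projection and applies verbatim. Its averaged forms $a_{n,k}$ coincide with $a$ because the form is autonomous.

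The one point needing care is the claim that the \emph{spectral} Galerkin method works. Theorem~\ref{thm:galerkin} was stated for the chain of Proposition~\ref{prop:Pautomatic}, whereas here we want $V_m=\operatorname{span}\{e_1,\dots,e_m\}$. I would check that its proof used only two facts about the chain: nestedness with dense union, and the bound \eqref{eq:P}, via Lemma~\ref{lem:Pstrong} and the derivative estimate in Lemma~\ref{lem:galbounds}. The spectral chain has both properties.

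\begin{itemize}
\item It is nested, and its union is dense in the energy norm by the spectral assumption.
\item It satisfies \eqref{eq:P} with $C_P=1$, by the computation $\norm{P_mv}_V^2=\sum_{j\le m}\lambda_j|(v,e_j)|^2\le\norm v_V^2$ shown before the statement.
\end{itemize}

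Two remaining ingredients do not depend on the chain at all. Lemma~\ref{lem:atomicmatrix} and Lemma~\ref{lem:finite} hold for any finite-dimensional $V_m$. The identification of the limit only uses Lemma~\ref{lem:Ppm}. Hence the spectral Galerkin sequence converges in the sense of \eqref{eq:galweakconv} to a solution, and by Corollary~\ref{cor:unique} this is the same unique $u$.

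The main (mild) obstacle is the Parseval-type identity $\norm v_V^2=\sum_j\lambda_j|(v,e_j)|^2$. It needs the $\lambda_j^{-1/2}e_j$ to form a $V$-orthonormal basis. Orthogonality of this family in $V$ follows from the eigenrelation tested with $v=e_k$. Completeness in $V$ is exactly the assumed density of the span in the energy norm. Expanding $v$ in this basis gives the identity.

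Finally, the atomic law \eqref{eq:heatatom} is Proposition~\ref{prop:atom} with this choice of $a$, and it completes the description of the solution.
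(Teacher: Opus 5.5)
Your proposal is correct and follows the paper's own proof. You check (A1)--(A3) for the Dirichlet form, verify (P) for the spectral chain with $C_P=1$ via $\norm{P_mv}_V^2=\sum_{j\le m}\lambda_j|(v,e_j)_{L^2}|^2\le\norm v_V^2$, and then invoke Theorems~\ref{thm:galerkin}, \ref{thm:rothe} and \ref{thm:energy} together with Proposition~\ref{prop:atom}. Your further check, that the proof of Theorem~\ref{thm:galerkin} uses only nestedness, dense union and the bound \eqref{eq:P} and so applies to the spectral chain and not just to the chain of Proposition~\ref{prop:Pautomatic}, makes explicit a step the paper passes over silently.
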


\begin{proof}
The form $a$ satisfies (A1)--(A3), and the preceding calculation proves (P).  The assertion therefore follows from Theorems~\ref{thm:galerkin}, \ref{thm:rothe}, and \ref{thm:energy}.  Formula \eqref{eq:heatatom} is the specialization of the atomic resolvent relation.
\end{proof}

\section{Main theorem and concluding remarks}
\begin{theorem}[Linear posterior Stieltjes evolution]\label{thm:main}
Assume (A1)--(A3). For each $u_0\in H$ and $f\in L_g^2(I;V^*)$, there exists a unique posterior variational solution $u\in\Wplus$ of \eqref{eq:problem}. It satisfies the exact energy identity \eqref{eq:energy}, has $H$-valued lateral traces, is obtained by the Rothe scheme, and is also the weak limit of a Galerkin approximation built on the stable chain supplied by Proposition~\ref{prop:Pautomatic}.
\end{theorem}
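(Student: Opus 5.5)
The plan is to assemble Theorem~\ref{thm:main} from the results already established, checking that their hypotheses are met under (A1)--(A3) alone. No new estimate is needed.

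\emph{Existence and the Rothe construction.} Fix $u_0\in H$ and $f\in L_g^2(I;V^*)$. Theorem~\ref{thm:rothe} uses only (A1)--(A3), the mass--atom partitions of Lemma~\ref{lem:partitions}, and the Lax--Milgram steps of Lemma~\ref{lem:rothestep}. It produces $u\in\Wplus$ with $u(a)=u_0$ solving \eqref{eq:problem} in the posterior variational sense, with a single exceptional null set. Along a subsequence, $\bar U_n\rightharpoonup\Sg u$ and $d_n\rightharpoonup\Dg u$. This gives existence and the statement that $u$ ``is obtained by the Rothe scheme''.

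\emph{Uniqueness.} Corollary~\ref{cor:unique} applies to any two posterior variational solutions with the same data. The right-hand side of \eqref{eq:continuousdependence} then vanishes, so $u_1=u_2$ at every $t\in I$. Its proof rests on Theorem~\ref{thm:energy}, hence on Theorem~\ref{thm:LM}. That theorem uses a stable projection chain, which Proposition~\ref{prop:Pautomatic} provides unconditionally in the separable Hilbert setting. So uniqueness needs no extra hypothesis. The same uniqueness shows that the Rothe solution coincides with the Galerkin solution of Theorem~\ref{thm:galerkin}. Theorem~\ref{thm:galerkin} is built on the chain from Proposition~\ref{prop:Pautomatic} with $C_P\le\sqrt2$, and gives weak convergence \eqref{eq:galweakconv} of the full Galerkin sequence to this $u$. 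This is the Galerkin clause of the theorem.

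\emph{Traces and energy.} Since $u\in\Wplus$ and $u(a)=u_0\in H$, Theorem~\ref{thm:LM} gives three facts:
\begin{itemize}
\item $u(t)\in H$ for every $t\in I$;
\item $u$ is left-continuous into $H$;
\item right traces $u(r^+)=\Sg u(r)$ exist in $H$ for every $r<b$.
\end{itemize}
Theorem~\ref{thm:energy} then yields the exact identity \eqref{eq:energy}, including the jump term $\tfrac12\sum_r\norm{u(r^+)-u(r)}_H^2$. At atoms, Proposition~\ref{prop:atom} records the resolvent form \eqref{eq:atom}.

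The only delicate point is logical, not analytic. The statement claims that no hypothesis beyond (A1)--(A3) is needed, yet both the energy identity and uniqueness pass through Theorem~\ref{thm:LM}, which requires (P). I will therefore state explicitly that (P) is supplied by Proposition~\ref{prop:Pautomatic} and is not an assumption. I will also note that the chain used in Step~1 of Theorem~\ref{thm:LM} and in Section~\ref{sec:galerkin} is that same chain. With that remark, the proof reduces to citing these results in the order above.
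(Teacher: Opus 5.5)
Your proposal is correct and follows the paper's own proof, which is exactly this assembly: Theorem~\ref{thm:rothe} for existence, Proposition~\ref{prop:Pautomatic} for the stable chain, Theorems~\ref{thm:LM} and \ref{thm:energy} for the $H$-valued traces and the exact energy identity, Corollary~\ref{cor:unique} for uniqueness, and Theorem~\ref{thm:galerkin} for the Galerkin clause. One small correction to your caveat: Theorem~\ref{thm:LM} does not list (P) as a hypothesis but selects the chain of Proposition~\ref{prop:Pautomatic} inside its own proof (Step~1), so your resolution of that point is the same as the paper's.
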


\begin{proof}
Existence by Rothe is Theorem~\ref{thm:rothe}. Proposition~\ref{prop:Pautomatic} supplies a stable Galerkin chain in every Gelfand triple considered here; Theorem~\ref{thm:LM} then gives the $H$ traces and the exact energy identity, Corollary~\ref{cor:unique} gives uniqueness, and Theorem~\ref{thm:galerkin} gives the Galerkin construction.
\end{proof}

One question is left deliberately open: whether there is a natural posterior-state Aubin--Lions graph whose state component is $\Sg u$ rather than $u$. Such a theorem is likely to matter for nonlinear extensions but is unnecessary for the linear coercive problem treated here. The stable chain needed by the Galerkin and energy arguments is automatically available by Proposition~\ref{prop:Pautomatic}; no claim is made that an arbitrary preassigned Galerkin chain must be $V$-stable.

\end{document}